\DeclareMathOperator{\diam}{diam}
\DeclareMathOperator{\dist}{dist}
\DeclareMathOperator{\supp}{supp}
\begin{document}
\newcommand{\done}[2]{\dfrac{d {#1}}{d {#2}}}
\newcommand{\donet}[2]{\frac{d {#1}}{d {#2}}}
\newcommand{\pdone}[2]{\dfrac{\partial {#1}}{\partial {#2}}}
\newcommand{\pdonet}[2]{\frac{\partial {#1}}{\partial {#2}}}
\newcommand{\pdonetext}[2]{\partial {#1}/\partial {#2}}
\newcommand{\pdtwo}[2]{\dfrac{\partial^2 {#1}}{\partial {#2}^2}}
\newcommand{\pdtwot}[2]{\frac{\partial^2 {#1}}{\partial {#2}^2}}
\newcommand{\pdtwomix}[3]{\dfrac{\partial^2 {#1}}{\partial {#2}\partial {#3}}}
\newcommand{\pdtwomixt}[3]{\frac{\partial^2 {#1}}{\partial {#2}\partial {#3}}}
\newcommand{\bs}[1]{\mathbf{#1}}
\newcommand{\bx}{\mathbf{x}}
\newcommand{\by}{\mathbf{y}}
\newcommand{\bd}{\mathbf{d}} 
\newcommand{\bn}{\mathbf{n}} 
\newcommand{\bP}{\mathbf{P}} 
\newcommand{\bp}{\mathbf{p}} 
\newcommand{\ol}[1]{\overline{#1}}
\newcommand{\rf}[1]{(\ref{#1})}
\newcommand{\xt}{\mathbf{x},t}
\newcommand{\hs}[1]{\hspace{#1mm}}
\newcommand{\vs}[1]{\vspace{#1mm}}
\newcommand{\eps}{\varepsilon}
\newcommand{\ord}[1]{\mathcal{O}\left(#1\right)} 
\newcommand{\oord}[1]{o\left(#1\right)}
\newcommand{\Ord}[1]{\Theta\left(#1\right)}
\newcommand{\PhiF}{\Phi_{\rm freq}}
\newcommand{\real}[1]{{\rm Re}\left[#1\right]} 
\newcommand{\im}[1]{{\rm Im}\left[#1\right]}
\newcommand{\hsnorm}[1]{||#1||_{H^{s}(\bs{R})}}
\newcommand{\hnorm}[1]{||#1||_{\tilde{H}^{-1/2}((0,1))}}
\newcommand{\norm}[2]{\left\|#1\right\|_{#2}}
\newcommand{\normt}[2]{\|#1\|_{#2}}
\newcommand{\on}[1]{\Vert{#1} \Vert_{1}}
\newcommand{\tn}[1]{\Vert{#1} \Vert_{2}}
\newcommand{\ts}{\tilde{s}}
\newcommand{\tGamma}{{\tilde{\Gamma}}}
\newcommand{\darg}[1]{\left|{\rm arg}\left[ #1 \right]\right|}
\newcommand{\bnabla}{\boldsymbol{\nabla}}
\newcommand{\dive}{\boldsymbol{\nabla}\cdot}
\newcommand{\curl}{\boldsymbol{\nabla}\times}
\newcommand{\Phixy}{\Phi(\bx,\by)}
\newcommand{\PhiOxy}{\Phi_0(\bx,\by)}
\newcommand{\dxPhixy}{\pdone{\Phi}{n(\bx)}(\bx,\by)}
\newcommand{\dyPhixy}{\pdone{\Phi}{n(\by)}(\bx,\by)}
\newcommand{\dxPhiOxy}{\pdone{\Phi_0}{n(\bx)}(\bx,\by)}
\newcommand{\dyPhiOxy}{\pdone{\Phi_0}{n(\by)}(\bx,\by)}

\newcommand{\rd}{\mathrm{d}}
\newcommand{\R}{\mathbb{R}}
\newcommand{\N}{\mathbb{N}}
\newcommand{\Z}{\mathbb{Z}}
\newcommand{\C}{\mathbb{C}}
\newcommand{\K}{{\mathbb{K}}}
\newcommand{\ri}{{\mathrm{i}}}
\newcommand{\re}{{\mathrm{e}}} 

\newcommand{\cA}{\mathcal{A}}
\newcommand{\cC}{\mathcal{C}}
\newcommand{\cS}{\mathcal{S}}
\newcommand{\cD}{\mathcal{D}}
\newcommand{\cone}{{c_{j}^\pm}}
\newcommand{\ctwo}{{c_{2,j}^\pm}}
\newcommand{\cthree}{{c_{3,j}^\pm}}

\newtheorem{thm}{Theorem}[section]
\newtheorem{lem}[thm]{Lemma}
\newtheorem{defn}[thm]{Definition}
\newtheorem{prop}[thm]{Proposition}
\newtheorem{cor}[thm]{Corollary}
\newtheorem{rem}[thm]{Remark}
\newtheorem{conj}[thm]{Conjecture}
\newtheorem{ass}[thm]{Assumption}
\newcommand{\tk}{\tilde{k}}
\newcommand{\tR}{\tilde{R}}
\newcommand{\bxi}{\boldsymbol{\xi}}
\newcommand{\be}{\mathbf{e}}
\newcommand{\sD}{\mathsf{D}}
\newcommand{\sN}{\mathsf{N}}
\newcommand{\sE}{\mathsf{E}}
\newcommand{\sS}{\mathsf{S}}
\newcommand{\sT}{\mathsf{T}}
\newcommand{\sH}{\mathsf{H}}
\newcommand{\sI}{\mathsf{I}}
\newcommand{\sP}{\mathsf{P}}
\newcommand{\cT}{\mathcal{T}}%
\newcommand{\dudnjump}{\left[ \pdone{u}{\bn}\right]}
\newcommand{\dudnjumptext}{[ \pdonetext{u}{\bn}]}
\newcommand{\sumpm}[1]{\{\!\{#1\}\!\}}
\newcommand{\uM}{M(u)}
\newcommand{\uMp}{M(u')}
\newcommand{\citep}{\cite}
\newcommand{\citet}{\cite}
\title{A frequency-independent boundary element method for scattering by two-dimensional screens and apertures}

\author{%
{\sc
D.\ P.\ Hewett\thanks{Corresponding author. Email: hewett@maths.ox.ac.uk.  Current address: Mathematical Institute, University of Oxford, UK},
S.\ Langdon\thanks{Email: s.langdon@reading.ac.uk} 
and
S.\ N.\ Chandler-Wilde\thanks{Email: s.n.chandler-wilde@reading.ac.uk}} \\[2pt]
Department of Mathematics and Statistics, University of Reading, UK.
}

\maketitle

\begin{abstract}
{
We propose and analyse a hybrid numerical-asymptotic $hp$ boundary element method for time-harmonic scattering of an incident plane wave by an arbitrary collinear array of sound-soft two-dimensional screens.  Our method uses an approximation space enriched with oscillatory basis functions, chosen to capture the high frequency asymptotics of the solution. 
We provide a rigorous frequency-explicit error analysis which proves that the method converges exponentially as the number of degrees of freedom $N$ increases, and that to achieve any desired accuracy it is sufficient to increase $N$ in proportion to the square of the logarithm of the frequency as the frequency increases (standard boundary element methods require $N$ to increase at least linearly with frequency to retain accuracy). Our numerical results suggest that fixed accuracy can in fact be achieved at arbitrarily high frequencies with a frequency-independent computational cost, when the oscillatory integrals required for implementation are computed using Filon quadrature. We also show how our method can be applied to the complementary ``breakwater'' problem of propagation through an aperture in an infinite sound-hard screen.
}
\\[5mm] \textbf{Keywords}:
{high frequency scattering; hybrid numerical-asymptotic boundary element method; diffraction; screen; strip; aperture; breakwater.}
\end{abstract}
\section{Introduction}
\label{Intro}
The problem of time-harmonic scalar wave scattering of an incident plane wave by a two-dimensional (2D) sound-soft screen, and the related problem of scattering by an aperture in an infinite sound-hard screen, are amongst the most widely studied scattering problems.  
They are the simplest canonical problems that exhibit multiple diffraction, yet have applications in acoustics (see, e.g., \cite{To:89,LiWo:05}), electromagnetics (see, e.g., \cite{DavisChew08,VoLy:11}) and water waves (the ``breakwater'' problem, see e.g.\ \cite{BiPoSt:00}, \citet[chapter~4.7]{LiMc:01}).  
In this paper, we propose a numerical method (supported by a complete analysis) that we believe to be the first method of any kind (numerical or analytical) for this problem that is provably effective at all frequencies. Precisely, we prove that increasing the number of degrees of freedom in proportion to the square of the logarithm of the frequency is sufficient to maintain any desired accuracy as the frequency increases. Moreover, our numerical experiments suggest that, in practice, with a fixed number of degrees of freedom the accuracy stays fixed or even improves as frequency increases.

We consider the 2D problem of scattering of a time-harmonic incident plane wave
\[ %
  u^i(\bx) :=\re^{\ri  k \bx\cdot \bd}, \qquad \bx=(x_1,x_2)\in\mathbb{R}^2, %
\] %
where $k>0$ is the wavenumber (proportional to frequency) and $\bd=(d_1,d_2)\in\mathbb{R}^2$ is a unit direction vector, by a sound-soft screen $\Gamma$, a bounded and relatively open non-empty subset of $\Gamma_\infty:=\{ \bx\in\R^2:x_2=0\}$. 
Here the propagation domain is the set $D:=\R^2\setminus \overline\Gamma$, where $\overline \Gamma$ denotes the closure of $\Gamma$. 
We also consider the complementary problem of scattering due to an aperture $\Gamma$ in a sound-hard screen occupying $\Gamma_\infty\setminus\overline{\Gamma}$. In this case the propagation domain is $D':=(\R^2\setminus \Gamma_\infty)\cup\Gamma$.  %
In both cases we assume 
that $\Gamma$ is a union of a finite number of disjoint open intervals, i.e.
\begin{align}
\label{eqn:GammaDef}
\Gamma=\{ (x_1,0)\in\R^2:x_1\in\tGamma\},
\qquad
\tGamma = \bigcup_{j=1}^{n_i} (s_{2j-1},s_{2j}),
\end{align}
where $n_i\geq 1$ is the number of intervals making up $\Gamma$, and \mbox{$0=s_1<s_2<\ldots<s_{2n_i}=L:=\diam{\Gamma}$}. 
(In the case $n_i=1$, $L$ simply represents the length of the screen.) For each $j=1,\ldots,n_i$ we set $\Gamma_j:=(s_{2j-1},s_{2j})\times\{0\}\subset\R^2$ and $L_j:=s_{2j}-s_{2j-1}$.

Our analysis is in the context of Sobolev spaces, the notation and basic definitions for which are set out in~\S\ref{sec:sobolev}. In what follows, let $U^+$ and $U^-$ denote respectively the upper and lower half-planes, i.e., $U^+:= \{\bx\in\R^2:x_2>0\}$ and $U^- := \R^2\setminus \overline{U^+}$, and let $\gamma^\pm$ and $\partial_\bn^\pm$ denote respectively the Dirichlet and Neumann traces from $U^\pm$ onto $\Gamma_\infty$ (defined precisely in~\S\ref{sec:sobolev}).

For the screen scattering problem, the boundary value problem (BVP) to be solved is
\begin{defn}[Problem $\sP$]
Find $u\in C^2\left(D\right)\cap  W^1_{\mathrm{loc}}(D)$ such that
\begin{eqnarray}
 & \Delta u+k^2u  =  0, \quad \mbox{in }D, & \label{eqn:HE1} \\
 & u  = 0, \quad \mbox{on }\Gamma,&\label{eqn:bc1}
\end{eqnarray}
and the scattered field $u^s:=u-u^i$ satisfies the Sommerfeld radiation condition (see, e.g., \citet[(2.9)]{ChGrLaSp:11}). %
By \rf{eqn:bc1} we mean, precisely, that $\gamma^\pm (\chi u)\vert_\Gamma=0$, for every $\chi\in C_0^\infty(\R^2)$ (where for integer $n\geq1$, $C^\infty_{0}(\R^n)$ is the set of those $u \in C^\infty(\R^n)$ that are compactly supported).%
\end{defn}

In Figure~\ref{fig:screen_domain} we plot the total field $u$ for Problem~$\sP$ for a particular scattering configuration (with $\Gamma$ defined precisely in~\S\ref{sec:num}), for two different values of $k$.
\begin{figure}[p]
\begin{center}
\subfigure{\includegraphics[width=11.2cm]{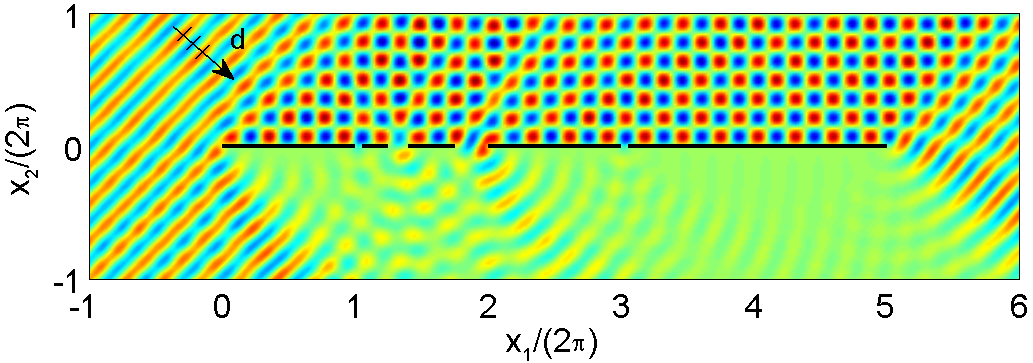}}
\vs{1}
\subfigure{\includegraphics[width=11.2cm]{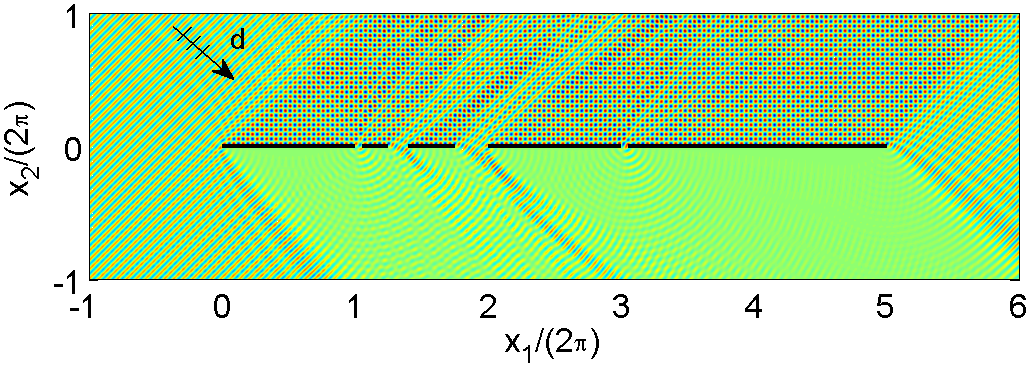}}
\end{center}
\vs{-4}
\caption{Total field $u$, solving Problem~$\sP$, for $\bd=(1/\sqrt{2}, -1/\sqrt{2})$ with $k=5$ (upper) and $k=20$ (lower).}
\label{fig:screen_domain}
\end{figure}

For the aperture scattering problem, the BVP to be solved is (for definiteness we assume in this case that $d_2<0$, so the incident wave arrives from the region $x_2>0$, i.e.\ from above the screen):
\begin{defn}[Problem $\sP'$]
Find $u'\in C^2\left(D'\right)\cap  W^1_{\mathrm{loc}}(D')$ such that
\begin{eqnarray}
 & \Delta u'+k^2u'  =  0, \quad \mbox{in }D', & \label{eqn:HE2} \\
 & \pdonetext{u'}{\bn}  = 0, \quad \mbox{on }\Gamma_\infty\setminus\overline\Gamma,&\label{eqn:bc2}
\end{eqnarray}
and %
\begin{align}
\label{eqn:ud}
u^d(\bx) :=
\begin{cases}
u'(\bx)-(u^i(\bx)+u^r(\bx)), & \bx\in U^+,\\
u'(\bx), & \bx\in U^-,
\end{cases}
\end{align}
satisfies the Sommerfeld radiation condition, where
\[ %
  u^r(\bx) :=\re^{\ri  k \bx\cdot \bd'}, \qquad \bx\in\mathbb{R}^2, %
\qquad \mbox{ with }\bd':=(d_1,-d_2).
\] %
By \rf{eqn:bc2} we mean, precisely, that $\partial_\bn^\pm (\chi u')\vert_{\Gamma_\infty\setminus\overline\Gamma}=0$, for every $\chi\in C_0^\infty(\R^2)$. 
\end{defn}

The solutions to Problems $\sP$ and $\sP'$ are very closely related: as will be made explicit in Theorem~\ref{DirExUn}, once the solution to one problem is known, the solution to the other follows immediately (this is a manifestation of Babinet's principle). 
In Figure~\ref{fig:aperture_domain} we plot the total field $u'$ for the aperture Problem $\sP'$ for the same $\Gamma$, $\bd$, and $k$ as in Figure~\ref{fig:screen_domain}. 
We remark that the field $u^d$ in Problem $\sP'$ can be thought of as a `diffracted' field, being the result of subtracting from the total field $u'$ the incident and reflected plane waves $u^i$ and $u^r$ in the region $x_2>0$. 
\begin{figure}[p]
\begin{center}
\subfigure{\includegraphics[width=11.2cm]{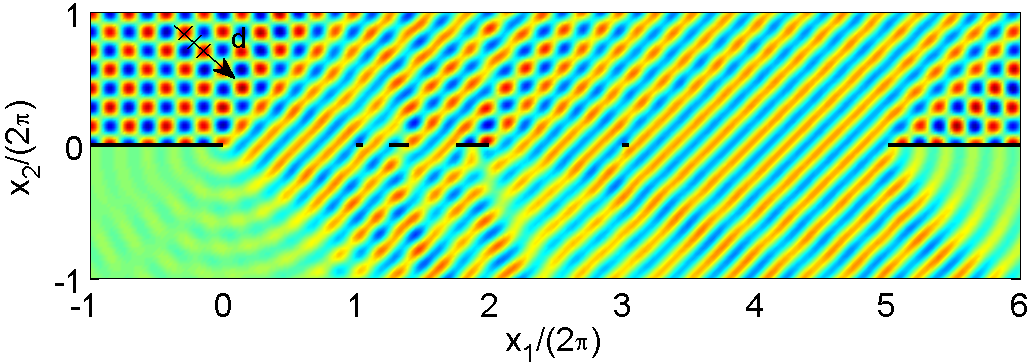}}
\vs{1}
\subfigure{\includegraphics[width=11.2cm]{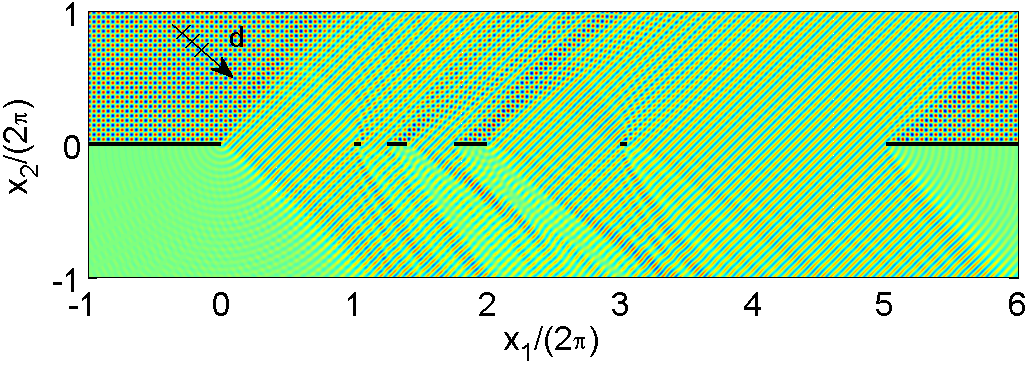}}
\end{center}
\vs{-4}
\caption{Total field $u'$, solving Problem~$\sP'$, for $\bd=(1/\sqrt{2}, -1/\sqrt{2})$ with $k=5$ (upper) and $k=20$ (lower).}
\label{fig:aperture_domain}
\end{figure}

Our approach to solving Problems $\sP$ and $\sP'$ is to reformulate the BVPs as a boundary integral equation (BIE) on $\Gamma$ (see~\S\ref{sec:prob}), which we then solve numerically by a hybrid numerical-asymptotic (HNA) Galerkin boundary element method (BEM). %
The key idea of the HNA approach is to use knowledge of the high frequency asymptotic behaviour of the solution on $\Gamma$ to incorporate appropriate oscillations into the approximation space in such a way that only non-oscillatory functions need to be approximated numerically, so as to achieve a good approximation for a relatively small number of degrees of freedom.  This approach has been successfully applied to a range of scattering problems, e.g.\ scattering by smooth convex 2D obstacles \citep{DGS07}, convex polygons \citep{Convex,HeLaMe:11} and non-convex polygons \citep{NonConvex}, and was the subject of the recent survey paper, \citet{ChGrLaSp:11}. 
We believe that the current paper represents the first application of the HNA methodology, supported by a full numerical analysis, to problems of scattering by screens.

While the numerical method we propose closely resembles that proposed in \citet{HeLaMe:11} for scattering by convex polygons, the numerical analysis for the screen problem is significantly more challenging than that in \citet{HeLaMe:11} and other previous work. The key difference here is that, due to the strong singularity induced by the edge of the screen, the solution to our BIE does not lie in $L^2(\Gamma)$ (as is the case for all previous numerical analyses of HNA methods), and thus we must derive regularity estimates, best approximation estimates, and analyse the BIE (proving continuity and coercivity estimates) all in the context of appropriate fractional Sobolev spaces. This requires new ideas compared to previous work for closed surfaces \citep{HeLaMe:11,NonConvex}.

An outline of the paper is as follows:  we begin in~\S\ref{sec:sobolev} by reviewing details of the Sobolev spaces in which our analysis holds. In~\S\ref{sec:prob} we reformulate Problems $\sP$ and $\sP'$ as a BIE on $\Gamma$, namely a first-kind equation involving the single-layer boundary integral operator $S_k$. We also state $k$-explicit continuity and coercivity estimates for $S_k$ (first stated in~\cite{HeCh:13} and recently proved in~\cite{ScreenCoercivity,ScreenCoercivityPaper} for a much more general class of three-dimensional screens) which are vital for our numerical analysis.  Regularity results for the solution of the BIE are stated in~\S\ref{sec:reg}, where we show how to express the solution as a sum of products of (known) oscillatory functions with (unknown) non-oscillatory amplitudes, for which we have precise regularity estimates. Deriving these estimates requires us to establish a bound on the supremum of $u$ over the whole propagation domain $D$; because of the strong edge singularity this is considerably more complicated than the analogous calculations for convex polygons in \cite{HeLaMe:11}, with separate bounds required close to and away from the screen. 
The results of~\S\ref{sec:reg} are used in~\S\ref{ApproxSpace} to design our $hp$ HNA approximation space, for which we prove rigorous best approximation estimates showing that the number of degrees of freedom required to achieve any prescribed level of accuracy grows only logarithmically with respect to $k$ as $k\rightarrow\infty$.  In~\S\ref{sec:gal} we describe the Galerkin BEM, and derive error estimates for the Galerkin solution, the $k$-dependence of which closely mimics that of the best approximation estimates.  
Numerical results supporting our theory are provided in~\S\ref{sec:num}; these demonstrate that in practice the computational cost required to achieve a fixed accuracy is essentially independent of the wavenumber $k$.  Implementation details and further numerical results (for single scatterers, i.e.\ $n_i=1$) can be found in~\cite{Tw:13}, and related algorithmic ideas for three-dimensional screens can be found in \citet[\S7.6]{ChGrLaSp:11}. 
We remark that we believe that essentially the same numerical method proposed here could be extended to different boundary conditions, e.g.\ Neumann, impedance, as could much of the analysis (in particular the regularity and best approximation results), but we leave this to future work.

Given the wide range of applications of the problem, and the surprising apparent lack of cross-fertilization of ideas in this area between the acoustics, electromagnetics and water waves communities, we feel it beneficial to 
conclude this introductory section with a brief review of alternative analytical, numerical and asymptotic methods proposed in the literature to date.  

In the case $n_i=1$, both Problems $\sP$ and $\sP'$ can be solved via separation of variables in elliptical coordinates, viewing the screen as a degenerate ellipse.  This allows the representation of the solution $u$ as an infinite series of Mathieu functions (see, e.g., \citet[Chapter~4]{Bowman}). However, the series is not straightforward to evaluate in practice, particularly when $k$ is large, even after an application of Watson's transformation (\cite{Ur:68,Se:11,Li:13}). 
There have been many other attempts to derive exact representations for the solution of this and related problems (see, e.g., \cite{To:89a,To:89,Se:11} and in particular the review article~\cite{LiWo:05}), but to the best of our knowledge none are readily computable across the frequency spectrum.

It is also possible to construct an exact solution for the case where the screen consists of an infinite array of identical evenly spaced components (see, e.g., \cite{DaMa:90,AbWi:97}), but, other than for this very specific case, no such formula is known for the case $n_i>1$.  Much effort has gone into the development of embedding schemes that represent the solution for an arbitrary incident angle in terms of the solution to a small number of problems for specific incident angles (see, e.g., \cite{BiPoSt:00,Shanin:03}), but these approaches still require a solution to those specific problems. Thus, in general, both Problems $\sP$ and $\sP'$ must be solved either numerically, or else asymptotically  in the high ($k\rightarrow\infty$) or low ($k\rightarrow0$) frequency limit.

Asymptotic and numerical approaches are usually viewed as being rather complementary. High frequency asymptotic approaches (see, e.g., \cite{Wo:70, GoBeDa:97, HaTh:03})  
are not error-controllable for fixed $k$, but their accuracy improves as $k$ increases.  
By contrast, standard (piecewise polynomial) numerical schemes (see, e.g., \citet{StWe:84,StSu:89}) are error-controllable for fixed $k$, but their computational cost grows at least linearly with respect to $k$ as $k$ increases. (Acceleration techniques such as the fast multipole method may make ``brute-force'' numerical calculations feasible at relatively large $k$, but they do not change the linear growth in computational cost as $k$ increases.) 
This issue is well documented - see, e.g., \citet{ChGrLaSp:11} and the many references therein. 
But the message of the current paper is that, by carefully hybridising the two approaches, one can design numerical methods which perform well across the whole frequency range. 

To the best of our knowledge, the only other numerical scheme for the screen problem that shows anything approaching similar performance (in terms of $k$-dependence) to that achieved here is that proposed by \citet{DavisChew08}.  They exploit the same decomposition as us (motivated by the Geometrical Theory of Diffraction), combined with a coordinate transform to concentrate mesh nodes near strip edges, to derive a numerical solution that is, in their words, ``error controllable and exhibits a bounded error over the full range of frequencies''.  The results given in~\cite{DavisChew08}, for a single strip of widths ranging from half a wavelength up to 1000 wavelengths, are indeed impressive, but the method is not supported by analysis.  We must also mention schemes proposed by \citet{Nye:02} and \citet{ShVa:11}.  A numerical scheme is proposed in \citet[\S11]{Nye:02} for which, by judiciously subtracting and then adding the geometrical optics solution, an improvement in accuracy can be achieved at high frequencies; in this case however the dependence of the accuracy and computational cost on the frequency and discretisation parameters is not made clear.  \citet{ShVa:11} present a novel method (without analysis) that combines a numerical scheme with an asymptotic series; they claim that their scheme is more efficient than a (standard) boundary integral equation method at high frequencies, but the accuracy does appear to deteriorate as frequency increases.

In a recent series of papers \citep{BrLi:12,BrLi:13,LiBr:13}, Bruno and Lintner describe a new framework for problems of scattering by open surfaces, including the sound-soft and sound-hard screen.  They introduce new weighted integral operators, and derive second kind integral equations for both the sound-soft and sound-hard problems (as opposed to the first kind integral equation we solve here).  The enhanced regularity allows the application of high order quadrature rules, and also the use of efficient iterative solvers, leading to significantly reduced computational cost compared to more classical formulations such as that considered here.  In particular, this allows the solution of problems over a wider range of frequencies (in 2D and 3D) than would be possible with more classical approaches, this improvement being particularly noticeable for the sound-hard case.  We note however that, although the algorithms described in \citet{BrLi:12, BrLi:13} are very efficient, the computational cost still grows rapidly as frequency increases, compared to the frequency-independent computational cost that we see in our scheme. 

In future work, it might be of interest to use elements of our approximation space (defined in~\S\ref{ApproxSpace}) with the weighted integral operators proposed by \citet{BrLi:12,LiBr:13}, to see what gains in efficiency might be possible.  But generalising our analysis would require some extra work, because while our best approximation results are independent of the integral equation formulation, the fact that the Galerkin BEM achieves a quasi-optimal approximation (cf.\ \rf{eqn:quasi-opt}) is not. 
\section{Sobolev spaces}
\label{sec:sobolev}
Our analysis is in the context of Sobolev spaces $H^{s} (\Gamma)$ and $\tilde{H}^s(\Gamma)$ for $s\in\R$. We set out here our notation and the basic definitions; for more detail see \citet[\S2]{ScreenCoercivity}.  For $s\in \R$ and integer $n\geq 1$ we define $H^s(\R^n)$ to be the space of those tempered distributions $u$ on $\R^n$ whose Fourier transform satisfies 
\mbox{$\int_{\R^n}(1+|\bxi|^2)^{s}\,|\hat{u}(\bxi)|^2\,\rd \bxi < \infty.$}
Our convention regarding the Fourier transform is that, for $u\in C^\infty_0(\R^n)$,
\mbox{$\hat{u}(\bxi) := (2\pi)^{-n/2}\int_{\R^n}\re^{-\ri \bxi\cdot \bx}u(\bx)\,\rd \bx$}, for $\bxi\in\R^n$. 
In line with many other analyses of high frequency scattering, e.g., \cite{Ih:98}, %
we work with wavenumber-dependent norms. Specifically, we use the norm on $H^s(\R^n)$ defined by
\begin{align}
\norm{u}{H_k^{s}(\R^n)}^2 :=\int_{\R^n}(k^2+|\bxi|^2)^{s}\,|\hat{u}(\bxi)|^2\,\rd \bxi.
\end{align}
We emphasize that $\norm{\cdot}{H^{s}(\R^n)}:= \norm{\cdot}{H_1^{s}(\R^n)}$ is the standard norm on $H^s(\R^n)$, but that, for $k>0$, $\norm{\cdot}{H_k^{s}(\R^n)}$ is another, equivalent, norm on $H^s(\R^n)$. Explicitly,%
\[
  \min\{1,k^s\}\norm{u}{H^{s}(\R^n)} \leq \norm{u}{H^{s}_k(\R^n)} \leq \max\{1,k^s\}  \norm{u}{H^{s}(\R^n)}, \quad \mbox{for }u \in H^s(\R^n).
\]
It is standard that $C_0^\infty(\R^n)$ %
is a dense subset of $H^s(\R^n)$. 
It is also standard (see, e.g.,~\cite{McLean}) that $H^{-s}(\R^n)$ is a natural isometric realisation of $(H^s(\R^n))^*$, the dual space of bounded antilinear functionals on $H^s(\R^n)$, in the sense that the mapping $u\mapsto u^*$  from $H^{-s}(\R^n)$ to $(H^s(\R^n))^*$, defined by
\begin{align}
\label{DualDef}
u^*(v) := \left\langle u, v \right\rangle_{H^{-s}(\R^n)\times H^{s}(\R^n)}:= \int_{\R^n}\hat{u}(\bxi) \overline{\hat{v}(\bxi)}\,\rd \bxi, \quad v\in H^s(\R^n),
\end{align}
is an isometric isomorphism. The duality pairing $\left\langle \cdot, \cdot \right\rangle_{H^{-s}(\R^n)\times H^{s}(\R^n)}$ defined in \rf{DualDef} represents the natural extension of the $L^2(\R^n)$ inner product in the sense that if $u_j, v_j\in L^2(\R^n)$ for each $j$ and $u_j\to u$ and $v_j\to v$ as $j\to\infty$, with respect to the norms on $H^{-s}(\R^n)$ and $H^{s}(\R^n)$ respectively, then $\left\langle u, v \right\rangle_{H^{-s}(\R^n)\times H^{s}(\R^n)}=\lim_{j\to\infty}\left(u_j, v_j \right)_{L^2(\R^n)}$.

We define two Sobolev spaces on $\Omega$ when $\Omega$ is a non-empty open subset of $\R^n$. First, let
$$
H^s(\Omega):=\{U|_\Omega:U\in H^s(\R^n)\},
$$
where $U|_\Omega$ denotes the restriction of the distribution $U$ to $\Omega$ (cf.\ e.g.\ %
\citet[p.~66]{McLean}), with norm%
\begin{align*}
\|u\|_{H_k^{s}(\Omega)}:=\inf_{U\in H^s(\R^n), \,U|_{\Omega}=u}\normt{U}{H_k^{s}(\R^n)}.
\end{align*}
Then $C_{\rm comp}^\infty(\Omega):=\{U|_\Omega:U\in C_0^\infty(\R^n) \}$ is a dense subset of $H^s(\Omega)$. 
Second, let $\tilde{H}^s(\Omega)$ denote the closure of $C_0^\infty(\Omega):=\{U\in C_0^\infty(\R^n): \supp(U)\subset \Omega \}$ in the space $H^s(\R^n)$, equipped with the norm %
$\|\cdot\|_{\tilde H^s_k(\Omega)}$:= $\|\cdot\|_{H^s_k(\R^n)}$. 
When $\Omega$ is sufficiently regular (e.g.\ when $\Omega$ is $C^0$, cf.\ \citet[Thm 3.29]{McLean}) %
we have that $\tilde H^s(\Omega)=\{u\in H^s(\R^n):\supp{u}\subset\overline{\Omega}\}$.

For $s\in \R$ and $\Omega$ any open, non-empty subset of $\R^n$ it holds that
\begin{align}
\label{isdual}
H^{-s}(\Omega)=(\tilde{H}^s(\Omega))^* \; \mbox{ and }\; \tilde H^{s}(\Omega)=(H^{-s}(\Omega))^*,
\end{align}
in the sense that the natural embeddings
$\mathcal{I}:H^{-s}(\Omega)\to(\tilde{H}^s(\Omega))^*$ 
and 
$\mathcal{I}^*:\tilde{H}^{s}(\Omega)\to(H^{-s}(\Omega))^*$,
\begin{align*}
(\mathcal{I} u)(v)
& := \langle u,v \rangle_{H^{-s}(\Omega)\times \tilde{H}^s(\Omega)}:=\langle U,v \rangle_{H^{-s}(\R^n)\times H^s(\R^n)},\\
 (\mathcal{I}^*v)(u)
&:= \langle v,u \rangle_{\tilde H^{s}(\Omega)\times H^{-s}(\Omega)}:=\langle v,U \rangle_{H^{s}(\R^n)\times H^{-s}(\R^n)},
\end{align*}
where $U\in H^{-s}(\R^n)$ is any extension of $u\in H^{-s}(\Omega)$ with $U|_\Omega=u$, are unitary isomorphisms. Also, %
\begin{align}
\label{}
| \langle u,v \rangle_{H^{-s}(\Omega)\times \tilde{H}^s(\Omega)}| =| \langle v,u \rangle_{\tilde{H}^{s}(\Omega)\times H^{-s}(\Omega)}| \leq \norm{u}{H^{-s}_k(\Omega)}\norm{v}{\tilde{H}^{s}_k(\Omega)}, \quad u\in H^{-s}(\Omega), v\in  \tilde{H}^s(\Omega).
\end{align}
We remark that the representations \eqref{isdual} for the dual spaces are well known when $\Omega$ is sufficiently regular. However, it does not appear to be widely appreciated, at least in the numerical analysis for partial differential equations community, that \eqref{isdual} holds without any constraint on the geometry of~$\Omega$. A proof of this general result has been provided recently in \citet[Thm 2.1]{ScreenCoercivity}.

Sobolev spaces can also be defined, for $s\geq 0$, as subspaces of $L^2(\R^n)$ satisfying constraints on weak derivatives. In particular, given a non-empty open subset $\Omega$ of $\R^n$, let
$$
W^1(\Omega) := \{u\in L^2(\Omega): \nabla u \in L^2(\Omega)\},
$$
where $\nabla u$ is the weak gradient. Note that $W^1(\R^n)=H^1(\R^n)$ with
$$
\|u\|^2_{H^1_k(\R^n)} = \int_{\R^n} \left( |\nabla u(\bx)|^2 + k^2 |u(\bx)|^2\right) \rd \bx.
$$
 Further \cite[Theorem 3.30]{McLean}, $W^1(\Omega)= H^1(\Omega)$ whenever $\Omega$ is a Lipschitz open set, in the sense of, e.g., \cite{sauter-schwab11, ChGrLaSp:11}.   It is convenient to define %
$$
W^1_{\mathrm{loc}}(\Omega) := \{u\in L^2_{\mathrm{loc}}(\Omega): \nabla u \in L^2_{\mathrm{loc}}(\Omega)\},
$$
where $L^2_{\mathrm{loc}}(\Omega)$ denotes the set of locally integrable functions $u$ on $\Omega$ for which $\int_G|u(\bx)|^2 \rd \bx < \infty$ for every bounded measurable $G\subset \Omega$.

To define Sobolev spaces on the screen/aperture $\Gamma$ defined by \rf{eqn:GammaDef} we make the natural associations of $\Gamma_\infty$ with $\R$ and of $\Gamma\subset\Gamma_\infty$ with $\tGamma\subset \R$ and set 
$H^s(\Gamma_\infty) := H^s(\R)$, $H^{s}(\Gamma):=H^{s}(\tGamma)$ and $\tilde{H}^{s}(\Gamma):=\tilde{H}^{s}(\tGamma)$, $C^\infty_0(\Gamma_\infty):=C^\infty_0(\R)$, $C^\infty_0(\Gamma):=C^\infty_0(\tGamma)$ etc. 
Recalling that $U^+$ and $U^-$ denote the upper and lower half-planes, respectively, %
we define trace operators  $\gamma^\pm:C_{\mathrm{comp}}^\infty(U^\pm)\to C_{0}^\infty(\Gamma_\infty)$ by $\gamma^\pm u := u|_{\Gamma_\infty}$. It is well known that these extend to bounded linear operators $\gamma^\pm:W^1(U^\pm)\to H^{1/2}(\Gamma_\infty)$. 
Similarly, we define normal derivative operators $\partial_{\bn}^\pm:C_{\mathrm{comp}}^\infty(U^\pm)\to C_{0}^\infty(\Gamma_\infty)$ by 
$\partial_{\bn}^\pm u = \pdonetext{u}{x_2}|_{\Gamma_\infty}$ (so the normal points into $U^+$), which 
extend  (see, e.g.,\ \cite{ChGrLaSp:11}) to bounded linear operators $\partial_\bn^\pm:W^1(U^\pm;\Delta)\to H^{-1/2}(\Gamma_\infty)=(H^{1/2}(\Gamma_\infty))^*$, where $W^1(U^\pm;\Delta):= \{u\in H^1(U^\pm):\Delta u\in L^2(U^\pm)\}$ and $\Delta u$ is the (weak) Laplacian.  Finally, we denote the duality pairing on $H^{1/2}(\Gamma)\times\tilde{H}^{-1/2}(\Gamma)$ by $\langle \cdot,\cdot \rangle_\Gamma$.

\section{Integral equation formulation}
\label{sec:prob}

We now consider the reformulation of the BVPs (Problems $\sP$ and $\sP'$) as integral equations on $\Gamma$; for more detail see \citet[\S3 and \S8]{ScreenCoercivity}. We define the single-layer potential $\cS_k:\tilde{H}^{-1/2}(\Gamma)\to C^2(D)\cap W^1_{loc}(\R^2)$ by 
\begin{align*}
\label{}
\cS_k\phi (\bx)&:=\left\langle \Phi_k(\bx,\cdot),\overline{\phi}\right\rangle_{\Gamma}, \qquad \bx\in \R^2,%
\end{align*}
where $\Phi_k(\bx,\by) := (\ri/4)H_0^{(1)}(k|\bx-\by|)$. %
For $\phi\in L^p(\Gamma)$, with $p>1$, %
the following integral representation holds:
\begin{align*}
\cS_k\phi (\bx) &= \int_\Gamma \Phi_k(\bx,\by) \phi(\by)\, \rd s(\by), \qquad \bx\in \R^2.
\end{align*}
We also define the single-layer boundary integral operator $S_k:\tilde{H}^{-1/2}(\Gamma)\to H^{1/2}(\Gamma)$ by
\begin{align*}
\label{}
S_k\phi &:=\gamma^\pm(\chi\cS_k\phi)|_\Gamma,%
\end{align*}
where $\chi$ is any element of 
$C^\infty_{0,1}(\R^2):=\{\phi \in C_0^\infty(\R^2)$: $\phi=1$ in some neighbourhood of $\Gamma\}$, 
and either of the $\pm$ traces may be taken. For $\phi\in L^p(\Gamma)$, with $p>1$, %
\begin{align}
\label{SDef}
S_k\phi (\bx) &= \int_\Gamma \Phi_k(\bx,\by) \phi(\by)\, \rd s(\by), \qquad \bx\in \Gamma.
\end{align}
Problems $\sP$ and $\sP'$ are equivalent to the same integral equation involving $S_k$, as is made clear by the following theorems, which follow from \citet[Thms~3.8 and~8.6]{ScreenCoercivity} (see also \citet[Theorem~1.7]{StWe:84}).
\begin{thm}
\label{DirEquivThm}
Suppose that $u$ is a solution of Problem $\sP$. Then the representation formula 
\begin{align}
\label{eqn:D_RepThm}
u(\bx )=u^i(\bx)  -\cS_k\left[\pdonetext{u}{\bn}\right](\bx), \qquad\bx\in D,
\end{align}
holds, where 
$[\pdonetext{u}{\bn}]:=\partial^+_\bn(\chi u)-\partial^-_\bn(\chi u)\in \tilde{H}^{-1/2}(\Gamma)$, 
and $\chi$ is an arbitrary element of $C^\infty_{0,1}(\R^2)$. Furthermore, $\phi:=[\pdonetext{u}{\bn}]\in \tilde{H}^{-1/2}(\Gamma)$ satisfies the integral equation
\begin{align}
\label{BIE_sl}
S_k \phi=f,
\end{align}
where $f:=u^i|_\Gamma\in H^{1/2}(\Gamma)$. 
Conversely, suppose that $\phi\in \tilde{H}^{-1/2}(\Gamma)$ satisfies \rf{BIE_sl}. 
Then $u:=u^i-\cS_k\phi$ satisfies Problem $\sP$, and $[\pdonetext{u}{\bn}]=\phi$.
\end{thm}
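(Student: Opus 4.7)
The plan is to treat each direction separately, exploiting the standard Sobolev-setting jump relations for the single-layer potential together with uniqueness for the Helmholtz equation with Sommerfeld radiation condition, all interpreted throughout in the fractional Sobolev framework of \S\ref{sec:sobolev}. For the forward direction, given $u$ solving Problem~$\sP$, the first step is to verify that the objects $[u]$ and $\phi:=[\partial u/\partial\bn]$, defined a priori as elements of $H^{\pm 1/2}(\Gamma_\infty)$, in fact belong to $\tilde H^{\pm 1/2}(\Gamma)$. This follows because $\Gamma_\infty\setminus\overline\Gamma\subset D$ and $u\in W^1_{\mathrm{loc}}(D)$, so the one-sided traces of $\chi u$ from $U^\pm$ must coincide on $\Gamma_\infty\setminus\overline\Gamma$; combined with the Dirichlet condition $\gamma^\pm(\chi u)|_\Gamma=0$, this also gives $[u]=0$ on all of $\Gamma_\infty$.

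Next, set $v:=-\cS_k\phi$, which lies in $W^1_{\mathrm{loc}}(\R^2)\cap C^2(\R^2\setminus\overline\Gamma)$. The standard jump relations for $\cS_k$ in the Sobolev sense (see \cite{ScreenCoercivity}) give $[v]=0$ and $[\partial v/\partial\bn]=\phi$, matching the jump data of $u^s:=u-u^i$ since $u^i$ is entire. Hence the difference $w:=v-u^s$ has vanishing jumps across $\Gamma_\infty$ and satisfies the Helmholtz equation in $\R^2\setminus\Gamma_\infty$; by the absence of jumps $w$ extends to a distributional Helmholtz solution on all of $\R^2$ in $W^1_{\mathrm{loc}}(\R^2)$, and since it radiates, standard uniqueness for the Helmholtz equation forces $w=0$. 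This yields the representation formula \eqref{eqn:D_RepThm}. Applying $\gamma^\pm(\chi\,\cdot)|_\Gamma$ to that identity and using the boundary condition together with the defining property $\gamma^\pm(\chi\cS_k\phi)|_\Gamma=S_k\phi$ produces $S_k\phi=u^i|_\Gamma=f$.

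For the converse, set $u:=u^i-\cS_k\phi$. The mapping property of $\cS_k$ stated before the theorem gives the required regularity, $u$ satisfies Helmholtz in $D$ automatically, and the Sommerfeld condition for $u^s=-\cS_k\phi$ follows from the decay of $\Phi_k$ together with the compact support of $\phi$. The boundary condition reduces to $\gamma^\pm(\chi u)|_\Gamma=u^i|_\Gamma-S_k\phi=f-f=0$, and the jump relation for $\cS_k$ (combined with the entirety of $u^i$) immediately gives $[\partial u/\partial\bn]=\phi$.

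The main obstacle is that the strong edge singularities at $\partial\Gamma$ preclude classical pointwise formulations of Green's theorem or the jump relations, so everything must be interpreted in the low-regularity Sobolev/distributional setting on the full line $\Gamma_\infty$. In particular the $\tilde H^{\pm 1/2}(\Gamma)$-membership of the jumps, which is crucial for $\cS_k$ and $S_k$ to accept $\phi$ as input, rests on the trace-matching argument on $\Gamma_\infty\setminus\overline\Gamma$ described above. The full programme has been carried out in \cite{ScreenCoercivity} for general three-dimensional screens, and I expect the two-dimensional proof here to reduce to a careful specialization of those results (indeed the statement is pitched precisely so as to invoke Theorems~3.8 and~8.6 of that reference directly).
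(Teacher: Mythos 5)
Your proposal is correct and takes essentially the same route as the paper: the paper offers no self-contained proof but simply invokes \cite[Thms~3.8 and~8.6]{ScreenCoercivity}, whose argument is precisely the one you sketch (membership of the jumps in $\tilde H^{\pm 1/2}(\Gamma)$ via trace-matching on $\Gamma_\infty\setminus\overline\Gamma$, the Sobolev-setting jump relations for $\cS_k$, and Rellich-type uniqueness for radiating solutions to identify $u^s$ with $-\cS_k\phi$). You also correctly identify that citation as the intended justification, so there is nothing to add.
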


\begin{thm}
\label{NeuScreenEquivThm}
Suppose that $u'$ is a solution of Problem $\sP'$. Then the representation formula 
\begin{align}
\label{eqn:Dp_RepThm}
u'(\bx )=
\begin{cases}
u^i(\bx) + u^r(\bx) -\cS_k\sumpm{\pdonetext{u'}{\bn}}(\bx), &\bx\in U^+,\\
\cS_k\sumpm{\pdonetext{u'}{\bn}}(\bx), &\bx\in U^-,
\end{cases}
\end{align}
holds, where 
$\sumpm{\pdonetext{u'}{\bn}}(\bx):=\partial^+_\bn(\chi u')+\partial^-_\bn(\chi u')\in \tilde{H}^{-1/2}(\Gamma)$, and $\chi$ is an arbitrary element of $C^\infty_{0,1}(\R^2)$. Furthermore, $\sumpm{\pdonetext{u'}{\bn}}(\bx)\in \tilde{H}^{-1/2}(\Gamma)$ satisfies the integral equation \rf{BIE_sl}. 
Conversely, suppose that $\phi\in \tilde{H}^{-1/2}(\Gamma)$ satisfies \rf{BIE_sl}. Then $u'$, defined by $u':=u^i + u^r-\cS_k\phi$ in $U^+$ and $u':=\cS_k\phi$ in $U^-$, satisfies Problem $\sP'$, and $\sumpm{\pdonetext{u'}{\bn}}=\phi$.
\end{thm}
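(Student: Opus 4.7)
The plan is to mirror the proof of Theorem~\ref{DirEquivThm} (the sound-soft case), replacing the free-space fundamental solution by the image Green's function
\[
G_k(\bx,\by) := \Phi_k(\bx,\by) + \Phi_k(\bx,\by^*), \qquad \by^* := (y_1,-y_2),
\]
which is adapted to the Neumann condition on $\Gamma_\infty$. The first step is to verify that $\phi := \sumpm{\partial_\bn u'}\in \tilde{H}^{-1/2}(\Gamma)$. This uses two ingredients: the fact that $\Gamma\subset D'$ and $u'\in C^2(D')$ force $\partial_\bn^+ u' = \partial_\bn^- u'$ on $\Gamma$, and the weak Neumann condition $\partial_\bn^\pm u' = 0$ on $\Gamma_\infty\setminus\overline\Gamma$. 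Together these imply that the distribution $\phi$ is supported in $\overline\Gamma$, which combined with the standard mapping properties of $\partial_\bn^\pm$ recorded in \S\ref{sec:sobolev} yields $\phi \in \tilde{H}^{-1/2}(\Gamma)$.

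For the forward direction, note that $G_k$ is even in $y_2$ about $y_2 = 0$, so $\partial_{\bn(\by)}G_k(\bx,\by) = 0$ for $\by\in \Gamma_\infty$, and moreover $G_k(\bx,\by) = 2\Phi_k(\bx,\by)$ there. Applying the Green's representation theorem with $G_k$ in place of $\Phi_k$ in $U^+$ to $u^d = u' - u^i - u^r$ and in $U^-$ to $u^d = u'$ — both functions satisfying the Helmholtz equation, the radiation condition, and a homogeneous Neumann condition on $\Gamma_\infty\setminus\overline\Gamma$ — kills the double-layer term and restricts the single-layer integral to $\Gamma$, yielding $u^d(\bx) = \mp \cS_k\phi(\bx)$ for $\bx \in U^\pm$, which is \eqref{eqn:Dp_RepThm}. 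Taking the Dirichlet trace $\gamma^-$ of $u' = \cS_k\phi$ produces $\gamma^- u'|_\Gamma = S_k\phi$; since $u'\in C^2(D')$ is continuous across $\Gamma$, $\gamma^+ u'|_\Gamma = \gamma^- u'|_\Gamma$, and the explicit formula for $u'$ in $U^+$ gives $\gamma^+ u'|_\Gamma = (u^i+u^r)|_\Gamma - S_k\phi = 2f - S_k\phi$ (using $u^i = u^r$ on $x_2 = 0$). Hence $S_k\phi = f$.

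For the converse, given $\phi\in \tilde{H}^{-1/2}(\Gamma)$ with $S_k\phi = f$, define $u'$ by \eqref{eqn:Dp_RepThm}. The Helmholtz equation and the Sommerfeld radiation condition for $u^d = \mp \cS_k\phi$ are immediate from the properties of the single-layer potential of a compactly supported density. The key remaining facts are the reflection symmetry $\cS_k\phi(x_1,x_2) = \cS_k\phi(x_1,-x_2)$, which follows from $\supp\phi\subset\Gamma_\infty$ together with the evenness of $\Phi_k(\bx,\cdot)$ in $y_2$ at $y_2 = 0$, and the standard jump relation $\partial_\bn^+\cS_k\phi - \partial_\bn^-\cS_k\phi = -\phi$. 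Combining these gives $\partial_\bn^\pm \cS_k\phi = \mp\phi/2$ on $\Gamma$ and $\partial_\bn^\pm \cS_k\phi = 0$ on $\Gamma_\infty\setminus\overline\Gamma$, from which one reads off $\partial_\bn^+ u' = \partial_\bn^- u' = \phi/2$ on $\Gamma$, the Neumann condition \eqref{eqn:bc2}, continuity of $u'$ across $\Gamma$ (via $S_k\phi = f$) and hence $u'\in C^2(D')$, and $\sumpm{\partial_\bn u'} = \phi$.

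The main obstacle is the careful Sobolev-theoretic bookkeeping needed to make the preceding distributional statements — in particular the claim $\phi\in \tilde{H}^{-1/2}(\Gamma)$ — rigorous when $u'$ is only assumed to lie in $W^1_{\mathrm{loc}}(D')$ and the traces $\partial_\bn^\pm u'$ a priori live in $H^{-1/2}(\Gamma_\infty)$; this relies on the duality identification \eqref{isdual} and the characterisation of $\tilde{H}^{-1/2}(\Gamma)$ as the subspace of distributions supported in $\overline\Gamma$, and genuinely requires more care than the analogous Dirichlet statement in Theorem~\ref{DirEquivThm}. The remaining steps — Green's representation in a half-plane under the radiation condition and the jump relations for $\cS_k$ on an open arc — are standard and may be invoked from \cite{ScreenCoercivity}.
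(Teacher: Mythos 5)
Your proposal is sound, and it is worth noting at the outset that the paper itself offers no in-text proof of Theorem~\ref{NeuScreenEquivThm}: it simply asserts that the result ``follows from \citet[Thms~3.8 and~8.6]{ScreenCoercivity}''. Your argument --- image Green's function $G_k(\bx,\by)=\Phi_k(\bx,\by)+\Phi_k(\bx,\by^*)$, half-plane Green's representation applied to $u^d$ in $U^+$ and to $u'$ in $U^-$, reflection symmetry of $\cS_k\phi$ in $x_2$, and the jump relation giving $\partial_\bn^\pm\cS_k\phi=\mp\phi/2$ --- is the standard direct route and is consistent with how the cited reference treats the aperture problem, so there is no substantive divergence of method, only a difference in whether the details are written out. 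Two small remarks. First, for the membership $\sumpm{\pdonetext{u'}{\bn}}\in\tilde{H}^{-1/2}(\Gamma)$ the equality $\partial_\bn^+u'=\partial_\bn^-u'$ on $\Gamma$ is not actually needed: the weak Neumann condition alone gives that the restriction of the sum to the open set $\Gamma_\infty\setminus\overline\Gamma$ vanishes, hence $\supp\sumpm{\pdonetext{u'}{\bn}}\subset\overline\Gamma$, and since $\tGamma$ is a finite union of intervals the characterisation $\tilde H^{s}(\Omega)=\{u\in H^s(\R):\supp u\subset\overline\Omega\}$ from \S\ref{sec:sobolev} applies; the equality of the one-sided traces is, as you use it, needed only to identify $\sumpm{\pdonetext{u'}{\bn}}=2\partial_\bn^\pm u'$ on $\Gamma$ in the representation step. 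Second, the Green's identity over the unbounded boundary $\Gamma_\infty$ requires a justification that the contribution from a large half-circle vanishes under the radiation condition on $u^d$ and that $u'\in W^1_{\mathrm{loc}}$ suffices near the screen edges; you correctly flag that this is delegated to \cite{ScreenCoercivity}, which is exactly what the authors do wholesale.
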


The following continuity and coercivity properties of the operator $S_k$ have been proved recently in \citet{ScreenCoercivity,ScreenCoercivityPaper}:
\begin{lem}[{\citet[Theorem~5.2]{ScreenCoercivity}}]
\label{ThmSNormSmooth}
Let $s\in \R$. Then $S_k:\tilde{H}^{s}(\Gamma)\to H^{s+1}(\Gamma)$ is bounded, and for $kL\geq c_0>0$ there exists a constant $C_0>0$, depending only on $c_0$ (specifically, $C_0=C\log(2+c_0^{-1})$, where $C$ is independent of $c_0$),  %
such that
\begin{align}
\label{SEst2D}
\norm{S_k\phi}{H^{s+1}_k(\Gamma)} \leq C_0 (1+\sqrt{kL})\norm{\phi}{\tilde{H}^{s}_k(\Gamma)}, \qquad 
\phi\in \tilde{H}^{s}(\Gamma).
\end{align}
\end{lem}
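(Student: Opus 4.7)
My plan proceeds via Fourier analysis on the hyperplane $\Gamma_\infty \cong \R$, exploiting the explicit Fourier symbol of the single-layer kernel on a line together with the compact support of $\phi$.

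\textbf{Reduction and Fourier setup.} Given $\phi\in\tilde H^s(\Gamma)$, extend by zero to an element of $H^s(\R)$ with unchanged norm; then by \rf{SDef}, $S_k\phi = (\Phi_k(\cdot,0)*\phi)|_\Gamma$. Because $\Phi_k(\cdot,0)$ decays only like $|x|^{-1/2}$ at infinity, a direct $H^{s+1}(\R)$-bound on the full-line convolution $T_k\phi:=\Phi_k(\cdot,0)*\phi$ is impossible. Instead, fix a cutoff $\chi\in C_0^\infty(\R)$ with $\chi\equiv 1$ on $\overline{\tGamma}$ and $\supp\chi\subset[-L,2L]$, and use the infimum definition of the $H^{s+1}_k(\Gamma)$-norm to obtain
\begin{align*}
\|S_k\phi\|_{H^{s+1}_k(\Gamma)}\leq \|\chi\, T_k\phi\|_{H^{s+1}_k(\R)}.
\end{align*}
By Sommerfeld's representation, the $1$D Fourier symbol of $T_k$ has magnitude $c/|\xi^2-k^2|^{1/2}$, and multiplication by $\chi$ becomes convolution by $\hat\chi \in \mathcal{S}(\R)$ on the Fourier side.

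\textbf{Main estimate.} Plancherel in the weighted norm reduces the claim to a weighted $L^2$-bound involving the multiplier $(k^2+\xi^2)^{s+1}/|\xi^2-k^2|$. Outside the transition zone $\mathcal{T}:=\{||\xi|-k|<k/2\}$, the ratio $(k^2+\xi^2)/|\xi^2-k^2|$ is a universal constant, and that part of the integral is immediately bounded by $\|\phi\|^2_{H^s_k}$. Inside $\mathcal{T}$ the squared multiplier has a non-integrable $1/|\xi^2-k^2|$ singularity at $\xi=\pm k$, tamed by two complementary tools: the Schwartz function $\hat\chi$ regularises the singularity by convolution, while the Paley-Wiener theorem ensures that $\hat\phi$ is entire of exponential type $\leq L/2$ and therefore varies slowly at scale $1/L$. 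Splitting $\mathcal{T}$ at scale $1/L$ and applying a Bernstein-type inequality then yields the bound $C_0^2(1+\sqrt{kL})^2\|\phi\|^2_{H^s_k}$ after squaring; the $\log(2+c_0^{-1})$ constant arises from low-frequency regularisation of the logarithmic singularity of $\Phi_k$ in the regime $kL\to c_0$.

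\textbf{Main obstacle.} The principal technical difficulty is extracting the \emph{sharp} $\sqrt{kL}$ growth rather than the $kL$ that a crude estimate would produce, uniformly in $s\in\R$. Sharpness forces precise matching of the Paley-Wiener averaging scale $1/L$ to both the width of $\mathcal{T}$ and the Schwartz decay of $\hat\chi$, and careful control of the commutator between $\chi$ and the fractional operator $(k^2-\partial_x^2)^{(s+1)/2}$. A practical route is to establish the estimate first at the natural exponent $s=-1/2$, where single-layer coercivity considerations make the argument symmetric, and then pass to general $s$ by interpolation or duality.
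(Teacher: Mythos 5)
First, a point of order: the paper does not actually prove this lemma --- it is imported verbatim from \citet[Theorem~5.2]{ScreenCoercivity} --- so there is no in-paper argument to compare against, and your sketch must be judged on its own merits. Your outline does contain the right raw ingredients (the Sommerfeld symbol of magnitude $|k^2-\xi^2|^{-1/2}$, the fact that compact support of $\phi$ makes $\hat\phi$ entire of exponential type and hence controllable by local averages at scale $1/L$, and a transition-zone/elliptic-zone splitting at $|\,|\xi|-k\,|\sim k$ that is responsible for the $\sqrt{kL}$ and the $\log(2+c_0^{-1})$). However, the structural reduction you choose --- multiply $T_k\phi$ by a cutoff and bound $\|\chi T_k\phi\|_{H^{s+1}_k(\mathbb{R})}$ directly --- puts the entire burden of the proof on a step you assert rather than carry out. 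Note that $T_k\phi\notin H^{s+1}(\mathbb{R})$ whenever $\hat\phi(\pm k)\neq 0$ (the integrand $(k^2+\xi^2)^{s+1}|\hat\phi|^2/|\xi^2-k^2|$ has a logarithmic divergence at $\xi=\pm k$), so \emph{everything} rests on the smoothing effect of the convolution $\hat\chi*\widehat{T_k\phi}$; the sentence ``splitting $\mathcal{T}$ at scale $1/L$ and applying a Bernstein-type inequality then yields the bound'' is precisely the theorem, not a proof of it. Two concrete obstacles in this route: (i) to move the weight $(k^2+\xi^2)^{(s+1)/2}$ past the convolution you need a Peetre-type inequality whose constant grows with $|s+1|$, which jeopardises the $s$-robustness of $C_0$ claimed in the statement; (ii) a crude treatment of $\int_{\mathcal{T}}(k^2+\xi^2)^{s+1}|\hat\chi*\widehat{T_k\phi}|^2$ produces $kL$ rather than $\sqrt{kL}$, and you have not exhibited the mechanism that recovers the square root.

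A cleaner and more standard route --- and, as far as one can reconstruct it, the one taken in the cited reference --- is to use the duality $H^{s+1}(\Gamma)=(\tilde{H}^{-s-1}(\Gamma))^*$ of \eqref{isdual} and estimate the pairing $\langle S_k\phi,\psi\rangle=\int_{\mathbb{R}}\hat\phi(\xi)\overline{\hat\psi(\xi)}\,(2\sqrt{k^2-\xi^2})^{-1}\,\rd\xi$ for $\psi\in\tilde{H}^{-1-s}(\Gamma)$. This is not cosmetic: in the pairing the singular factor appears to the power $-1/2$, which is locally integrable, both $\hat\phi$ and $\hat\psi$ are of exponential type so the $1/L$-averaging lemma applies symmetrically to each, and the weights split as $(k^2+\xi^2)^{s/2}$ on $\hat\phi$ and $(k^2+\xi^2)^{-(s+1)/2}$ on $\hat\psi$, leaving a residual factor $(k^2+\xi^2)^{1/2}|k^2-\xi^2|^{-1/2}$ whose transition-zone integral is estimated independently of $s$. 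Finally, your proposed fallback (prove the case $s=-1/2$ and then interpolate or dualise) cannot by itself reach all $s\in\mathbb{R}$: the adjoint of $S_k:\tilde{H}^{s}(\Gamma)\to H^{s+1}(\Gamma)$ is $S_k:\tilde{H}^{-1-s}(\Gamma)\to H^{-s}(\Gamma)$, so duality exchanges $s$ with $-1-s$ and fixes $s=-1/2$, while interpolation requires two distinct endpoints that you do not yet possess. I would recommend abandoning the cutoff-commutator route and working with the duality pairing from the outset.
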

\begin{lem}[{\citet[Theorem~5.3]{ScreenCoercivity}}]
\label{ThmCoerc}
$S_k:\tilde{H}^{-1/2}(\Gamma)\to H^{1/2}(\Gamma)$ satisfies
\begin{align}
\label{SCoercive}
\left |\langle S_k \phi,\phi\rangle_{\Gamma}\right|
\geq \frac{1}{2\sqrt{2}} \norm{\phi}{\tilde{H}^{-1/2}_k(\Gamma)}^2, \qquad k>0, \,\,\, \phi\in \tilde{H}^{-1/2}(\Gamma).
\end{align}
\end{lem}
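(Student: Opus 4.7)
The plan is to reduce the coercivity estimate to a simple estimate on Fourier multipliers on $\R$, using the fact that $\Gamma \subset \Gamma_\infty \cong \R$. First, I would extend $\phi \in \tilde{H}^{-1/2}(\Gamma)$ by zero to an element of $H^{-1/2}(\R)$ (still denoted $\phi$) and rewrite the duality pairing as a full-line pairing; this is legitimate because the pairing $\langle\cdot,\cdot\rangle_\Gamma$ is by definition the extension of the $L^2$ pairing, and $\phi$ has compact support in $\overline{\Gamma}$.

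Next I would compute the Fourier symbol of the boundary operator $\phi \mapsto \gamma^\pm(\chi \cS_k\phi)$ viewed as an operator on $H^{-1/2}(\R)$. Taking the Fourier transform in $x_1$ of the Helmholtz equation in each half-plane and imposing the outgoing radiation condition, one obtains
\begin{equation*}
\widehat{\cS_k\phi|_{\Gamma_\infty}}(\xi) = \frac{\hat\phi(\xi)}{2Z(\xi)}, \qquad Z(\xi) := \begin{cases} \sqrt{\xi^2 - k^2}, & |\xi|>k, \\ -\ri\sqrt{k^2-\xi^2}, & |\xi|<k. \end{cases}
\end{equation*}
By Parseval this gives
\begin{equation*}
\langle S_k\phi,\phi\rangle_{\Gamma} = \int_{\R} \frac{|\hat\phi(\xi)|^2}{2Z(\xi)}\,\rd\xi
= \int_{|\xi|>k}\frac{|\hat\phi(\xi)|^2}{2\sqrt{\xi^2-k^2}}\,\rd\xi + \ri \int_{|\xi|<k}\frac{|\hat\phi(\xi)|^2}{2\sqrt{k^2-\xi^2}}\,\rd\xi,
\end{equation*}
so the real part (from the evanescent modes) and the imaginary part (from the propagating modes) are both non-negative.

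I would then conclude by applying the elementary inequality $|a+\ri b| \geq (|a|+|b|)/\sqrt{2}$ for $a,b \in \R$, together with the pointwise bounds $\sqrt{\xi^2 - k^2} \leq \sqrt{\xi^2 + k^2}$ for $|\xi|>k$ and $\sqrt{k^2-\xi^2} \leq \sqrt{k^2+\xi^2}$ for $|\xi|<k$. These give
\begin{equation*}
|\langle S_k\phi,\phi\rangle_\Gamma| \;\geq\; \frac{1}{\sqrt 2}\int_\R \frac{|\hat\phi(\xi)|^2}{2\sqrt{k^2+\xi^2}}\,\rd\xi \;=\; \frac{1}{2\sqrt 2}\,\|\phi\|^2_{\tilde H^{-1/2}_k(\Gamma)},
\end{equation*}
which is exactly the claimed bound, with the sharp constant $1/(2\sqrt 2)$.

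The main obstacle is the rigorous derivation of the Fourier symbol for distributional $\phi$: the pointwise integral representation \eqref{SDef} does not converge for $\phi\in\tilde H^{-1/2}(\Gamma)$, and one cannot naively apply Parseval to the integrand $\Phi_k(\bx,\by)\phi(\by)$. I would handle this by first establishing the Fourier identity for $\phi \in C_0^\infty(\Gamma)$ (where every integral converges absolutely and one can justify interchanging orders of integration via the Weyl--Sommerfeld plane-wave representation of $\Phi_k$), and then extending by density, using the boundedness of $S_k$ from Lemma~\ref{ThmSNormSmooth} (at $s=-1/2$) together with the continuity of $\phi\mapsto \hat\phi$ on $H^{-1/2}(\R)$ to pass to the limit on both sides.
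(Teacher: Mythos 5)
Your argument is correct, and it is essentially the proof of the result as given in the cited reference \citet[Theorem~5.3]{ScreenCoercivity} (the paper itself offers no proof, importing the lemma verbatim from there): one passes to the Fourier symbol $1/(2Z(\xi))$ of the single-layer operator on the full line $\Gamma_\infty$, notes that its values lie in the closed first quadrant, and concludes via $|a+\ri b|\geq(|a|+|b|)/\sqrt{2}$ together with $|\xi^2-k^2|\leq k^2+\xi^2$, with the identity first justified for $C_0^\infty(\Gamma)$ densities and then extended by density exactly as you describe. The only point to watch is that the full-line trace of $\cS_k\phi$ is not itself in $H^{1/2}(\R)$, so the ``Parseval'' identity should be derived for smooth compactly supported $\phi$ directly from the Weyl--Sommerfeld representation and Fubini (as you propose) rather than from the $H^{1/2}\times H^{-1/2}$ duality on the line; this is a presentational rather than a substantive issue.
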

These results, combined with the standard Lax-Milgram Lemma, 
imply the unique solvability in $\tilde{H}^{-1/2}(\Gamma)$ of the integral equation \rf{BIE_sl} for all $k>0$. In particular we obtain the stability estimate
\begin{align}
\label{eqn:StabEst}
\normt{S_k^{-1}\psi}{\tilde{H}_k^{-1/2}(\Gamma)}\leq 2\sqrt{2} \norm{\psi}{H_k^{1/2}(\Gamma)}, \qquad \psi\in H^{1/2}(\Gamma).
\end{align}
Moreover, Theorems \ref{DirEquivThm} and \ref{NeuScreenEquivThm} then imply the unique solvability of the BVPs:
\begin{thm}
\label{DirExUn}
Problems $\sP$ and $\sP'$ each have unique solutions $u$ and $u'$ for all $k>0$, which satisfy %
\begin{equation}
\label{eqn:corres}
\begin{split}
u(\bx )=u'(\bx)- u^r(\bx), \qquad \bx\in U^+,\\
u(\bx )=u^i(\bx) - u'(\bx), \qquad \bx\in U^-.
\end{split}
\end{equation}
\end{thm}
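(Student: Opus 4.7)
My plan is to reduce everything to the unique solvability in $\tilde{H}^{-1/2}(\Gamma)$ of the single integral equation \rf{BIE_sl} with right-hand side $f:=u^i|_\Gamma\in H^{1/2}(\Gamma)$. Combining the continuity estimate of Lemma~\ref{ThmSNormSmooth} at $s=-1/2$ with the coercivity estimate of Lemma~\ref{ThmCoerc}, the complex Lax--Milgram lemma applied to the sesquilinear form $(\phi,\psi)\mapsto \langle S_k\phi,\psi\rangle_\Gamma$ on $\tilde{H}^{-1/2}(\Gamma)\times\tilde{H}^{-1/2}(\Gamma)$ produces a unique $\phi\in \tilde{H}^{-1/2}(\Gamma)$ solving $S_k\phi=f$, together with the stability bound \rf{eqn:StabEst}. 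This step is precisely what was announced in the sentence preceding the theorem, and I would simply make that argument explicit.

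Existence and uniqueness for both BVPs then follow from Theorems~\ref{DirEquivThm} and \ref{NeuScreenEquivThm}. For existence I would invoke the ``converse'' half of each theorem: the formula $u:=u^i-\cS_k\phi$ gives a solution to Problem~$\sP$, and the piecewise definition of $u'$ in Theorem~\ref{NeuScreenEquivThm} gives a solution to Problem~$\sP'$. For uniqueness, I would take any two candidate solutions, apply the ``forward'' half of each theorem to extract densities ($[\partial u/\partial \bn]$, respectively $\sumpm{\partial u'/\partial \bn}$) that both satisfy the same BIE, conclude these densities coincide by the unique solvability just established, and then read off equality of the BVP solutions from the representation formulas \rf{eqn:D_RepThm} and \rf{eqn:Dp_RepThm}.

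The Babinet-type identities \rf{eqn:corres} would then follow by subtracting \rf{eqn:D_RepThm} and \rf{eqn:Dp_RepThm} term by term, using the fact that the same $\phi$ appears in both. This last point is the only thing that requires a moment's thought: Theorems~\ref{DirEquivThm} and \ref{NeuScreenEquivThm} both assert that the associated densities satisfy the BIE \rf{BIE_sl}, with the right-hand side in both cases being $u^i|_\Gamma$ (this coincidence of right-hand sides is consistent with $u^i=u^r$ on $\Gamma_\infty$, since $\bd'$ and $\bd$ differ only in the $x_2$-component, and is really the algebraic content of Babinet's principle). In $U^+$, the formula for $u'$ minus $u^r$ collapses to the formula for $u$; in $U^-$, $u^i$ minus the formula for $u'$ likewise collapses to the formula for $u$. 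I do not anticipate any serious obstacle here: the heavy analytic lifting has already been imported wholesale via the coercivity estimate \rf{SCoercive} and the two equivalence theorems, and what remains is essentially bookkeeping.
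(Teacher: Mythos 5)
Your proposal is correct and follows essentially the same route as the paper, which derives unique solvability of the BIE from Lemmas~\ref{ThmSNormSmooth} and~\ref{ThmCoerc} via Lax--Milgram and then transfers this to the BVPs through Theorems~\ref{DirEquivThm} and~\ref{NeuScreenEquivThm}, with \rf{eqn:corres} following because the same density $\phi$ appears in both representation formulas. Your additional observation that the two right-hand sides coincide because $u^i=u^r$ on $\Gamma_\infty$ is exactly the point that makes the subtraction work.
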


\section{Analyticity and regularity of solutions}
\label{sec:reg}

Standard elliptic regularity results imply that the unique solution of Problem~$\sP$ is continuous up to $\Gamma$, so that $u\in C(\R^2)$. Since $u(\bx)\sim u^i(\bx)$ as $|\bx|\to\infty$, it follows that
\[
  \uM:=\sup_{\bx\in D}|u(\bx)| <\infty.
\]
In fact $u$ is H\"older continuous with index $1/2$. In particular, defining
$$
 \ell_{min}:=\min_{m\in\{1,\ldots,2n_i-1\}}( s_{m+1}-s_m),
$$
if $k\ell_{min}\geq c_0$ for some $c_0>0$ then 
\begin{equation}
  |u(\bx)| \leq C \uM (kd)^{1/2}, \quad \bx\in D,
  \label{eqn:ubound}
\end{equation}
where $d:= \dist(\bx,\Gamma)$ and $C$ is a constant that depends only on $c_0$. 
Since $u=0$ on $\Gamma$ this is clear by reflection arguments and standard interior elliptic regularity results (e.g.\ \citet[Lemma~2.1]{ChZh:98}) except in a neighbourhood of the corners of $\Gamma$. But near these corners the bound~(\ref{eqn:ubound}) follows from the explicit separation of variables representation for the solution, equation (\ref{eqn:u_r_theta}) below (for more detail see the very similar arguments in \citet[Lemma~3.5]{HeLaMe:11}).

Our HNA method for solving~\rf{BIE_sl} (as a means of solving the BVPs $\sP$ and $\sP'$) uses an approximation space (defined explicitly in \S\ref{ApproxSpace}) which is adapted to the high frequency asymptotic behaviour of the unknown $\phi=[\pdonetext{u}{\bn}]=\sumpm{\pdonetext{u'}{\bn}}$, which we now consider. 
We represent the point $\bx\in\Gamma$ parametrically by $\bx(s):=(s,0)$, where $s\in\tGamma\subset[0,L]$.  Combining the bound~(\ref{eqn:ubound}) with elementary bounds on integral representations for $u$ in the upper and lower half-planes, arguing exactly as in the proof of \citet[Theorem~3.2]{HeLaMe:11}), one can prove the following:
\begin{thm}
\label{vpmThm}
Let $k\ell_{min}\geq c_0>0$. Then for any $j=1,\ldots,n_i$, we have the decomposition
\begin{align}
\label{Decomp}
\phi(\bx(s))
= \Psi(\bx(s)) + v_j^+(s-s_{2j-1})\re^{\ri ks} +v_j^-(s_{2j}-s) \re^{-\ri ks}, & \quad s\in(s_{2j-1},s_{2j}), 
\end{align}
where $\Psi := 2\pdonetext{u^i}{\bn}$, and the functions $v_j^\pm(s)$ are analytic in the right half-plane $\real{s}>0$, with %
\begin{align}
\label{vpmBounds}
|v_j^\pm(s)|\leq C_1 \uM k|ks|^{-1/2}, \qquad \real{s}>0,
\end{align}
where the constant $C_1>0$ depends only 
on $c_0$.%
\end{thm}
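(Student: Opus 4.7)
The plan is to adapt the argument of \citet[Theorem~3.2]{HeLaMe:11}, where the analogous decomposition is established for each side of a convex polygonal scatterer. Fix $j\in\{1,\ldots,n_i\}$ and parameterise $\Gamma_j$ by $\bx(s)=(s,0)$ for $s\in(s_{2j-1},s_{2j})$. The first step is to obtain an integral representation for $\phi(\bx(s)) = \partial^+_\bn u(\bx(s)) - \partial^-_\bn u(\bx(s))$. Applying Green's identity to the radiating scattered field $u^s = u - u^i$ in each of the half-planes $U^\pm$, using the free-space Green's function $\Phi_k$, one expresses $u^s$ in $U^\pm$ as a boundary integral of its Cauchy data on $\Gamma_\infty$. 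Taking the normal derivative as $\bx \to \bx(s)$ from $U^\pm$ and subtracting, one uses the Dirichlet condition $\gamma^\pm u|_\Gamma = 0$ and the smoothness of $u$ across the apertures $\Gamma_\infty\setminus\overline{\Gamma}$ to eliminate the traces where they are unknown; the physical-optics term $\Psi = 2\,\partial u^i/\partial \bn$ then emerges as the contribution that would survive if $\Gamma$ were the whole line, while the remainder is a finite sum of ``diffraction'' integrals, one associated to each connected component of $\Gamma_\infty \setminus \overline{\Gamma}$.

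The second step is to analyse each such diffraction integral by deforming the $y_1$-contour into the complex plane. For an aperture lying to the left of $\Gamma_j$ (so $y_1 < s_{2j-1}$ on the real axis), the kernel phase factorises as $\re^{\ri k|\bx(s) - \by|} = \re^{\ri k s}\,\re^{-\ri k y_1}$, and rotating the $y_1$-contour appropriately into the complex plane leaves an integral in which $\re^{\ri k s}$ is the only $s$-oscillatory factor. Apertures to the right of $\Gamma_j$ are treated symmetrically, yielding the factor $\re^{-\ri k s}$. Collecting the left-aperture contributions defines $v_j^+(s - s_{2j-1})$ and the right-aperture contributions define $v_j^-(s_{2j} - s)$, producing the decomposition \rf{Decomp}.

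Analyticity of $v_j^\pm$ in $\real{s} > 0$ is immediate from the analyticity of the deformed integrand in the complex parameter $s$, and the bound \rf{vpmBounds} follows by estimating the deformed integrals using \rf{eqn:ubound} for $|u|$ together with the $\bigl(k|\by - \bx(s)|\bigr)^{-1/2}$ algebraic decay and off-axis exponential decay of the Hankel function along the rotated contour. The main technical obstacle will be justifying the contour deformation near the corners $s_{2j-1}$ and $s_{2j}$ themselves, where $u$ has a $d^{1/2}$ Dirichlet singularity and $\Phi_k$ has a logarithmic singularity: this requires a careful limiting argument to verify that no spurious contributions arise during the rotation, carried out along the lines of the corresponding step in \citet[Theorem~3.2]{HeLaMe:11}.
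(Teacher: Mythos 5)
Your overall strategy (follow the proof of Theorem~3.2 of the convex-polygon paper: represent $u$ in each half-plane, reduce to integrals over the complement of $\Gamma_j$ in $\Gamma_\infty$, factor out $\re^{\pm \ri k s}$ using collinearity, and bound the amplitudes via \rf{eqn:ubound}) is the right one, and matches what the paper does. But your Step 1 contains a genuine gap: applying Green's identity with the \emph{free-space} kernel $\Phi_k$ in $U^+$ and $U^-$ and subtracting reproduces exactly the representation $u=u^i-\cS_k[\pdonetext{u}{\bn}]$ of Theorem~\ref{DirEquivThm}; taking the jump of its normal derivative across $\Gamma$ then returns the tautology $\phi=\phi$, because the jump of $\partial_\bn u^i$ is zero and the jump of the normal derivative of a single-layer potential is its density. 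The term $\Psi=2\,\pdonetext{u^i}{\bn}$, with its factor of $2$, does not ``emerge'' from this route. The essential missing ingredient is the half-plane \emph{Dirichlet} Green's function (reflection principle): one writes $u=u^i+u^r+2\int_{\Gamma_\infty}\pdonetext{\Phi_k(\bx,\by)}{y_2}\,u(\by)\,\rd s(\by)$ in $U^+$ (with the analogous downward-propagating representation in $U^-$), so that $u$ in each half-plane is expressed \emph{only} through its Dirichlet trace on $\Gamma_\infty$ — which vanishes on all of $\Gamma$ and is controlled on the apertures by $\uM$ and \rf{eqn:ubound}. This is what removes the unknown Neumann data on $\Gamma_j$ itself, confines the remaining integrals to the apertures, and produces both the factor $2$ in $\Psi$ and the kernel $\partial^2\Phi_k/\partial x_2\partial y_2=(\ri k/4)H_1^{(1)}(k|s-t|)/|s-t|$. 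Justifying these half-plane formulae also requires checking an upward/downward propagation condition for $u-(u^i+u^r)$ in $U^\pm$, which does not follow merely from the Sommerfeld condition for $u^s$ in $D$.

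Your Step 2 is also not viable as stated: you cannot rotate the $y_1$-contour into the complex plane, because the integrand contains the boundary trace $u(y_1,0)$, which is known only for real $y_1$ in the apertures (it has square-root singularities at every edge and admits no usable analytic continuation off the real line). The analyticity of $v_j^\pm$ in $\real{s}>0$ is instead obtained with the integration variable kept real: one writes $H_1^{(1)}(k(s-t))=\re^{\ri k(s-t)}\tilde H(k(s-t))$ where $w\mapsto \re^{-\ri w}H_1^{(1)}(w)/w$ is analytic and $\ord{|w|^{-3/2}}$ in $\real{w}>0$, and observes that for fixed real $t$ in a left aperture the map $s\mapsto \tilde H(k(s-t))/(s-t)$ is analytic whenever $\real{s-s_{2j-1}}>0$; analyticity of the integral then follows by Morera/dominated convergence. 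The bound \rf{vpmBounds} comes from exactly the computation you gesture at — $|u(t,0)|\lesssim \uM(k(s_{2j-1}-t))^{1/2}$ near the adjacent edge integrated against the $|s-t|^{-2}$ kernel singularity yields the $|ks|^{-1/2}$ blow-up, with $k\ell_{min}\geq c_0$ controlling the contributions of the remaining apertures — but no ``spurious contributions from the rotation'' need to be ruled out, since no rotation takes place. With Step 1 replaced by the image-method representation and Step 2 by the direct analyticity argument, the rest of your outline goes through.
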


\begin{rem}
\label{rem:NonOscillatory}
The analyticity of the functions $v_j^\pm$ and the bound \rf{vpmBounds} imply that $v_j^\pm$ are non-oscillatory. Explicitly, by the Cauchy integral formula for derivatives, the derivatives of $v_j^\pm$ satisfy bounds of the form 
$|{v_j^\pm}^{(n)}(s)| \leq c_n C_1 M(u)k^{1/2}s^{-(n+1/2)}$ for $s>0$, $n\in\N_0$ and $c_n$ a constant depending only on $n$. The lack of oscillation is indicated by the fact that the $k$-dependence of these bounds is the same for all $n$.
\end{rem}

\begin{rem}
\label{rem:MMprime}
Note that by the correspondence~\rf{eqn:corres} we can bound $\uM$ above and below by a multiple of $\uMp$, precisely $\uMp/2\leq \uM\leq 2\uMp$.
\end{rem}

\begin{rem}
\label{rem:PO}
For the screen Problem $\sP$, the representation~\rf{Decomp} can be interpreted in terms of high frequency asymptotic theory as follows. The first term, $\Psi$, is the 
geometrical optics (GO) approximation to $\phi=[\pdonetext{u}{\bn}]$, representing the direct contribution of the incident and reflected waves.  (Using this approximation alone in the representation~\rf{eqn:D_RepThm} gives the ``physical optics'' approximation of $u$ in $D$.)  The second and third terms in~\rf{Decomp} represent the combined contribution of all the diffracted waves (including multiply-diffracted waves that have travelled arbitrarily many times along and between the different components of the screen) propagating right (oscillating like $\re^{\ri ks}$ and with a singularity at $s_{2j-1}$) and left (oscillating like $\re^{-\ri ks}$ and with a singularity at $s_{2j}$) respectively along the screen segment $\Gamma_j$. A similar interpretation holds for the aperture Problem $\sP'$.  Comparing~(\ref{vpmBounds}) with \citet[(3.5)]{HeLaMe:11}, we see that for $|s|>1/k$ our functions $v_j^{\pm}$ satisfy an identical bound to the comparable functions for the problem of scattering by convex polygons; for $|s|<1/k$ however, the singularity is stronger for the screen problem, with the exponent of $-1/2$ in~(\ref{vpmBounds}) comparing to an exponent in the interval $(-1/2,0)$ for scattering by convex polygons, with the exact value dependent on the corner angle.  This makes clear the fact, alluded to in \S\ref{Intro}, that $v_j^{\pm}\not\in L^2(\Gamma)$ (unlike the comparable functions for scattering by convex polygons).%
\end{rem}

The dependence of the constant $\uM$ in \rf{vpmBounds} on the wavenumber $k$ is not yet fully understood. The following lemma provides an upper bound on $\uM$ which implies that $\uM=\ord{k}$ as $k\to\infty$. However, we do not believe this bound is sharp; in \citet[Theorem 4.3]{HeLaMe:11} it is shown for the case of scattering by a star-like sound-soft polygon that $\uM = \ord{k^{1/2}\log^{1/2}k}$ as $k\to\infty$, uniformly with respect to the angle of incidence, with numerical results therein suggesting the plausibility of the hypothesis $\uM=\ord{1}$ as $k\to\infty$.  Such a hypothesis is also plausible for the screen problem, and consistent with the numerical results in~\S\ref{sec:num}, but we cannot yet prove this.%

\begin{lem}
\label{lem:Mu}
Let $\Gamma$ be of the form \rf{eqn:GammaDef} and let $k\ell_{min}\geq c_0>0$. Then there exists a constant $C_2>0$, depending only on $c_0$, 
such that
\begin{align*}
 \label{}
 \uM\leq C_2 (1+kL).%
 \end{align*} 
\end{lem}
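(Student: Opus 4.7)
The starting point is the integral representation $u = u^i - \cS_k\phi$ in $D$ from Theorem~\ref{DirEquivThm}, combined with the near-screen bound~\rf{eqn:ubound}; since $|u^i|\equiv 1$, it suffices to control $|\cS_k\phi(\bx)|$ uniformly in $\bx\in D$. The first step is to bound the density $\phi = S_k^{-1}(u^i|_\Gamma)$ in its natural norm. The stability estimate~\rf{eqn:StabEst} (a consequence of Lemma~\ref{ThmCoerc}) gives $\|\phi\|_{\tilde H^{-1/2}_k(\Gamma)} \leq 2\sqrt{2}\,\|u^i\|_{H^{1/2}_k(\Gamma)}$, and applying the Cauchy--Schwarz interpolation $\|v\|_{H^{1/2}_k(\R)}^2 \leq \|v\|_{L^2(\R)}\|v\|_{H^1_k(\R)}$ to a smooth, compactly supported extension of $u^i|_\Gamma=\re^{\ri k d_1 s}$ supported in an interval of length $\ord{L}$ (using $kL\geq c_0$ to control cutoff terms) yields $\|u^i\|_{H^{1/2}_k(\Gamma)} \leq C(kL)^{1/2}$, so
\[
\|\phi\|_{\tilde H^{-1/2}_k(\Gamma)} \leq C(kL)^{1/2}.
\]

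The second step is a pointwise bound on $\cS_k\phi$ at points bounded away from $\Gamma$, obtained by duality. Fix $\eps>0$, to be chosen below. For $\bx\in D$ with $k\,\dist(\bx,\Gamma)\geq\eps$, the function $\by\mapsto\Phi_k(\bx,\by)$ is smooth on $\Gamma$, and
\[
|\cS_k\phi(\bx)| = \bigl|\langle\phi,\Phi_k(\bx,\cdot)\rangle_\Gamma\bigr| \leq \|\phi\|_{\tilde H^{-1/2}_k(\Gamma)}\,\|\Phi_k(\bx,\cdot)\|_{H^{1/2}_k(\Gamma)}.
\]
Using the asymptotic bounds $|H_0^{(1)}(z)|,|H_1^{(1)}(z)|\leq Cz^{-1/2}$ valid for $z\geq\eps$, a direct computation of the integrals along $\Gamma$ produces $\|\Phi_k(\bx,\cdot)\|_{L^2(\Gamma)}^2\leq Ck^{-1}\log(2+kL)$ and $\|\partial_s\Phi_k(\bx,\cdot)\|_{L^2(\Gamma)}^2\leq Ck\log(2+kL)$; the same interpolation inequality then yields $\|\Phi_k(\bx,\cdot)\|_{H^{1/2}_k(\Gamma)}\leq C\log^{1/2}(2+kL)$ uniformly in such $\bx$, with constants depending on $\eps$. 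Combining with Step~1 gives $|\cS_k\phi(\bx)|\leq C\bigl(kL\log(2+kL)\bigr)^{1/2}\leq C(1+kL)$, so $|u(\bx)|\leq C(1+kL)$ for all $\bx$ with $k\,\dist(\bx,\Gamma)\geq\eps$.

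To close the argument I would handle points close to $\Gamma$ using~\rf{eqn:ubound}: for $k\,\dist(\bx,\Gamma)<\eps$ that bound gives $|u(\bx)|\leq C_1\uM\,\eps^{1/2}$. Choose $\eps$ so small that $C_1\eps^{1/2}\leq 1/2$. Since the scattered field decays at infinity, $|u(\bx)|\to 1$ as $|\bx|\to\infty$ uniformly, so either $\uM\leq 1\leq 1+kL$ (and we are done), or $\uM>1$ is attained at some $\bx_0$ in a bounded subset of $D$; in the latter case $|u(\bx_0)|=\uM>\uM/2$, forcing $k\,\dist(\bx_0,\Gamma)\geq\eps$, and Step~2 applied at $\bx_0$ gives $\uM\leq C_2(1+kL)$. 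The main obstacle is the Step~2 estimate of $\|\Phi_k(\bx,\cdot)\|_{H^{1/2}_k(\Gamma)}$: the logarithmic singularity of $\Phi_k$ at $\bx=\by$ makes this norm depend sensitively on $\dist(\bx,\Gamma)$, so one must carefully split the integrals along $\Gamma$ according to the small- versus large-argument Hankel asymptotics, and correctly handle the passage between the on-screen $H^{1/2}_k(\Gamma)$ norm (defined via extension/restriction to $\R$) and the explicit $L^2$ and $H^1_k$ norms of the boundary trace.
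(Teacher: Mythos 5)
Your proof is correct in its overall structure, but it reaches the bound by a genuinely different route from the paper, and the difference is worth spelling out. Both arguments share the same first two ingredients: the stability estimate \rf{eqn:StabEst} together with Lemma~\ref{lem:H_one_half_norms}(i) to get $\normt{\phi}{\tilde H^{-1/2}_k(\Gamma)}\leq C(1+\sqrt{kL})$, and the duality bound $|\cS_k\phi(\bx)|\leq \normt{\Phi_k(\bx,\cdot)}{H^{1/2}_k(\Gamma)}\normt{\phi}{\tilde H^{-1/2}_k(\Gamma)}$ with Lemma~\ref{lem:H_one_half_norms}(ii), which is exactly the paper's Proposition~\ref{cor:SolnBound}; restricted to $kd\geq\eps$ this gives your uniform far-field bound (and you would do better to simply cite Lemma~\ref{lem:H_one_half_norms}(ii) here rather than re-derive it — your sketch via Hankel asymptotics and Fourier-side Cauchy--Schwarz is plausible but, as you acknowledge, leaves open the passage from $L^2$/$H^1_k$ norms of the trace to the extension-defined $H^{1/2}_k(\Gamma)$ norm). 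Where you genuinely diverge is in handling the region $kd<\eps$, where the duality bound blows up. The paper closes this gap with two further technical results: a separation-of-variables expansion near the screen bounding $|u(\bx)|$ by $k\norm{u^s}{L^2((\Gamma)_{\eps_*})}$ (Lemma~\ref{lem:SepVarsInterval}) and an $L^2$ estimate of $\cS_k\phi$ on a neighbourhood of $\Gamma$ (Lemma~\ref{lem:L2Est}), combined in Proposition~\ref{cor:pointwise2}. You instead invoke the H\"older estimate \rf{eqn:ubound}, choose $\eps$ so that $C(k d)^{1/2}\leq 1/2$ there, and conclude that the supremum is controlled by $\max\{\uM/2,\,B\}$ where $B$ is the far-region bound; since $\uM<\infty$ is known a priori this immediately gives $\uM\leq B$ (you do not even need the attainment discussion). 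This bootstrap is logically sound, is considerably shorter, and avoids Lemmas~\ref{lem:SepVarsInterval} and~\ref{lem:L2Est} entirely.

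Two remarks. First, your entire argument hinges on the constant in \rf{eqn:ubound} depending only on $c_0$; that estimate is only sketched in the paper (reflection plus the corner expansion \rf{eqn:u_r_theta}), so you are leaning on an asserted-but-not-fully-proved preliminary, and you should satisfy yourself that the corner expansion really yields $(kd)^{1/2}$ rather than $(kr)^{1/2}$ when $\bx$ is close to the face of the screen but not to an edge (it does, using $|\sin(n\theta/2)|\leq n\theta/2$, but this needs saying). Second, your route actually yields the sharper bound $\uM\leq C(1+\sqrt{kL})\log^{1/2}(2+kL)$, since the near-screen region no longer contributes the paper's bottleneck factor $(1+kL)$ from Proposition~\ref{cor:pointwise2}. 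That would be an improvement on Lemma~\ref{lem:Mu} itself (consistent with the authors' stated belief that their $\ord{k}$ bound is not sharp, and matching the polygon rate they cite), which is a strong signal that you should check the uniformity in $k$ of \rf{eqn:ubound} with particular care before relying on it.
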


The remainder of this section consists entirely of the proof of Lemma~\ref{lem:Mu}; readers more interested in the numerical method may skip immediately to~\S\ref{ApproxSpace}.  The proof of Lemma~\ref{lem:Mu} comprises three stages. First, in Proposition~\ref{cor:SolnBound}, we derive a pointwise bound on $|u(\bx)|$ which is valid in the whole domain $D$, but which is non-uniform. %
This bound follows from the following lemma, a proof of which can be found in \citet[Lemma 7.1]{ScreenCoercivity} - see also \citet{ScreenCoercivityPaper} for slightly sharper bounds.%
\begin{lem}
\label{lem:H_one_half_norms}
Let $k>0$ and let $\Gamma$ be of the form \rf{eqn:GammaDef}, with $\Gamma_\infty$, $D$ and $L$ defined as in~\S\ref{Intro}.%
\begin{enumerate}[(i)]
\item Let $\bd\in\R^2$ with $|\bd|\leq 1$. Then there exists $C>0$, independent of $\bd$, $k$ and $\Gamma$, such that 
\begin{align*}
 \label{}
 \normt{\re^{\ri k \bd\cdot (\cdot)}}{H^{1/2}_k(\Gamma)} \leq C(1+\sqrt{kL}).
 \end{align*} 
\item Let $\bx\in D$ and $d:=\dist(\bx,\Gamma)$. %
Then there exists $C>0$, independent of $\bx$, $k$ and $\Gamma$, such that
 \begin{align*}
 \label{}
 \norm{\Phi_k(\bx,\cdot)}{H^{1/2}_k(\Gamma)} \leq C\left(1+\frac{1}{\sqrt{kL}}\right)\left(1+\frac{1}{\sqrt{kd}}\right)\log\left(2+\frac{1}{kd}\right)\log^{1/2}(2+kL).
 \end{align*} 
\end{enumerate}
\end{lem}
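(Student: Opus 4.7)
The plan is to combine the $k$-explicit integral equation machinery of~\S\ref{sec:prob} with the pointwise near-field bound~\rf{eqn:ubound}, via the three-stage scheme flagged in the text and completed by a bootstrap.

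For the first stage, Proposition~\ref{cor:SolnBound}, I would derive a pointwise bound on $|u(\bx)|$ valid everywhere on $D$ but degenerating as $d:=\dist(\bx,\Gamma)\to 0$. By Theorem~\ref{DirEquivThm}, $u=u^i-\cS_k\phi$ with $\phi=S_k^{-1}(u^i|_\Gamma)\in \tilde H^{-1/2}(\Gamma)$. For $\bx\in D$ the potential evaluates as $\cS_k\phi(\bx)=\langle \Phi_k(\bx,\cdot),\overline{\phi}\rangle_\Gamma$, so the duality inequality from \S\ref{sec:sobolev} gives
\[
|u(\bx)|\leq 1+\|\Phi_k(\bx,\cdot)\|_{H^{1/2}_k(\Gamma)}\,\|\phi\|_{\tilde H^{-1/2}_k(\Gamma)}.
\]
The stability estimate~\rf{eqn:StabEst} combined with Lemma~\ref{lem:H_one_half_norms}(i) applied to $f=u^i|_\Gamma$ (noting $|\bd|=1$) yields $\|\phi\|_{\tilde H^{-1/2}_k(\Gamma)}\leq C(1+\sqrt{kL})$, and Lemma~\ref{lem:H_one_half_norms}(ii) controls $\|\Phi_k(\bx,\cdot)\|_{H^{1/2}_k(\Gamma)}$ explicitly in terms of $d$. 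Multiplying the two, using $kL\geq c_0$ to absorb $1/\sqrt{kL}$ into a constant depending only on $c_0$, and simplifying via $(1+\sqrt{kL})\leq C(1+kL)^{1/2}$, I obtain a bound of the form
\[
|u(\bx)|\leq C_3\,(1+kL)^{1/2}\bigl(1+(kd)^{-1/2}\bigr)\log\!\bigl(2+(kd)^{-1}\bigr)\log^{1/2}(2+kL), \qquad \bx\in D.
\]

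For the second stage I invoke~\rf{eqn:ubound}, which supplies $|u(\bx)|\leq C\,\uM\,(kd)^{1/2}$ and is small precisely where the Stage~1 estimate is poor. The third stage is the bootstrap: split $D=D_{\mathrm{near}}\cup D_{\mathrm{far}}$, where $D_{\mathrm{near}}:=\{\bx\in D:kd\leq\delta\}$ and $D_{\mathrm{far}}:=\{\bx\in D:kd>\delta\}$ for a fixed $\delta>0$ depending only on $c_0$. On $D_{\mathrm{near}}$, \rf{eqn:ubound} gives $|u(\bx)|\leq C\sqrt{\delta}\,\uM$, and choosing $\delta$ small enough that $C\sqrt{\delta}\leq 1/2$ forces $|u(\bx)|\leq \uM/2$. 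On $D_{\mathrm{far}}$ the factors $(1+(kd)^{-1/2})$ and $\log(2+(kd)^{-1})$ are bounded by constants depending only on $\delta$, and the elementary inequality $\log(2+x)\leq 1+x$ gives $(1+kL)^{1/2}\log^{1/2}(2+kL)\leq 1+kL$, so Stage~1 simplifies to $|u(\bx)|\leq C(\delta)(1+kL)$. Taking the supremum over $D$ yields $\uM\leq \max\{\uM/2,\,C(\delta)(1+kL)\}$, which rearranges to the claimed estimate with $C_2=2C(\delta)$.

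The main obstacle is Stage~1: everything hinges on Lemma~\ref{lem:H_one_half_norms}(ii) producing a blow-up as $d\to 0$ that is mild enough (at worst $d^{-1/2}$ up to logarithms) to be overpowered by the $(kd)^{1/2}$ decay supplied by~\rf{eqn:ubound}. A secondary subtlety is ensuring~\rf{eqn:ubound} really is uniform up to the corners of $\Gamma$; away from the corners this is a standard reflection argument, while near the corners one must appeal to the explicit separation-of-variables representation for $u$, as flagged in the discussion preceding the lemma.
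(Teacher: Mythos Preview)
Your proposal is not a proof of Lemma~\ref{lem:H_one_half_norms} at all: it is a proof sketch for Lemma~\ref{lem:Mu}, the bound $\uM\leq C_2(1+kL)$. Indeed, you explicitly \emph{invoke} Lemma~\ref{lem:H_one_half_norms}(i) and~(ii) as ingredients in your Stage~1. The statement you were asked to prove concerns the $H^{1/2}_k(\Gamma)$ norms of the plane-wave trace $\re^{\ri k\bd\cdot(\cdot)}$ and the fundamental solution $\Phi_k(\bx,\cdot)$; nothing in your argument addresses these quantities. (For the record, the paper does not prove Lemma~\ref{lem:H_one_half_norms} either: it is quoted from~\citet[Lemma~7.1]{ScreenCoercivity}.)

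That said, viewed as an attempt at Lemma~\ref{lem:Mu}, your bootstrap is a genuinely different and more economical route than the paper's. The paper's second stage does \emph{not} use~\rf{eqn:ubound}; instead it establishes, via a separation-of-variables argument in half- and full disks (Lemma~\ref{lem:SepVarsInterval}), a bound on $|u|$ in a thin neighbourhood of $\Gamma$ in terms of $\norm{u^s}{L^2((\Gamma)_{\eps_*})}$, and then controls that $L^2$ norm by a direct estimate on $\cS_k\phi$ (Lemma~\ref{lem:L2Est}). Only after both near-field and far-field bounds are independent of $\uM$ are they combined. Your approach short-circuits both auxiliary lemmas by feeding the $(kd)^{1/2}$ H\"older decay of~\rf{eqn:ubound} back into the supremum: on $\{kd\leq\delta\}$ you get $|u|\leq \tfrac12\uM$, which cannot realise the supremum, forcing $\uM$ to be attained where the Stage~1 bound already gives $C(\delta)(1+kL)$. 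This is valid provided~\rf{eqn:ubound} is available a priori with constant depending only on $c_0$, which the paper asserts (the near-corner case requiring the local expansion~\rf{eqn:u_r_theta}). The price is that your constant $C_2$ inherits the constant from~\rf{eqn:ubound}, whereas the paper's route makes the near-field constant explicit.
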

From Lemma~\ref{lem:H_one_half_norms}, Theorem~\ref{DirEquivThm} and~\rf{eqn:StabEst}, one can derive the following result (cf.\ \citet[Cor. 7.2]{ScreenCoercivity}). 
Note that the bound~\rf{eqn:pointwisebound} blows up as $\bx$ approaches $\Gamma$ (i.e.\ as $d\to 0$).%
\begin{prop}
\label{cor:SolnBound}
The solution $u$ of Problem $\sP$ satisfies the pointwise bound
\begin{align}
\label{eqn:pointwisebound}
|u(\bx)|\leq C\left(1+\frac{1}{\sqrt{kL}}\right)\left(1+\frac{1}{\sqrt{kd}}\right)\log\left(2+\frac{1}{kd}\right)\log^{1/2}(2+kL)(1+\sqrt{kL}), \qquad \bx\in D,
\end{align}
where $d=\dist(\bx,\Gamma)$, and $C>0$ is independent of $\bx$, $k$ and $\Gamma$. 
\end{prop}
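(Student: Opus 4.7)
The plan is to apply the representation formula of Theorem~\ref{DirEquivThm} together with a duality bound on the single-layer potential, then plug in the three ingredients listed just before the proposition: Lemma~\ref{lem:H_one_half_norms}(i) and (ii) and the stability estimate \rf{eqn:StabEst}.

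First, I would use Theorem~\ref{DirEquivThm} to write
\[
u(\bx) = u^i(\bx) - \cS_k\phi(\bx), \qquad \bx\in D,
\]
where $\phi = S_k^{-1}(u^i|_\Gamma)\in \tilde{H}^{-1/2}(\Gamma)$. Since $u^i$ is a unit-amplitude plane wave, $|u^i(\bx)|=1$, so the substantive task is a pointwise bound on the single-layer potential.

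Next, I would observe from the definition of $\cS_k$ that, for fixed $\bx\in D$,
\[
\cS_k\phi(\bx)=\langle \Phi_k(\bx,\cdot),\overline{\phi}\rangle_{\Gamma},
\]
a duality pairing between $H^{1/2}(\Gamma)$ and $\tilde{H}^{-1/2}(\Gamma)$. The duality bound \eqref{isdual} (more precisely the inequality immediately following it) gives
\[
|\cS_k\phi(\bx)|\leq \|\Phi_k(\bx,\cdot)\|_{H^{1/2}_k(\Gamma)}\,\|\phi\|_{\tilde{H}^{-1/2}_k(\Gamma)}.
\]
Combining the stability estimate \rf{eqn:StabEst} with Lemma~\ref{lem:H_one_half_norms}(i) yields
\[
\|\phi\|_{\tilde{H}^{-1/2}_k(\Gamma)}\leq 2\sqrt{2}\,\|u^i|_\Gamma\|_{H^{1/2}_k(\Gamma)}\leq C(1+\sqrt{kL}),
\]
and Lemma~\ref{lem:H_one_half_norms}(ii) handles $\|\Phi_k(\bx,\cdot)\|_{H^{1/2}_k(\Gamma)}$ directly. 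Multiplying these two bounds reproduces exactly the right-hand side of \rf{eqn:pointwisebound}, and the trivial contribution $|u^i(\bx)|=1$ is absorbed into the constant $C$, since the product above is uniformly bounded below by a positive constant (the factor $\log(2+1/(kd))\geq \log 2$ alone suffices).

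I do not expect any real obstacle here, as all the hard work is done in the cited lemmas; the only points to be careful with are (a) identifying $\cS_k\phi(\bx)$ as the duality pairing with the right ordering of arguments, so that the $H^{1/2}$-norm is on $\Phi_k(\bx,\cdot)$ and the $\tilde{H}^{-1/2}$-norm on $\phi$, and (b) ensuring that the constants from the three inputs combine into a single $C$ that, as claimed, depends on neither $\bx$, nor $k$, nor $\Gamma$.
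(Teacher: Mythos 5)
Your proposal is correct and follows essentially the same route the paper indicates: the representation formula of Theorem~\ref{DirEquivThm}, the duality bound $|\cS_k\phi(\bx)|\leq \norm{\Phi_k(\bx,\cdot)}{H^{1/2}_k(\Gamma)}\norm{\phi}{\tilde{H}^{-1/2}_k(\Gamma)}$, the stability estimate \rf{eqn:StabEst} combined with Lemma~\ref{lem:H_one_half_norms}(i), and Lemma~\ref{lem:H_one_half_norms}(ii) for the kernel norm. The handling of the $|u^i|=1$ term via the lower bound on the right-hand side is also fine.
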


The second stage in the proof of Lemma~\ref{lem:Mu} involves the derivation of a uniform bound on $|u(\bx)|$ valid on a neighbourhood of $\Gamma$. We begin by using a separation of variables argument to bound $|u(\bx)|$ close to $\Gamma$ in terms of the $L^2$ norm of the scattered field in a neighbourhood of $\Gamma$.
\begin{lem}
\label{lem:SepVarsInterval}
Let $\Gamma$ be of the form \rf{eqn:GammaDef}, and let $u$ be the corresponding solution of Problem $\sP$, with
$u^s=u-u^i$. 
Let $\eps_*:=\min\{\ell_{min}/2,\pi/(3k)\}$, where $\ell_{min}$ is defined as at the start of this section. Then
\begin{align}
\label{eqn:SepVarEst}
|u(\bx)|\leq \frac{32}{3\sqrt{\pi}(\sqrt{2}-1)}\left(1+ k\left(1+(k\ell_{min})^{-1}\right)\norm{u^s}{L^2((\Gamma)_{\eps_*})}\right), \qquad \bx\in(\Gamma)_{{\eps_*}/32},
\end{align}
where for $E\subset\R^2$ and $\eps>0$, $(E)_{\eps} := \{\bx\in\R^2: \, \dist(\bx,E)\leq\eps\}$.%
\end{lem}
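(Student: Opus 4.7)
\smallskip
\noindent
\emph{Strategy.} The plan is to expand $u$ as a local Fourier--Bessel series in polar coordinates centred at a suitable point $\bx_* \in \Gamma$, and bound the coefficients via Parseval by the $L^2$ norm of $u$ on a small annular sector. Given $\bx \in (\Gamma)_{\eps_*/32}$, let $\bx_0 \in \Gamma$ be a nearest point to $\bx$, so that $|\bx - \bx_0| \leq \eps_*/32$. Take $\bx_* := \bx_c$, a corner of $\Gamma$, if $\bx_0$ lies within distance $\eps_*/32$ of such a corner $\bx_c$, and $\bx_* := \bx_0$ otherwise. Thanks to $\eps_* \leq \ell_{min}/2$, in either case $\Gamma \cap B(\bx_*, \eps_*)$ consists of a single straight piece---a diameter of the disk if $\bx_*$ is in the interior of a segment, a single radius if $\bx_*$ is a corner---and $|\bx - \bx_*| \leq \eps_*/16$. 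In polar coordinates $(r, \theta)$ about $\bx_*$ (picking the half-disk containing $\bx$ in the interior case), $u$ satisfies the Helmholtz equation on $(0, \eps_*) \times (0, \alpha)$ with homogeneous Dirichlet conditions at $\theta = 0, \alpha$, where $\alpha = \pi$ (interior case) or $\alpha = 2\pi$ (corner case). Since $u$ is H\"older continuous up to $\Gamma$, only the regular separated solutions contribute:
\[
u(r, \theta) = \sum_{n=1}^{\infty} c_n\, J_{\nu_n}(kr)\, \sin(\nu_n \theta), \qquad \nu_n := n\pi/\alpha \in \{n,\ n/2\}.
\]

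\smallskip
\noindent
\emph{Coefficient and pointwise estimates.} Parseval's identity in the orthonormal basis $\{\sqrt{2/\alpha}\,\sin(\nu_n \theta)\}_{n \geq 1}$, multiplied by $r$ and integrated over a radial interval $[R_1, R_2] \subseteq (0, \eps_*]$ with $R_1, R_2$ of order $\eps_*$ chosen so that $|\bx - \bx_*|$ is a small fraction of $R_1$, yields
\[
\sum_{n=1}^\infty |c_n|^2 \int_{R_1}^{R_2} |J_{\nu_n}(kr)|^2\, r \, \rd r = \frac{2}{\alpha}\, \|u\|_{L^2(A)}^2,
\]
where $A \subset B(\bx_*, \eps_*)\setminus \Gamma \subset (\Gamma)_{\eps_*}$ is the annular sector of radii in $[R_1, R_2]$. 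Applying Cauchy--Schwarz to $|u(\bx)| \leq \sum_n |c_n\, J_{\nu_n}(k|\bx - \bx_*|)|$, together with the small-argument Bessel asymptotic $J_\nu(z) = (z/2)^\nu/\Gamma(\nu+1)\cdot (1 + O(z^2))$ (valid because $kr \leq k\eps_* \leq \pi/3$), controls the ratios $J_{\nu_n}(k|\bx - \bx_*|)/J_{\nu_n}(kr) \sim (|\bx - \bx_*|/r)^{\nu_n}$: these form a convergent geometric sum of total $O(1)$, since $\nu_1 > 0$ and $|\bx - \bx_*|/R_1$ is suitably small. The outcome is an inequality of the form
\[
|u(\bx)| \leq \frac{C}{\eps_*}\, \|u\|_{L^2(A)}.
\]

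\smallskip
\noindent
\emph{Conclusion and main obstacle.} Writing $u = u^i + u^s$, we estimate $\|u^i\|_{L^2(A)} \leq |A|^{1/2} \lesssim \eps_*$ (using $|u^i| \equiv 1$) and $\|u^s\|_{L^2(A)} \leq \|u^s\|_{L^2((\Gamma)_{\eps_*})}$; these together produce the $1$ and $\|u^s\|$ terms of the claimed bound after dividing by $\eps_*$. The prefactor $k(1 + (k\ell_{min})^{-1})$ on the $\|u^s\|$ term then comes from $1/\eps_* = \max\{2/\ell_{min},\, 3k/\pi\} \leq (3k/\pi)(1 + (k\ell_{min})^{-1})$. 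The main obstacle is the corner case, where $\nu_n = n/2$ decays in $n$ only half as fast as in the interior case, so the hypothesis $k\eps_* \leq \pi/3$ is essential for uniform-in-$n$ validity of the small-argument Bessel expansion; careful tracking of the constants through the Parseval factor $\sqrt{2/\alpha}$ (worst case $\alpha = 2\pi$, giving $1/\sqrt{\pi}$), the geometric-sum factor $1/(\sqrt{2}-1)$, and the numerical ratio $\eps_*/32$ then yields the explicit prefactor $32/(3\sqrt{\pi}(\sqrt{2}-1))$.
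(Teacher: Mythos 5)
Your strategy is essentially the one the paper uses: a Fourier--Bessel (separation of variables) expansion of $u$ about points of $\Gamma$, with full-angle ($\alpha=2\pi$, $\nu_n=n/2$) expansions at the corners and half-disk ($\alpha=\pi$, $\nu_n=n$) expansions at interior points, coefficients controlled by $\|u\|_{L^2}$ on an annulus of radius comparable to $\eps_*$, the two-sided small-argument bound $\cos z\le J_\nu(z)\Gamma(1+\nu)/(z/2)^\nu\le 1$ for $z\le\pi/2$ (which is exactly why $k\eps_*\le\pi/3$ is imposed), a geometric summation producing the $1/(\sqrt2-1)$ factor, and finally the split $u=u^i+u^s$ with $\|u^i\|_{L^2}$ absorbed into the additive constant. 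The paper bounds each coefficient individually by Cauchy--Schwarz (after averaging the coefficient formula over radii in $[R/2,R]$) rather than invoking Parseval and then Cauchy--Schwarz on the sum, but these are interchangeable.

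There is, however, one step that fails as written: your dichotomy for choosing the expansion centre. In the ``otherwise'' branch you take $\bx_*=\bx_0$ whenever $\bx_0$ is farther than $\eps_*/32$ from every corner, and you then need a radial window $[R_1,R_2]$ on which the half-disk expansion is valid \emph{and} for which $|\bx-\bx_*|/R_1$ is bounded away from $1$. But the half-disk expansion is only valid up to radius $\dist(\bx_0,\{\mbox{corners}\})$, which in this branch can be barely larger than $\eps_*/32$, while $|\bx-\bx_*|$ can be as large as $\eps_*/32$; the ratio $|\bx-\bx_*|/R_1$ then approaches $1$ and your geometric sum is not uniformly bounded. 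The ball $B(\bx_*,\eps_*)$ is in general \emph{not} cut into two half-disks by $\Gamma$ in this case, contrary to your claim, because the segment can terminate inside the ball. The remedy is the covering argument the paper uses: assign the corner expansion (on balls $B_{R_j/4}$ about the endpoints) a much larger catchment region than the target neighbourhood width, and use the interior expansion only for centres at distance greater than $R_j/4$ from both endpoints, with expansion radius $R_j/4$ there; one then checks that the resulting balls cover an $(R_j/32)$-neighbourhood of $\Gamma_j$. With your threshold enlarged accordingly (and the constants re-tracked), the rest of your argument goes through and reproduces the stated bound.
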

\begin{proof}
First pick $j\in \{1,\ldots,n_i\}$ and let $\be_j:=(s_{2j-1},0)$ and $\be_j':=(s_{2j},0)$ denote respectively the left and right endpoints of the segment $\Gamma_j$. Let $(r,\theta)$ be polar coordinates centered at $\be_j$, such that $\Gamma_j$ is described by the set $\{(r,\theta): 0<r<L_j, \,\theta=0 \textrm{ or }\theta=2\pi\}$. 
Then for any $0<R<\ell_{min}$ (so that we avoid the singularities at the endpoints of the segments) the restriction of $u$ to $B_R(\be_j)$ (the ball of radius $R$ centred at $\be_j$) can be written, using separation of variables, as%
\begin{align}
\label{eqn:u_r_theta}
u(r,\theta) = \sum_{n=1}^\infty a_n(R) J_{n/2}(kr)\sin{\left(\frac{n\theta}{2}\right)}, \qquad 0<r<R,\,\,0\leq\theta\leq 2\pi,
\end{align}
where
\begin{align*}
\label{}
a_n(R) = \frac{1}{\pi J_{n/2}(kR)}\int_0^{2\pi} u(R,\theta) \sin{\left(\frac{n\theta}{2}\right)}\,\rd \theta.
\end{align*}
For any $0<\tR<R$ we can derive an identical formula to~\rf{eqn:u_r_theta} with $R$ replaced by $\tR$.  Comparing the two formulae on the ball $B_{\tR}(\be_j)\subset B_{R}(\be_j)$, it follows that, in fact, $a_n(\tR)$ takes the same value (which we denote simply by $a_n$) for all $0\leq \tR \leq R$.  To bound $|a_n|$ we then note that
\begin{align*}
\label{}
\frac{3a_nR^2}{8} = \int_{R/2}^R a_n(\tR)\tR\,\rd \tR = \int_{R/2}^R\int_0^{2\pi} \frac{u(\tR,\theta) \sin{(n\theta/2)}}{\pi J_{n/2}(k\tR)} \tR\,\rd \theta \rd \tR,
\end{align*}
and hence
\begin{align*}
\label{}
|a_n| &\leq \frac{8}{3\pi R^2}\sqrt{\int_{R/2}^R\int_0^{2\pi} \frac{|\sin{(n\theta/2)}|^2}{|J_{n/2}(k\tR)|^2} \tR\,\rd \theta \rd \tR}\,\sqrt{\int_{R/2}^R\int_0^{2\pi} |u(\tR,\theta)|^2 \tR\,\rd \theta \rd \tR}%
&= \frac{8K_n}{3\sqrt{\pi} k R^2} \norm{u}{L^2(A_{R/2,R})},
\end{align*}
$K_n:= \sqrt{\int_{kR/2}^{kR} \frac{z\,\rd z}{|J_{n/2}(z)|^2}}$ and $A_{R/2,R}$ is the annulus defined by $A_{R/2,R}:=\{(r,\theta): R/2<r<R, \,0\leq \theta\leq 2\pi \}$.   
To bound $|K_n|$, we note that (cf.,\ e.g.,\ \citet[(3.12)]{Convex})
\begin{align}
\label{JnuEst}
\cos{z} \leq \frac{J_{\nu}(z)\Gamma(1+\nu)}{(z/2)^{\nu}} \leq 1, \qquad 0\leq z\leq \pi/2, \,\,\nu>-1/2.
\end{align}
where $\Gamma(\cdot)$, in (\ref{JnuEst})--(\ref{anbound}), denotes the Gamma function.
Hence if $0<kR\leq\pi/3$ (so that $1/2\leq \cos{z} \leq 1$ for $kR/2\leq z\leq kR$) then
\begin{align}
\label{Knbound}
|K_n|\leq 2^{1+n/2}\Gamma(1+n/2)\sqrt{\int_{kR/2}^{kR} z^{1-n}\,\rd z}
\leq \frac{2^{1+n}\Gamma(1+n/2)}{\sqrt{n}}(kR)^{1-n/2}.
\end{align}
Thus
\begin{align}
\label{anbound}
|a_n| \leq \frac{2^{4+n}\Gamma(1+n/2)(kR)^{-n/2}}{3\sqrt{\pi} R \sqrt{n}} \norm{u}{L^2(A_{R/2,R})},
\end{align}
and, using \rf{JnuEst} again,%
\begin{align*}
\label{}
|a_n J_{n/2}(kr)| \leq \frac{16(2r/R)^{n/2}}{3\sqrt{\pi} R \sqrt{n}} \norm{u}{L^2(A_{R/2,R})}.
\end{align*}
Then, for $\bx\in B_{R/2}(\be_j)$,
\begin{align*}
\label{}
|u(\bx)|=|u(r,\theta)|\leq \frac{16}{3\sqrt{\pi}R}\sum_{n=1}^\infty (2r/R)^{n/2} \norm{u}{L^2(A_{R/2,R})}
=\frac{16}{3\sqrt{\pi}R}\left(\frac{(2r/R)^{1/2}}{1-(2r/R)^{1/2}}\right)\norm{u}{L^2(A_{R/2,R})}.
\end{align*}
In particular, for $\bx\in B_{R/4}(\be_j)$ we have
\begin{align*}
\label{}
|u(\bx)|\leq \frac{16}{3\sqrt{\pi}(\sqrt{2}-1)R}\norm{u}{L^2(A_{R/2,R})}.
\end{align*}
Recalling that $u=u^i+u^s$, and noting that $\normt{u^i}{L^2(A_{R/2,R})}\leq \sqrt{3\pi}R/2$, this implies that
\begin{align}
\label{xNearEst}
|u(\bx)|\leq \frac{8}{\sqrt{3}(\sqrt{2}-1)}+ \frac{16}{3\sqrt{\pi}(\sqrt{2}-1)R}\norm{u^s}{L^2(A_{R/2,R})},
\qquad \bx\in B_{R/4}(\be_j).
\end{align}
To satisfy both $R\leq \pi/(3k)$ and $R<\ell_{min}$, it suffices to set, e.g., $R=R_j:=\min\{\ell_{min}/2,\pi/(3k)\}$. A similar estimate to \rf{xNearEst} can be obtained in a neighbourhood of the right endpoint $\be_j'$.

Now let $\bx_j$ denote an interior point of $\Gamma_j$ and let $(r,\theta)$ be polar coordinates centered at $\bx_j$, so that $\Gamma_j$ is a subset of the lines $\theta=0$ and $\theta=\pi$. By a similar analysis to that presented above, but with the separation of variables carried out only in a half-disk $0\leq \theta\leq \pi$ or $\pi\leq \theta\leq 2\pi$ and $n/2$ replaced by $n$ etc., we can show that, if $0<R\leq \pi/(3k)$ and $R<\min\{|\bx-\be_j|,|\bx-\be_j'|\}$, then
\begin{align}
\label{xAwayEst}
|u(\bx)|\leq \frac{4\sqrt{2}}{\sqrt{3}}+\frac{16}{3\sqrt{\pi}R}\norm{u^s}{L^2(\tilde{A}_{R/2,R})},
\qquad \bx\in B_{R/4}(\bx_j),
\end{align}
where $\tilde{A}_{R/2,R}:=\{(r,\theta): R/2<r<R, \,0\leq \theta\leq \pi \}$ is a semi-annulus centered at $\bx_j$. 

To combine these results we note that if $\min\{|\bx-\be_j|,|\bx-\be_j'|\}>R_j/4$ then we can take $R=R_j/4$ in \rf{xAwayEst}. Then the union of the balls $B_{R_j/16}(\bx_j)$ over all such $\bx_j$, together with the balls $B_{R_j/4}(\be_j)$ and $B_{R_j/4}(\be_j')$, certainly covers a ($R_j/32$)-neighbourhood of $\Gamma_j$. 
Hence we can conclude that
\begin{align}
\label{}
|u(\bx)|\leq \frac{8}{\sqrt{3}(\sqrt{2}-1)}+ \frac{16}{3\sqrt{\pi}(\sqrt{2}-1)R_j}\norm{u^s}{L^2((\Gamma_j)_{R_j})},
\qquad \bx\in(\Gamma_j)_{R_j/32},
\end{align}
from which the result follows, since $1/R_j \leq 2k(1+(k\ell_{min})^{-1})$.
\end{proof}

To use Lemma \ref{lem:SepVarsInterval} we require an estimate of $\norm{u^s}{L^2((\Gamma)_{\eps_*})}$, %
which is provided by the following result:
\begin{lem}
\label{lem:L2Est}
Let $\eps>0$. Then there exists a constant $C>0$, independent of $\eps$, $k$ and $\Gamma$, such that%
\begin{align}
\label{eqn:L2Est}
\norm{\cS_k\phi}{L^2((\Gamma)_{\eps})} 
&\leq C\sqrt{k\eps}(1+k\eps)k^{-1}\log{(2+(kL)^{-1})}(1+ (kL)^{1/2}) \norm{\phi}{\tilde{H}_k^{-1/2}(\Gamma)}, \quad \phi\in\tilde{H}^{-1/2}(\Gamma).
\end{align}
\end{lem}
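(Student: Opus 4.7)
My plan is to split the tubular neighbourhood $(\Gamma)_\eps$ into a central strip $S:=\tGamma\times(-\eps,\eps)$ lying directly above and below the screen components, and the remaining semicircular endpoint caps $(\Gamma)_\eps\setminus S$. On the strip the trace of $\cS_k\phi$ on $\Gamma$ is $S_k\phi$, so a one-dimensional Poincar\'e-type argument will be effective, whereas on the small caps a pointwise bound suffices.

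On $S$, the identity $\cS_k\phi(x_1,x_2)=S_k\phi(x_1,0)+\int_0^{x_2}\partial_{x_2}\cS_k\phi(x_1,t)\,\rd t$, valid almost everywhere since $\cS_k\phi\in H^1_{\mathrm{loc}}(\R^2)$ with Dirichlet trace $S_k\phi$ on $\Gamma$, yields after Cauchy--Schwarz
\[\|\cS_k\phi\|_{L^2(S)}^2 \le 4\eps\,\|S_k\phi\|_{L^2(\Gamma)}^2 + 2\eps^2\|\partial_{x_2}\cS_k\phi\|_{L^2(\R\times(-\eps,\eps))}^2.\]
I would bound the first term via Lemma~\ref{ThmSNormSmooth} with $s=-1/2$, together with the $k$-dependent embedding $\|f\|_{L^2(\Gamma)}\le k^{-1/2}\|f\|_{H^{1/2}_k(\Gamma)}$, which gives a factor $\sqrt{\eps/k}\,\log(2+(kL)^{-1})(1+\sqrt{kL})\|\phi\|_{\tilde{H}^{-1/2}_k(\Gamma)}$ matching the target bound in the regime $k\eps\lesssim 1$. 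The second term I would bound by Plancherel in $x_1$, using the explicit formula $\widehat{\cS_k\phi}_1(\xi_1,x_2)=\tfrac{\ri}{2\mu(\xi_1)}\re^{\ri\mu(\xi_1)|x_2|}\hat\phi(\xi_1)$ with $\mu(\xi_1):=\sqrt{k^2-\xi_1^2}$, $\mathrm{Im}\,\mu\ge 0$, from which $|\widehat{\partial_{x_2}\cS_k\phi}_1|^2 = \tfrac{1}{4}\,\re^{-2\mathrm{Im}\,\mu\,|x_2|}|\hat\phi(\xi_1)|^2$. Integrating in $x_2$ over $(-\eps,\eps)$ and then partitioning the $\xi_1$-spectrum into the three regions $|\xi_1|<k$, $k<|\xi_1|<\sqrt{k^2+\eps^{-2}}$, and $|\xi_1|\ge\sqrt{k^2+\eps^{-2}}$, and using $1-\re^{-x}\le\min(1,x)$ in each, I expect to obtain $\|\partial_{x_2}\cS_k\phi\|_{L^2(\R\times(-\eps,\eps))}^2\le C(1+k\eps)\|\phi\|_{\tilde{H}^{-1/2}_k(\Gamma)}^2$; multiplication by $\eps^2$ produces the $\sqrt{k\eps^3}$ piece required in the $k\eps\gg 1$ regime.

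For the endpoint caps, whose total area is $O(n_i\eps^2)$, I would use the pointwise estimate $|\cS_k\phi(\bx)|\le\|\Phi_k(\bx,\cdot)\|_{H^{1/2}_k(\Gamma)}\,\|\phi\|_{\tilde{H}^{-1/2}_k(\Gamma)}$ together with Lemma~\ref{lem:H_one_half_norms}(ii), and integrate in polar coordinates around each endpoint. The resulting elementary radial integral $\int_0^\eps (1+(kr)^{-1/2})^2\log^2(2+(kr)^{-1})\,r\,\rd r$ is then estimated by splitting the cases $k\eps\lesssim 1$ and $k\eps\gg 1$; after multiplication by the $L$-dependent prefactor and a comparison between $(1+(kL)^{-1})\log(2+kL)$ and $\log^2(2+(kL)^{-1})(1+kL)$ in the small and large $kL$ regimes, this contribution is absorbed into the right-hand side of the target estimate.

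The main obstacle will be the Plancherel step for $\partial_{x_2}\cS_k\phi$: a naive bound looks non-integrable at the critical frequency $|\xi_1|=k$, where the Helmholtz symbol $\mu$ vanishes, so the correct $(1+k\eps)$ dependence only emerges after the three-region spectral decomposition above and a careful matching against the $(k^2+\xi_1^2)^{-1/2}$ weight appearing in the $\tilde{H}^{-1/2}_k(\Gamma)$ norm. A secondary technical point is aligning the $\log(2+(kL)^{-1})$, $(1+(kL)^{-1})$ and $(1+\sqrt{kL})$ factors coming separately from the strip and cap estimates so that the final bound takes exactly the stated form.
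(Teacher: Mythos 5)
Your route is genuinely different from the paper's. The paper works slice-by-slice in $x_2$: citing the arguments of \citet[Lemma 5.1, Thm 5.2]{ScreenCoercivity}, it bounds $\normt{\cS_k\phi(\cdot,x_2)}{L^2(\tGamma_\eps)}$ for each fixed $|x_2|\leq\eps$ over the \emph{horizontally enlarged} one-dimensional neighbourhood $\tGamma_\eps$ (which covers the endpoint regions automatically), and then simply integrates in $x_2$ and uses $\normt{\phi}{\tilde H^{-1}_k}\leq k^{-1/2}\normt{\phi}{\tilde H^{-1/2}_k}$. Your strip argument (fundamental theorem of calculus in $x_2$ from the trace $S_k\phi$, Lemma~\ref{ThmSNormSmooth} with $s=-1/2$ for the trace term, and the partial-Plancherel computation with symbol $\mu(\xi_1)=\sqrt{k^2-\xi_1^2}$ for the $x_2$-derivative) is correct: I checked that the three-region spectral split does give $\normt{\partial_{x_2}\cS_k\phi}{L^2(\R\times(-\eps,\eps))}^2\leq C(1+k\eps)\normt{\phi}{\tilde H^{-1/2}_k(\Gamma)}^2$, and both strip terms are dominated by the square of the claimed right-hand side. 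This is an attractive, self-contained alternative to the paper's citation of the external slice estimate.

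The genuine gap is in the endpoint caps. Squaring the pointwise bound $|\cS_k\phi(\bx)|\leq\norm{\Phi_k(\bx,\cdot)}{H^{1/2}_k(\Gamma)}\normt{\phi}{\tilde H^{-1/2}_k(\Gamma)}$ and integrating Lemma~\ref{lem:H_one_half_norms}(ii) in polar coordinates, the dominant contribution for $k\eps\leq1$ is
$\int_0^\eps (kr)^{-1}\log^2\bigl(2+(kr)^{-1}\bigr)\,r\,\rd r \sim k^{-2}\,(k\eps)\log^2\bigl(2+(k\eps)^{-1}\bigr)$,
whereas the square of the target bound is $k^{-2}(k\eps)(1+k\eps)^2\log^2(2+(kL)^{-1})(1+(kL)^{1/2})^2$. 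The ratio contains the factor $\log^2\bigl(2+(k\eps)^{-1}\bigr)$, which tends to infinity as $\eps\to0$ with $k$ and $\Gamma$ fixed, so this contribution cannot be ``absorbed'' with a constant independent of $\eps$; the worst-case pointwise bound is simply too lossy to recover the sharp $\sqrt{k\eps}$ rate. (A secondary mismatch: the squared prefactor $(1+(kL)^{-1/2})^2\log(2+kL)$ from Lemma~\ref{lem:H_one_half_norms}(ii) behaves like $(kL)^{-1}$ as $kL\to0$ and is not dominated by $\log^2(2+(kL)^{-1})(1+kL)$, so $\Gamma$-independence would also fail at low frequency.) The natural repair is to dispense with the caps entirely: run your strip argument on $\tGamma_\eps\times(-\eps,\eps)\supset(\Gamma)_\eps$, regarding $\phi\in\tilde H^{-1/2}(\Gamma)\subset\tilde H^{-1/2}(\Gamma')$ for an enlarged screen $\Gamma'\supset\Gamma$ with $\tGamma_\eps\subset\tilde\Gamma'$ and $\diam\Gamma'\leq L+2\eps$, so that the $x_2=0$ trace over $\tGamma_\eps$ is controlled by Lemma~\ref{ThmSNormSmooth} applied to $\Gamma'$; the resulting constants are bounded by those for $L$ up to a harmless factor $(1+\sqrt{2k\eps})\leq\sqrt2(1+k\eps)$. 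With that modification your proof closes.
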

\begin{proof}
Arguing as in the proof of \citet[Lemma~5.1 and Thm~5.2]{ScreenCoercivity}, one can show that for any $\eps>0$
(see also \citet{ScreenCoercivityPaper} for slightly sharper bounds)
\begin{align*}
\label{}
\norm{\cS_k\phi(\cdot,x_2)}{L^2(\tGamma_\eps)} 
\leq C \log{(2+(kA)^{-1})}(1+ (kA)^{1/2} + (k|x_2|)^{1/2}\log{(2+kA)}) \norm{\phi}{\tilde{H}_k^{-1}(\Gamma)},
\end{align*}
where $A=L+\eps$,  $\tGamma_\eps:=\{x\in\R:\dist{(x,\tGamma)}<\eps\}$, $x_2\in \R$, and $C>0$ is independent of $k$, $\Gamma$ and $\eps$. %
From this one can show that 
\begin{align*}
\label{}
\norm{\cS_k\phi(\cdot,x_2)}{L^2((\tGamma)_{\eps})} 
&\leq C (1+k\eps) \log{(2+(kL)^{-1})}(1+ (kL)^{1/2}) \norm{\phi}{\tilde{H}_k^{-1}(\Gamma)}, \quad |x_2|\leq \eps,%
\end{align*}
where again $C$ is independent of $k$, $\Gamma$ and $\eps$. 
The estimate \rf{eqn:L2Est} then follows from integrating over $x_2\in(-\eps,\eps)$ and noting that $\norm{\phi}{\tilde{H}_k^{-1}(\Gamma)}\leq k^{-1/2}\norm{\phi}{\tilde{H}_k^{-1/2}(\Gamma)}$. 
\end{proof}

Combining Lemmas \ref{lem:SepVarsInterval} and \ref{lem:L2Est} gives:
\begin{prop}
\label{cor:pointwise2}
Under the assumptions of Lemma \ref{lem:SepVarsInterval}, we have
\begin{align}
\label{eqn:pointwisebound2}
|u(\bx)|\leq C\left(1+(k\ell_{min})^{-1}\right)\log{(2+(kL)^{-1})}(1+ kL), \qquad \bx\in(\Gamma)_{{\eps}_*/32},
\end{align}
where $C>0$ is independent of $\bx$, $k$ and $\Gamma$.
\end{prop}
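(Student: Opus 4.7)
The plan is to simply chain together the two previous lemmas with the stability estimate \rf{eqn:StabEst} from Lemma~\ref{ThmCoerc}, and then simplify the resulting bound using the fact that $k\eps_* \leq \pi/3$.

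First I would apply Lemma~\ref{lem:SepVarsInterval}, which reduces the task to bounding $\norm{u^s}{L^2((\Gamma)_{\eps_*})}$. Since Theorem~\ref{DirEquivThm} gives $u = u^i - \cS_k\phi$ with $\phi = [\partial u/\partial \bn]$ satisfying $S_k\phi = u^i|_\Gamma$, we have $u^s = -\cS_k\phi$, and Lemma~\ref{lem:L2Est} (with $\eps = \eps_*$) applies directly to bound this $L^2$ norm in terms of $\norm{\phi}{\tilde{H}_k^{-1/2}(\Gamma)}$.

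Next I would bound $\norm{\phi}{\tilde{H}_k^{-1/2}(\Gamma)}$ via the stability estimate \rf{eqn:StabEst}, which gives $\norm{\phi}{\tilde{H}_k^{-1/2}(\Gamma)} \leq 2\sqrt{2}\, \norm{u^i|_\Gamma}{H_k^{1/2}(\Gamma)}$, and then invoke Lemma~\ref{lem:H_one_half_norms}(i) (applied to $\bd$, which has $|\bd|=1$) to conclude that $\norm{u^i|_\Gamma}{H_k^{1/2}(\Gamma)} \leq C(1+\sqrt{kL})$. Combining these yields
\[
\norm{u^s}{L^2((\Gamma)_{\eps_*})} \leq C\sqrt{k\eps_*}(1+k\eps_*)k^{-1}\log(2+(kL)^{-1})(1+\sqrt{kL})(1+\sqrt{kL}).
\]
The definition $\eps_* = \min\{\ell_{min}/2,\pi/(3k)\}$ gives $k\eps_* \leq \pi/3$, so the factors $\sqrt{k\eps_*}$ and $(1+k\eps_*)$ are absorbed into the constant, and $(1+\sqrt{kL})^2 \leq 2(1+kL)$. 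Thus
\[
\norm{u^s}{L^2((\Gamma)_{\eps_*})} \leq C k^{-1}\log(2+(kL)^{-1})(1+kL).
\]

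Finally I would substitute this into \rf{eqn:SepVarEst}. The factor of $k$ in front of the norm cancels the $k^{-1}$, so
\[
|u(\bx)|\leq C\bigl(1 + (1+(k\ell_{min})^{-1})\log(2+(kL)^{-1})(1+kL)\bigr), \qquad \bx\in(\Gamma)_{\eps_*/32},
\]
and since $\log(2+(kL)^{-1})(1+kL)(1+(k\ell_{min})^{-1}) \geq \log 2 > 0$, the additive constant can be absorbed, giving the claimed bound. There is no real obstacle here — the only care needed is to keep track of the multiplicative constants and to observe that $k\eps_*$ is bounded above by a fixed constant so that the $\sqrt{k\eps_*}(1+k\eps_*)$ prefactor from Lemma~\ref{lem:L2Est} does not generate extra $k$-dependence.
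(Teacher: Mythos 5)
Your proposal is correct and follows essentially the same route as the paper's own proof: combine Lemma~\ref{lem:SepVarsInterval} with Lemma~\ref{lem:L2Est} (via $u^s=-\cS_k[\pdonetext{u}{\bn}]$ and $[\pdonetext{u}{\bn}]=S_k^{-1}u^i|_\Gamma$), the stability estimate \rf{eqn:StabEst}, Lemma~\ref{lem:H_one_half_norms}(i), and the observation that $k\eps_*\leq\pi/3$. The bookkeeping with $(1+\sqrt{kL})^2\leq 2(1+kL)$ and the absorption of the additive constant are exactly as needed.
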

\begin{proof}
Noting that $u^s = -\cS_k \left[\pdonetext{u}{\bn}\right]$, and that $\left[\pdonetext{u}{\bn}\right] =S_k^{-1}u^i|_\Gamma$, the result follows from Lemmas \ref{lem:H_one_half_norms}(i), \ref{lem:SepVarsInterval} and \ref{lem:L2Est}, the stability estimate \rf{eqn:StabEst}, and the fact that $k\eps_*\leq \pi/3$.
\end{proof}

The third and final stage in the proof of Lemma \ref{lem:Mu} involves combining Propositions \ref{cor:SolnBound} and \ref{cor:pointwise2} to obtain a bound which holds uniformly throughout $D$. Specifically, we combine \rf{eqn:pointwisebound2}, which holds in the region $d<\eps_*/32$, with \rf{eqn:pointwisebound}, applied in the region $d\geq \eps_*/32$. Noting that in the latter case we have $(kd)^{-1}\leq 32/(k\eps_*)\leq C(1+(k\ell_{min})^{-1})$, we can obtain the following estimate in which the constant $C$ is independent of both $k$ and $\Gamma$: 
\begin{align*}
\label{}
|u(\bx)|\leq C\left(1+\frac{1}{k\ell_{min}}\right)\log \left(1+\frac{1}{k\ell_{min}}\right)(1+kL), \qquad \bx\in D.
\end{align*}
The statement of Lemma \ref{lem:Mu} then follows immediately.

\section{$hp$ approximation space and best approximation results}
\label{ApproxSpace}
Our numerical method for solving the integral equation \rf{BIE_sl} uses a hybrid numerical-asymptotic approximation space based on Theorem~\ref{vpmThm}. Rather than approximating $\phi$ itself using piecewise polynomials (as in conventional methods), we use the decomposition~\rf{Decomp}, with the factors $v_j^+$ and $v_j^-$ replaced by piecewise polynomials. The advantage of our approach is that, as is quantified by Theorem \ref{vpmThm}, the functions $v_j^\pm$ are non-oscillatory (cf.\ Remark \ref{rem:NonOscillatory}), and can therefore be approximated much more efficiently than the full (oscillatory) solution $\phi$. 
Explicitly, the function we seek to approximate is
\begin{equation}
  \varphi(s) := \frac{1}{k}\left(\phi(\bx(s))-\Psi(\bx(s))\right), \quad s\in\tGamma\subset(0,L),
  \label{eqn:varphi0}
\end{equation}
which represents the difference between $\phi$ and its GO approximation~$\Psi$ (recall Remark~\ref{rem:PO}), scaled by $1/k$ so that $\varphi$ is nondimensional (cf.~\cite{Convex}). By~\rf{Decomp} we know that
\begin{align}
\label{eqn:varphi}
\varphi(s) = \frac{1}{k} \left( v_j^+(s-s_{2j-1})\re^{\ri ks} +v_j^-(s_{2j}-s) \re^{-\ri ks} \right), \qquad s\in(s_{2j-1},s_{2j}), 
\,\,j=1,\ldots,n_i,
\end{align}
with the factors $v_j^{\pm}$ enjoying the analyticity properties described in Theorem~\ref{vpmThm}. 
Our hybrid approximation space represents $\varphi$ on each segment $\Gamma_j$ in the form \rf{eqn:varphi}, with the factors $v_j^+$ and $v_j^-$ replaced by piecewise polynomials on overlapping meshes, graded towards the singularities at $s=s_{2j-1}$ and $s=s_{2j}$ respectively. For an illustration of the resulting mesh structure on $\Gamma$ see Figure~\ref{fig:grading}. 
We denote our approximation space by $V_{N,k}\subset \tilde{H}^{-1/2}\left(\Gamma\right)$, where $N$ denotes the total number of degrees of freedom in the method (to be elucidated later), and the subscript $k$ serves to indicate that our hybrid approximation space depends explicitly on the wavenumber~$k$. 

To describe in more detail the meshes we use, we consider the case of a geometric mesh on the interval $[0,l]$, $l>0$, refined towards $0$. The meshes for approximating $v_j^\pm$ on each segment $\Gamma_j$ are constructed from this basic building block by straightforward coordinate transformations. 
Given $n\geq 1$ (the number of layers in the mesh) let $G_n(0,l)$ denote the set of meshpoints $\{x_i\}_{i=0}^n$ defined by
\begin{equation}
      x_0:=0,\quad x_i:=\sigma^{n-i}l,\quad i=1,2,\ldots,n,  
      \label{mesh_graded}
\end{equation}
where $0<\sigma <1$ is a grading parameter. A smaller value of $\sigma$ represents a more severe grading - in all of our experiments we take $\sigma=0.15$, as in \cite{HeLaMe:11}. 
Given a vector $\mathbf{p}\in(\mathbb{N}_0)^n$, let $P_{\mathbf{p},n}(0,l)$ denote the space of piecewise polynomials on the mesh $G_n(0,l)$ with the degree vector $\mathbf{p}$, i.e., 
\[
   P_{\mathbf{p},n}(0,l):=\left\{\rho:[0,l]\rightarrow\mathbb{C}:\rho|_{(x_{i-1},x_i)} \text{ is a polynomial of degree less than or equal to } (\mathbf{p})_i,\, i=1,\ldots,n\right\}.
\]
For reasons of efficiency and conditioning it is common to decrease the order of the approximating polynomials towards the singularity. Specifically, we shall consider degree vectors $\bp$ of the form
\begin{align}
\label{piDef1}
(\bs{p})_i:=
\begin{cases}
p - \left\lfloor \frac{\alpha(n+1-i)}{n}p \right\rfloor,& 1\leq i\leq n-1,\\
p, & i=n,
\end{cases}
\end{align}
for some $\alpha \in [0,1]$ and some integer $p\geq 0$ (the highest polynomial degree on the mesh). The choice $\alpha=0$ corresponds to a constant degree across the mesh (this was the only choice considered in~\cite{HeLaMe:11}), while for $\alpha\in(0,1]$ the degree decreases linearly in the direction of refinement.

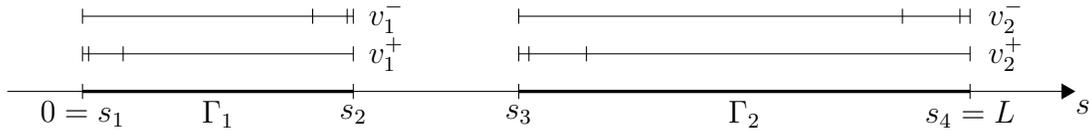
\begin{figure}[t!]
\centering
\begin{tikzpicture}
    \draw[line width=1.25pt] (3,0)--(-3,0);
    \draw (3,0.5)--(-3,0.5);
    \draw (3,1)--(-3,1);
    \foreach \x in {-3,-2.865,-2.1,3}
    {
    \draw (\x ,0.4) -- (\x ,0.6);
    \draw (-\x ,0.9) --(-\x,1.1);
    }
    \path (3.1,1)node[anchor=west]{$v_2^-$};
    \path (3.1,0.5)node[anchor=west]{$v_2^+$};
    \path (0,-0.3)node{$\Gamma_2$};
\begin{scope}[yshift=0.7cm]
    \draw (-3 ,-0.6) -- (-3 ,-0.8);
    \draw (3 ,-0.6) -- (3 ,-0.8);
    \path (-3,-1.0)node{$s_3$};
    \path (3,-1.0)node{$s_4=L$};
\end{scope}
\begin{scope}[x=0.6cm,y=1.0cm,xshift=-7cm]
    \draw[line width=1.25pt] (3,0)--(-3,0);
    \draw (3,0.5)--(-3,0.5);
    \draw (3,1)--(-3,1);
    \foreach \x in {-3,-2.865,-2.1,3}
    {
    \draw (\x ,0.4) -- (\x ,0.6);
    \draw (-\x ,0.9) --(-\x,1.1);
    }
    \path (3.1,1)node[anchor=west]{$v_1^-$};
    \path (3.1,0.5)node[anchor=west]{$v_1^+$};
    \path (0,-0.3)node{$\Gamma_1$};
\begin{scope}[yshift=0.7cm]
    \draw (-3 ,-0.6) -- (-3 ,-0.8);
    \draw (3 ,-0.6) -- (3 ,-0.8);
    \path (-3,-1.0)node{$0=s_1$};
    \path (3,-1.0)node{$s_2$};
\end{scope}
\end{scope}
\begin{scope}[yshift=0.7cm]
    \draw[arrows={-triangle 60}] (-9.8,-0.7)--(4.4,-0.7);
    \path (4.5,-0.9)node{$s$};
\end{scope}
\end{tikzpicture}
\caption{Illustration of the overlapping geometrically graded meshes used to approximate the amplitudes $v_j^\pm$ in \rf{eqn:varphi}, in the case where $\Gamma$ comprises two components, $\Gamma_1$ and $\Gamma_2$. %
}
\label{fig:grading}
\end{figure}

For each $j=1,\ldots,n_i$ let $n_j^\pm\geq 1$ and $\bs{p}_j^\pm\in (\N_0)^{n_j^\pm}$ denote respectively the number of layers and the degree vector associated with the approximation of the factor $v_j^\pm$ in \rf{eqn:varphi}. The total number of degrees of freedom in $V_{N,k}$ is then
\begin{align}
N := \dim(V_{N,k}) =  \sum_{j=1}^{n_i} \left(\sum_{m=1}^{n_j^+}\left((\bs{p}_j^+)_m+1\right)+\sum_{m=1}^{n_j^-}\left((\bs{p}_j^-)_m+1\right) \right).
\label{eqn:dof1}
\end{align}

The regularity results provided by Theorem~\ref{vpmThm} allow us to prove that, under certain assumptions, the best approximation error in approximating $\varphi$ by an element of $V_{N,k}$ decays exponentially as $p$, the maximum degree of the approximating polynomials, increases. 
Our best approximation results in the space $\tilde{H}^{-1/2}(\tGamma)$ are stated in the following theorem, which is the main result of this section.  For simplicity of presentation we assume that the mesh parameters are the same in each of the meshes used to approximate the different components $v_j^\pm$ (similar estimates hold in the more general case).
\begin{thm}
\label{dudnThm}
Let $k\ell_{min}\geq c_0>0$. 
Suppose that $n_j^\pm=n$ and $\bp_j^\pm=\bp$ for each $j=1,\ldots,n_i$, where $n$ and $\bp$ are defined by 
\rf{piDef1} with $n\geq cp$ for some constant $c>0$. 
Then, for any $0<\epsilon<1/2$, there exists a constant $C_3>0$, depending only on $\epsilon$, $\sigma$, $n_i$ and $c_0$, and a constant $\tau>0$, depending only on $\epsilon$, $\sigma$, $\alpha$ and $c$, such that
\begin{align}
\label{BestAppdudn}
\inf_{v\in V_{N,k}}\norm{\varphi-v}{\tilde{H}_k^{-1/2}(\Gamma)}\leq C_3\uM k^{-1}(kL)^\epsilon\,\re^{-p\tau}.%
\end{align}
\end{thm}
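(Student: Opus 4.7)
The proof strategy has three main stages: localization to each screen component, further splitting into left- and right-going oscillatory contributions, and application of classical $hp$-approximation theory for analytic functions with algebraic endpoint singularities, recast in the $\tilde H_k^{-1/2}$ norm.

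First, I would localize. Since $\Gamma = \bigcup_{j=1}^{n_i} \Gamma_j$ with $\ell_{min} > 0$ separating the components, the extension-by-zero map $\tilde H^{-1/2}(\Gamma_j) \hookrightarrow \tilde H^{-1/2}(\Gamma)$ is bounded (with the $k$-dependence of norms controlled, cf.\ the equivalence of the $H^{-1/2}$ and $H_k^{-1/2}$ norms from \S\ref{sec:sobolev}). Since $V_{N,k}$ is a direct sum of independent approximation spaces on each $\Gamma_j$, the triangle inequality gives
\begin{align*}
\inf_{v \in V_{N,k}} \|\varphi - v\|_{\tilde H_k^{-1/2}(\Gamma)} \leq \sum_{j=1}^{n_i} \inf_{v_j} \|\varphi - v_j\|_{\tilde H_k^{-1/2}(\Gamma_j)},
\end{align*}
reducing the problem to bounding the per-segment errors. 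On each $\Gamma_j$, by \rf{eqn:varphi} and linearity, it further suffices to approximate each of the two oscillatory terms $\frac{1}{k}v_j^+(s - s_{2j-1})e^{iks}$ and $\frac{1}{k}v_j^-(s_{2j} - s)e^{-iks}$ separately by functions of the form $\tilde v_j^\pm e^{\pm iks}/k$ with $\tilde v_j^\pm$ a piecewise polynomial on the appropriate geometric mesh.

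Second, I would establish the technical key: a bound of the form
\begin{align*}
\|f(s) e^{\pm iks}\|_{\tilde H_k^{-1/2}(0,L_j)} \leq C (kL_j)^{\epsilon}\, \|f\|_{W}
\end{align*}
where $W$ is a norm amenable to classical approximation theory (likely a weighted $L^2$ norm with weight accommodating the $|s|^{-1/2}$ endpoint singularity, or a combination of $L^2$ away from the singularity and an $L^1$-type or weighted norm near it). This is the main obstacle, because the $|s|^{-1/2}$ singularity of $v_j^\pm$ means $v_j^\pm \notin L^2(\Gamma_j)$, so standard $hp$-theory in $L^2$ does not apply directly. I would approach this via duality: estimate the $\tilde H_k^{-1/2}(\Gamma_j)$ norm by testing against $H^{1/2}(\Gamma_j)$ functions and using the Sobolev embedding $H^{1/2+\epsilon} \hookrightarrow L^\infty$ (on a 1D interval, just short of the critical exponent), which will produce the $(kL)^\epsilon$ factor in \rf{BestAppdudn}. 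The oscillatory factor $e^{\pm iks}$ contributes at most harmlessly provided one uses the wavenumber-dependent norm $\tilde H_k^{-1/2}$ (this is why the $k$-weighted norm is employed throughout the paper).

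Third, I would invoke the classical $hp$-approximation result for analytic functions with algebraic endpoint singularities on geometrically graded meshes (as in the analyses of Schwab, Babu\v{s}ka--Suri, and the approach used in \citet{HeLaMe:11} for polygons). The derivative bounds in Remark \ref{rem:NonOscillatory}, namely $|v_j^{\pm\,(n)}(s)| \leq c_n C_1 \uM k^{1/2} s^{-(n+1/2)}$, place $v_j^\pm$ in the countably normed analytic class with endpoint exponent $-1/2$. Combined with the geometric mesh $G_n(0,L_j)$ with grading $\sigma \in (0,1)$ and linearly varying degree vector \rf{piDef1} satisfying $n \geq cp$, this yields best piecewise polynomial approximants $\tilde v_j^\pm$ with
\begin{align*}
\|v_j^\pm - \tilde v_j^\pm\|_{W} \leq C \uM k^{1/2}\, e^{-p\tau'},
\end{align*}
where $\tau' > 0$ depends on $\sigma$, $\alpha$, $c$, and $\epsilon$. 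Combining with the norm-translation bound above and dividing by $k$ gives the stated estimate \rf{BestAppdudn}, with the prefactor $\uM k^{-1}(kL)^\epsilon$ arising from the product of $\uM k^{1/2}$ (from the $v_j^\pm$ bound), $k^{-1/2}$ from the conversion between norms (via $\tilde H_k^{-1/2}$), the $1/k$ in the definition of $\varphi$, and the $(kL)^\epsilon$ from the Sobolev embedding. Summing the $n_i$ contributions gives the final result.
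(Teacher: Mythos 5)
Your overall architecture is the same as the paper's: split $\varphi$ via \rf{eqn:varphi} into the $2n_i$ terms $v_j^\pm\re^{\pm\ri ks}/k$, approximate each amplitude by piecewise polynomials on the geometric mesh using its analyticity and the $|z|^{-1/2}$ bound from Theorem~\ref{vpmThm}, and pay an $\epsilon$-loss to convert an $L^q$-type error ($q<2$, forced by the non-square-integrable endpoint singularity) into the $\tilde H_k^{-1/2}$ norm; the factor $2n_i$ then appears in $C_3$ exactly as in the paper. Where you differ is in how the two technical ingredients are supplied. For the norm conversion, the paper works on the Fourier side: Hausdorff--Young (Lemma~\ref{FTLpThm}) plus H\"older give the embedding $L^q(\R)\hookrightarrow H_k^{-1/2}(\R)$ with explicit constant $\sim k^{1/q-1}$ (Corollary~\ref{LpHsCor}), and since the $L^q$ norm is unchanged by the unimodular factor $\re^{\pm\ri ks}$ the oscillation is absorbed for free; setting $\epsilon=1/q-1/2$ produces $k^{-1/2}(kl)^\epsilon$. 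Your duality formulation is the dual of this same inequality, but as literally written it does not parse: the dual space of $\tilde H^{-1/2}(\Gamma_j)$ is $H^{1/2}(\Gamma_j)$, which does \emph{not} embed into $L^\infty$ in 1D, so you cannot test against $H^{1/2}$ and invoke $H^{1/2+\epsilon}\hookrightarrow L^\infty$; the correct dual route is $H^{1/2}\hookrightarrow L^{q'}$ for every finite $q'$ together with H\"older against the error in $L^q$. For the $hp$ step, the paper does not appeal to off-the-shelf countably-normed theory (which is formulated in $L^2$/$H^1$ and does not cover the exponent $-1/2$ verbatim); instead Theorem~\ref{HalfPlaneThmMain} constructs the approximant explicitly -- zero on the exponentially small first element $(0,x_1)$, where $s^{-1/2}\in L^q$ gives an exponentially small contribution, and the $L^\infty$-best polynomial approximation on each remaining element via analyticity in Bernstein ellipses -- then sums the geometric series using the degree vector \rf{piDef1} and $n\geq cp$. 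Your plan is recoverable and the bookkeeping of the final prefactor $\uM k^{-1}(kL)^\epsilon$ is consistent, but you should fix the critical-embedding step and make explicit the special treatment of the first mesh element, which is the point where ``classical'' $L^2$-based $hp$ theory genuinely fails here.
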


The proof of Theorem~\ref{dudnThm} occupies the rest of this section.  It relies on a number of intermediate results, which we now state.  The first of these is a standard application of the Riesz-Thorin interpolation theorem \citep[Chapter~V, Theorem~1.3]{StWe:71}.

\begin{lem}%
\label{FTLpThm}
For $1\leq q\leq 2$ the Fourier transform extends uniquely from $L^2(\R)\cap L^1(\R)$ to a bounded linear operator from $L^q(\R)$ to $L^r(\R)$, where $1/q+1/r=1$ (with $r=\infty$ if $q=1$). Furthermore, with $\theta := 2/q-1$, it holds that
\begin{align*}
\label{}
\normt{\hat{\phi}}{L^r(\R)} \leq (2\pi)^{-\theta/2}\norm{\phi}{L^q(\R)}  , \quad \phi\in L^q(\R).
\end{align*}
\end{lem}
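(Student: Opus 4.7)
The plan is to invoke the Riesz--Thorin interpolation theorem applied to the Fourier transform between the two classical endpoint bounds at $q=1$ and $q=2$.

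First, I would verify the two endpoint estimates on the dense subspace $L^1(\R)\cap L^2(\R)$. For $q=1$, $r=\infty$, the triangle inequality applied directly to the integral representation of $\hat\phi$ gives
\[
\normt{\hat\phi}{L^\infty(\R)}\leq (2\pi)^{-1/2}\norm{\phi}{L^1(\R)},
\]
so the operator norm of the Fourier transform on this pair is at most $M_0:=(2\pi)^{-1/2}$. For $q=r=2$, Plancherel's theorem gives $\normt{\hat\phi}{L^2(\R)}=\norm{\phi}{L^2(\R)}$, so the operator norm on this pair is $M_1:=1$.

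Second, I would apply the Riesz--Thorin interpolation theorem with endpoint exponent pairs $(p_0,r_0)=(1,\infty)$ and $(p_1,r_1)=(2,2)$. The interpolation parameter $t\in[0,1]$ solving $1/q=(1-t)+t/2$ and $1/r=t/2$ is $t=2/r$, and Riesz--Thorin then yields, for all $\phi\in L^1(\R)\cap L^2(\R)$,
\[
\normt{\hat\phi}{L^r(\R)}\leq M_0^{1-t}M_1^t\,\norm{\phi}{L^q(\R)}=(2\pi)^{-(1-t)/2}\norm{\phi}{L^q(\R)}.
\]
Using $1/q+1/r=1$, a brief arithmetic check gives $1-t=1-2/r=2/q-1=\theta$, so the constant is precisely $(2\pi)^{-\theta/2}$ as claimed.

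Finally, for the unique extension: $L^1(\R)\cap L^2(\R)$ is dense in $L^q(\R)$ for $1\leq q\leq 2$ (it contains, e.g., $C^\infty_0(\R)$), and by the previous step the Fourier transform, restricted to this subspace and viewed as an operator into $L^r(\R)$, is bounded with respect to the $L^q$ norm. The standard bounded-linear-transformation theorem then supplies a unique continuous extension to all of $L^q(\R)$ with the same operator norm. There is no real analytic obstacle here; the only care needed is to track the interpolation arithmetic relating $q$, $r$, $t$ and $\theta$ so that the constant emerges as $(2\pi)^{-\theta/2}$ rather than some other power of $2\pi$.
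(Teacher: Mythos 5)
Your proposal is correct and follows exactly the route the paper intends: the paper offers no written proof, merely describing the lemma as ``a standard application of the Riesz--Thorin interpolation theorem'' with a citation to Stein--Weiss, and your interpolation between the $L^1\to L^\infty$ bound (constant $(2\pi)^{-1/2}$ under the paper's Fourier convention) and Plancherel is precisely that standard application, with the exponent arithmetic $1-t=2/q-1=\theta$ checked correctly.
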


The following result is essentially stated in \citet[equation~(A.7)]{ChGrLaSp:11}, but we need to restate it here as we are working with a $k$-dependent norm and want $k$-explicit estimates.
\begin{lem}
\label{LpHsThm}
For $1\leq q\leq 2$ and $s<1/2-1/q$, $L^q(\R)$ can be continuously embedded in $H^s(\R)$, with
\begin{align}
\label{LpHsBound}
\normt{\phi}{H^s_k(\R)} \leq c(s,k,\theta)\norm{\phi}{L^q(\R)}  , \quad \phi\in L^q(\R),
\end{align}
where $\theta$ is as defined in Lemma~\ref{FTLpThm} and
\begin{align*}
\label{}
c(s,k,\theta) = \left(\frac{1}{2\pi}\int_{-\infty}^\infty (k^2+\xi^2)^{s/\theta}\,\rd \xi \right)^{\theta/2} .
\end{align*}
\end{lem}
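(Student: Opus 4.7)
The plan is to proceed directly from the definition
$\|\phi\|_{H_k^{s}(\R)}^2 = \int_\R (k^2+\xi^2)^s |\hat\phi(\xi)|^2\,\rd\xi$, and to combine a weighted H\"older inequality in the frequency variable with the Hausdorff--Young-type estimate provided by Lemma~\ref{FTLpThm}. The idea is to peel off the weight $(k^2+\xi^2)^s$ into one H\"older factor while leaving $|\hat\phi|^2$ in the other factor in the $L^{r/2}$-norm, so that the latter reduces to $\|\hat\phi\|_{L^r(\R)}^2$ and hence, via Lemma~\ref{FTLpThm}, to $\|\phi\|_{L^q(\R)}^2$.

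The arithmetic of the exponents is the key step. With $\theta = 2/q - 1 \in [0,1]$ and $1/q + 1/r = 1$, one checks that $\theta + 2/r = (2/q-1) + (2 - 2/q) = 1$, so $1/\theta$ and $r/2$ are H\"older conjugates. Applying H\"older's inequality gives
$$\int_\R (k^2+\xi^2)^s |\hat\phi(\xi)|^2\,\rd\xi \;\leq\; \left(\int_\R (k^2+\xi^2)^{s/\theta}\,\rd\xi\right)^{\!\theta} \|\hat\phi\|_{L^r(\R)}^2,$$
and then Lemma~\ref{FTLpThm} supplies $\|\hat\phi\|_{L^r(\R)}^2 \leq (2\pi)^{-\theta} \|\phi\|_{L^q(\R)}^2$. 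Taking square roots and absorbing the factor $(2\pi)^{-\theta/2}$ into the weight integral produces exactly the constant $c(s,k,\theta)$ as stated. The convergence of $\int_\R (k^2+\xi^2)^{s/\theta}\,\rd\xi$ at infinity requires $2s/\theta < -1$, i.e.\ $s < -\theta/2 = 1/2 - 1/q$, which is precisely the hypothesis.

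I do not expect any serious obstacle, as the argument is entirely a book-keeping exercise once the correct H\"older split is identified. The only mild subtlety is the endpoint $q = 2$, where $\theta = 0$ and the expression $s/\theta$ becomes formal; in that case the inequality reduces to the trivial Plancherel estimate $\|\phi\|_{H_k^s(\R)} \leq k^s \|\phi\|_{L^2(\R)}$ for $s < 0$, using the pointwise bound $(k^2+\xi^2)^s \leq k^{2s}$. The only substantive point to check is that $\theta$ lies in $[0,1]$ so that $1/\theta \geq 1$ and H\"older's inequality is applicable in the required form.
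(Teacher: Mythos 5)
Your proposal is correct and follows essentially the same route as the paper: the paper's proof applies H\"older's inequality with the conjugate pair $1/\theta$ and $r/2$ to $\int_\R (k^2+\xi^2)^s|\hat\phi(\xi)|^2\,\rd\xi$, notes that $s<1/2-1/q$ gives $s/\theta<-1/2$ so the weight integral converges, and then invokes Lemma~\ref{FTLpThm} to pass from $\|\hat\phi\|_{L^r(\R)}$ to $\|\phi\|_{L^q(\R)}$, exactly as you describe (the paper reduces to $\phi\in C_0^\infty(\R)$ by density first, a point worth including). Your remarks on the degenerate endpoint $q=2$ are consistent with, and slightly more explicit than, the paper's treatment.
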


\begin{proof}
By the density of $C_0^\infty(\R)$ in $L^q(\R)$ it suffices to prove \rf{LpHsBound} for $\phi\in C_0^\infty(\R)$. Let $\phi\in C_0^\infty(\R)$, let $1<q\leq 2$ (the case $q=1$ requires an obvious trivial modification of the proof), let $r$ be such that $1/q+1/r=1$, and let $\theta = 2/q-1$ as in Lemma~\ref{FTLpThm}. 
Provided that $s<1/2-1/q$, we have $s/\theta<-1/2$, so that the function $(k^2+\xi^2)$ is in $L^{1/\theta}(\R)$, and H\"older's inequality gives
\begin{align*}
\label{}
\normt{\phi}{H^s_k(\R)}^2 = \int_{\R} (k^2+\xi^2)^s |\hat{\phi}(\xi)|^2 \,\rd \xi
& \leq \left(\int_{\R} (k^2+\xi^2)^{s/\theta} \,\rd \xi\right)^{\theta}  \left(  \int_{\R} (|\hat{\phi}(\xi)|^2)^{r/2} \,\rd \xi\right)^{2/r} \notag \\
& = c(s,k,\theta)^2 (2\pi)^\theta  \normt{\hat{\phi}}{L^r(\R)}^2 \notag \\
& \leq c(s,k,\theta)^2 \normt{\phi}{L^q(\R)}^2,
\end{align*}
the final inequality following from an application of Lemma~\ref{FTLpThm}.
\end{proof}

\begin{cor}
\label{LpHsCor}
For $1<q\leq 2$, $L^q(\R)$ can be continuously embedded in $H^{-1/2}(\R)$ with
\begin{align}
\label{LpHsBoundMinusHalf}
\normt{\phi}{H^{-1/2}_k(\R)} \leq k^{1/q-1} \max \{ 1,1/\sqrt{(2\pi-1)(q-1)}\} \norm{\phi}{L^q(\R)}  , \quad \phi\in L^q(\R).
\end{align}
\end{cor}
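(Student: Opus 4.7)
The plan is to invoke Lemma~\ref{LpHsThm} at $s=-1/2$; the hypothesis $s<1/2-1/q$ is equivalent to $q>1$, which is exactly the range stated in the corollary. Writing out the constant $c(-1/2,k,\theta)$ supplied by the lemma, substituting $\xi=k\eta$ to factor out $k$, and using $\theta-1 = 2(1/q-1)$, I would reduce the statement to bounding the dimensionless quantity $(J(\theta)/(2\pi))^{\theta/2}$, where $\theta = 2/q-1$ and
\[
J(\theta) := \int_\R (1+\eta^2)^{-1/(2\theta)}\,\rd\eta.
\]
The expected prefactor $k^{1/q-1}$ emerges automatically from the Jacobian.

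Next, to estimate $J(\theta)$, I would split the integral at $|\eta|=1$ and use the elementary bounds $(1+\eta^2)^{-1/(2\theta)}\leq 1$ for $|\eta|\leq 1$ together with $(1+\eta^2)^{-1/(2\theta)}\leq|\eta|^{-1/\theta}$ for $|\eta|\geq 1$ (integrable since $\theta<1$). A routine calculation then gives $J(\theta)\leq 2/(1-\theta)$. Rewriting $1-\theta=2(q-1)/q$ and noting $\theta/2=(2-q)/(2q)$, the task reduces to showing that
\[
F(q) := \left(\frac{q}{2\pi(q-1)}\right)^{(2-q)/(2q)} \leq \max\left\{1,\, \frac{1}{\sqrt{(2\pi-1)(q-1)}}\right\}, \qquad q\in(1,2].
\]

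To finish I would split into two regimes. When $q\geq 2\pi/(2\pi-1)$, the base $q/(2\pi(q-1))$ is at most $1$ and the exponent $(2-q)/(2q)$ is nonnegative, so $F(q)\leq 1$ and the first term in the max handles the bound. When $q\in(1,2\pi/(2\pi-1))$, substitute $x=q-1\in(0,1/(2\pi-1))$, take logarithms of $F(q)^2\cdot(2\pi-1)(q-1)\leq 1$, and show that
\[
h(x) := \frac{1-x}{1+x}\log\frac{1+x}{2\pi x} + \log\bigl((2\pi-1)x\bigr) \leq 0.
\]
Direct evaluation gives $h(1/(2\pi-1))=0$ and $h(0^+)=\log((2\pi-1)/(2\pi))<0$, while differentiating gives $h'(x)(1+x)^2 = 3+x-2\log((1+x)/(2\pi x))$, whose $x$-derivative is $1+2/(x(1+x))>0$, so $h'$ changes sign from negative to positive exactly once. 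Hence $h$ is U-shaped on the interval and attains its maximum at an endpoint; both endpoint values are $\leq 0$, giving the desired inequality.

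The main obstacle is the calculus verification in the second regime: the threshold $2\pi/(2\pi-1)$ and the constant $2\pi-1$ in the statement are chosen precisely so that $F(q) = 1/\sqrt{(2\pi-1)(q-1)}$ with equality at $q=2\pi/(2\pi-1)$, which is what makes the monotonicity/U-shape argument land exactly on zero at the right endpoint. The other ingredients---applying Lemma~\ref{LpHsThm}, the change of variables, and the crude split-at-$1$ bound on $J(\theta)$---are essentially mechanical.
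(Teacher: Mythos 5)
Your proposal is correct and follows essentially the same route as the paper: apply Lemma~\ref{LpHsThm} with $s=-1/2$, factor out $k^{1/q-1}$ by the substitution $\xi=k\eta$, split the integral at $1$ to get $\left(\frac{q}{2\pi(q-1)}\right)^{1/q-1/2}$, and then treat the regimes $q\geq q_*:=2\pi/(2\pi-1)$ and $q<q_*$ separately. The only difference is your final calculus/U-shape verification in the second regime, which the paper replaces by the one-line observation that $\frac{q}{2\pi(q-1)}\leq\frac{1}{(2\pi-1)(q-1)}$ for $q\leq q_*$ together with the fact that the exponent $1/q-1/2$ lies in $[0,1/2)$ and the base exceeds $1$ there; your argument is valid but more work than needed.
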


\begin{proof}
The result follows from Lemma~\ref{LpHsThm} with $s=-1/2$, combined with an explicit estimate of $c(-1/2,k,\theta)$. To derive this estimate we first note that
\begin{align}
c(-1/2,k,\theta) = \left(\frac{1}{2\pi}\int_{-\infty}^\infty (k^2+\xi^2)^{-1/(2\theta)} \,\rd \xi\right)^{\theta/2}%
& = k^{-1/2+\theta/2} \left(\frac{1}{\pi}\int_{0}^\infty (1+t^2)^{-1/(2\theta)} \,\rd t\right)^{\theta/2} \notag \\
& \leq k^{-1/2+\theta/2} \left(\frac{1}{\pi}\left( \int_{0}^1 \,\rd t + \int_{1}^\infty t^{-1/\theta} \,\rd t \right)\right) ^{\theta/2} \notag \\
& = k^{-1/2+\theta/2} \left(\frac{1}{\pi(1-\theta)}\right)^{\theta/2} \notag \\
& = k^{1/q-1}\left(\frac{q}{2\pi(q-1)}\right)^{1/q-1/2}.
\label{CEstMinusHalf}
\end{align}
Now define $q_*:=\pi/(\pi-1/2)$. 
For $q_*\leq q \leq 2$ we have that $q/(2\pi(q-1))\leq 1$. For $1<q<q_*$ we can estimate $q/(2\pi(q-1))\leq 1/((2\pi-1)(q-1))$. Inserting these estimates into \rf{CEstMinusHalf}, and noting that $0\leq 1/q -1/2 < 1/2$, we obtain \rf{LpHsBoundMinusHalf}.
\end{proof}

\begin{thm}
\label{HalfPlaneThmMain}
Suppose that a function $g(z)$ is analytic in $\real{z}>0$ and satisfies the bound
\begin{align*}
  |g(z)|\leq \hat{C}|z|^{-1/2}, \quad \real{z}>0,
\end{align*}
for some $\hat{C}>0$. Given $l>0$, $\alpha \in [0,1]$, and integers $n\geq 1$ and $p\geq 0$, let the degree vector $\bs{p}$ be defined by
\rf{piDef1}, and suppose that $n\geq cp$ for some constant $c>0$. Then 
for any $0<\epsilon<1/2$ there exists a constant $C>0$, depending only on $\epsilon$ and $\sigma$ (with $C\to\infty$ as $\epsilon\to 0 $ or $\epsilon\to 1/2$), and a constant $\tau>0$, depending only on $\epsilon$, $\sigma$, $\alpha$ and $c$ (with $\tau\to0$ as $\epsilon\to0$), such that
\begin{align}
\label{}
  \label{bestapphalfplaneCor4MainEps}
 \inf_{v\in \mathcal{P}_{\bp,n}(0,l)}\norm{g-v}{\tilde{H}^{-1/2}_k(0,l)}\leq  C \hat{C} k^{-1/2} (kl)^\epsilon\re^{-p\tau}.
\end{align}
\end{thm}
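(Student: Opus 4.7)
The plan is to reduce the $\tilde H^{-1/2}_k$ best-approximation estimate to an $L^q$ estimate for a suitable $q\in(1,2)$ close to $2$, by invoking Corollary~\ref{LpHsCor}. Any element of $\mathcal{P}_{\bp,n}(0,l)$, extended by zero outside $(0,l)$, belongs to $L^q(\R)$ and is supported in $[0,l]$, so Corollary~\ref{LpHsCor} gives
$$
\norm{g-v}{\tilde H^{-1/2}_k(0,l)} \leq C_q\,k^{1/q-1}\,\norm{g-v}{L^q(0,l)},
$$
with $C_q$ bounded uniformly on any compact subinterval of $(1,2]$. Setting $\epsilon := 1/q-1/2\in(0,1/2)$ makes the prefactor $k^{1/q-1} = k^{-1/2+\epsilon}$, which, combined with an eventual $l^{1/q-1/2}=l^\epsilon$ scaling of the $L^q$ error, produces the claimed $k^{-1/2}(kl)^{\epsilon}$ dependence.

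I would then construct $v$ element-by-element. On the singular element $[0,x_1]$ I take $v=0$; the hypothesis $|g(s)|\leq \hat C s^{-1/2}$ gives
$$
\norm{g}{L^q(0,x_1)} \leq \hat C (1-q/2)^{-1/q}\,x_1^{1/q-1/2} = \hat C (1-q/2)^{-1/q}\,l^{\,\epsilon}\,\sigma^{(n-1)\epsilon},
$$
which, since $n\geq cp$, decays like $\re^{-p\tau_0}$ with $\tau_0 = c\epsilon\log(1/\sigma)$. On each regular element $[x_{i-1},x_i]$ with $i\geq 2$, since $x_{i-1}=\sigma x_i$, the closed Bernstein ellipse $E_\rho$ with foci at $x_{i-1},x_i$ lies in the right half-plane whenever $\rho+1/\rho<2(1+\sigma)/(1-\sigma)$, and a slightly stricter choice of $\rho=\rho(\sigma)>1$ also ensures that $|z|$ is bounded below by a fixed multiple of $x_i$ on $E_\rho$, so $|g(z)|\leq C(\sigma)\hat C x_i^{-1/2}$ there. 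The classical spectral approximation theorem for analytic functions then produces a polynomial $v_i$ of degree $(\bp)_i$ with
$$
\norm{g-v_i}{L^\infty(x_{i-1},x_i)}\leq C(\sigma)\hat C\,x_i^{-1/2}\,\rho^{-(\bp)_i},
$$
and multiplying by $(x_i-x_{i-1})^{1/q}=((1-\sigma)x_i)^{1/q}$ gives $\norm{g-v_i}{L^q(x_{i-1},x_i)}\leq C(\sigma)\hat C\,x_i^{1/q-1/2}\rho^{-(\bp)_i}$.

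The endgame is to sum these contributions. Using $(\bp)_i\geq p-\alpha(n+1-i)p/n$ from~\rf{piDef1} together with $x_i = \sigma^{n-i}l$, the $q$th power of the summed $L^q$ error over the regular elements is bounded by a geometric series in $j = n-i$ with ratio $\sigma^{1-q/2}\rho^{q\alpha p/n}$. The hypothesis $n\geq cp$ bounds $\rho^{\alpha p/n}$ by $\rho^{\alpha/c}$, so the ratio remains strictly less than $1$ provided $\log\rho < (c/\alpha)\epsilon\log(1/\sigma)$ (vacuous when $\alpha=0$); this still leaves an open range of admissible $\rho$ once combined with the Bernstein-ellipse constraint. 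Setting $\tau := \min(\log\rho, \tau_0)$ and bringing back the embedding factor $k^{1/q-1}$ delivers the claimed bound. The main obstacle is the interplay between the two constraints on $\rho$ and the $\epsilon$-dependence of the constants: the embedding constant in Corollary~\ref{LpHsCor} blows up as $q\to 1^+$, i.e.\ as $\epsilon\to 1/2$, while the singular-element constant $(1-q/2)^{-1/q}$ blows up as $q\to 2^-$, i.e.\ as $\epsilon\to 0$, explaining the restriction $0<\epsilon<1/2$; and for $\alpha>0$ the geometric-summability constraint forces $\rho\to 1$ as $\epsilon\to 0$, explaining why $\tau\to 0$ in that limit.
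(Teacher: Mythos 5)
Your proposal follows essentially the same route as the paper's proof: reduction from $\tilde{H}^{-1/2}_k$ to $L^q$ via Corollary~\ref{LpHsCor} with $\epsilon=1/q-1/2$, a zero approximant on the innermost element $(0,x_1)$ bounded directly from $|g(s)|\leq\hat C s^{-1/2}$, Bernstein-ellipse polynomial approximation of the analytic $g$ on the remaining elements, and geometric summation over the mesh. The one point of divergence is the handling of the linearly decreasing degree vector when $\alpha>0$: you constrain the ellipse parameter via $\log\rho<(c/\alpha)\epsilon\log(1/\sigma)$ so that the series ratio stays below one, whereas the paper keeps $\rho=\rho_\sigma$ fixed (depending only on $\sigma$, with $\eta=\log\rho_\sigma$) and instead splits the geometric factor $\re^{-(n+1-i)\vartheta}$ (where $\vartheta=(1/q-1/2)|\log\sigma|$) into two halves, one ensuring summability and the other, combined with the affine lower bound on $(\bp)_i$, giving the uniform decay $\re^{-n\nu}$ and hence $\tau=\min\{c\vartheta,(1-\alpha)\eta+c\vartheta/2,\eta\}$. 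Your variant is correct but makes $\rho$ --- and therefore the constant $C$, through the $1/(\rho-1)$ factor in the analytic approximation bound --- depend on $\alpha$ and $c$ as well as on $\epsilon$ and $\sigma$, and yields a smaller $\tau$ when $\alpha<1$; this is marginally weaker than the stated dependence of $C$ and would propagate into the claimed dependence of $C_3$ in Theorem~\ref{dudnThm}, but it does not affect the substance of the result.
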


\begin{proof}
Our aim is to use Corollary~\ref{LpHsCor} to derive a best approximation error estimate in the $\tilde{H}^{-1/2}_k$ norm in terms of estimates in $L^q$ norms, $1<q< 2$. For the sharpest results (in terms of $k$-dependence) one might want to take $q=2$ in Corollary~\ref{LpHsCor}. However, this is not possible because $g$ cannot be assumed to be square integrable at $s=0$; this is why we assume that $1<q<2$.

We begin by defining a candidate approximant $V\in \mathcal{P}_{\bp,n}(0,l)$, which we take to be zero on $(0,x_1)$, and on $(x_{i-1},x_i)$, $i=2,\ldots,n$, to be equal to the $L^\infty$ best approximation to $g|_{(x_{i-1},x_i)}$ in $\mathcal{P}_{(\bp)_i}(x_{i-1},x_i)$, where $\mathcal{P}_{p}(a,b)$ denotes the space of polynomials of degree less than or equal to $p$ on the interval $(a,b)$. Then by Corollary~\ref{LpHsCor} we have
\begin{align}
  \label{bestapphalfplaneMainPrelim}
  \inf_{v\in \mathcal{P}_{\bp,n}(0,l)}\norm{g-v}{\tilde{H}^{-1/2}_k(0,l)}\leq \norm{g-V}{\tilde{H}^{-1/2}_k(0,l)} \leq  k^{1/q-1} \max \{ 1,1/\sqrt{(2\pi-1)(q-1)}\} \norm{g-V}{L^q(0,l)},
\end{align}
and it simply remains to estimate $\norm{g-V}{L^q(0,l)}$. To do this, we first note that
\begin{align}
\label{phi1Est}
\norm{g-V}{L^q(0,x_1)}\leq \hat{C} \left(\int_0^{x_1} s^{-q/2}\right)^{1/q} = 
\hat{C}D_1l^{1/q-1/2}\re^{-n\vartheta},
\end{align}
where $\vartheta:=(1/q-1/2)|\log{\sigma}|>0$ and  
\begin{align*}
D_1:=
\frac{\sigma^{1/2-1/q}}{(1-q/2)^{1/q}}.
\end{align*}
For any $i=2,\ldots,n$, $g$ is analytic in an ellipse containing the interval $(x_{i-1},x_i)$, and, using standard polynomial approximation results for analytic functions (see e.g.\ \citet[Lemma A.2]{convexpreprint} or \citet[Theorem 2.1.1]{Stenger}), one can show that
\begin{align*}
\norm{g-V}{L^\infty(x_{i-1},x_i)} \leq \hat{C}c_\sigma x_{i-1}^{-1/2}\re^{-(\bp)_i\eta}, \qquad i=2,\ldots,n,
\end{align*}
where $c_\sigma: = 2\sqrt{2}/\rho_\sigma>0$ and $\eta:=\log{\rho_\sigma}>0$, with $\rho_\sigma:=\left(1+\sigma^{1/2}(2-\sigma)^{1/2}\right)/(1-\sigma)$. Hence
\begin{align}
\label{phi2EstSingle}
\norm{g-V}{L^q(x_{i-1},x_i)}& \leq %
\hat{C}c_\sigma (x_i-x_{i-1})^{1/q-1/2}\left(\frac{x_i-x_{i-1}}{x_{i-1}}\right)^{1/2}  \re^{-(\bp)_i\eta}\notag\\
&=\hat{C}c_\sigma ((1-\sigma)l)^{1/q-1/2}\left(\frac{1-\sigma}{\sigma}\right)^{1/2} \re^{-(n-i)\vartheta-(\bp)_i\eta}.
\end{align}
Now, since
\begin{align*}
\label{}
(\bp)_i \geq \left(1-\alpha + \frac{\alpha(i-1)}{n}\right)p, \qquad i=2,\ldots,n,
\end{align*}
the sum
\begin{align*}
S:=\sum_{i=2}^{n}\left(  \re^{-(n-i)\vartheta-(\bp)_i\eta}\right)^q = \re^{q\vartheta} \sum_{i=2}^{n} \re^{-q((n+1-i)\vartheta+(\bp)_i\eta)}
\end{align*}
satisfies the estimate
\begin{align*}
S\leq \re^{q(\vartheta-(1-\alpha)p\eta)} \sum_{i=2}^{n} \re^{-q((n+1-i)\vartheta+(i-1)\mu)},
\end{align*}
where $\mu:=\alpha p\eta/n$. We then write
\begin{align*}
\label{}
(n+1-i)\vartheta + (i-1)\mu= (1/2)(n+1-i)\vartheta +n\psi(i),
\end{align*}
where $\psi(i):=(1/n)\left((n+1-i)\frac{\vartheta}{2}+(i-1)\mu\right)$. Since $\psi(i)$ is affine, we have
\begin{align*}
\label{}
\psi(i) \geq \min{\{\psi(1),\psi(n+1)\}} = \min{\left\{\vartheta/2,\mu\right\}}=:\nu,
\end{align*}
which gives
\begin{align*}
\label{}
S &\leq \re^{q(\vartheta-(1-\alpha)p\eta-n\nu)} \sum_{i=2}^{n} \re^{-(q/2)(n+1-i)\vartheta}
\leq \frac{\re^{q\vartheta/2}}{1-\re^{-q\vartheta/2}}\re^{-q(1-\alpha)p\eta}\re^{-qn\nu}.
\end{align*}
Combining this estimate with \rf{phi1Est} and \rf{phi2EstSingle} gives
\begin{align}
\label{phi2Estq}
\norm{g-V}{L^q(0,l)}\leq \hat{C} l^{1/q-1/2}\left( D_1\re^{-n\vartheta}+ D_2 \re^{-(1-\alpha)p\eta}\re^{-n\nu}\right),
\end{align}
where 
\begin{align*}
\label{}
D_2 := \frac{2\sqrt{2}(1-\sigma)^{1/q}\sigma^{-(1/(2q)-1/4)}}{\sigma(\sqrt{\sigma}+\sqrt{2-\sigma})(1-\sigma^{(1/2-q/4)})^{1/q}}.
\end{align*}
Finally, using \rf{phi2Estq} in \rf{bestapphalfplaneMainPrelim}, assuming $n\geq cp$, and letting $\epsilon:=1/q-1/2\in(0,1/2)$, gives \rf{bestapphalfplaneCor4MainEps} with 
\begin{align*}
\label{}
C = \max \{ 1,1/\sqrt{(2\pi-1)(q-1)}\} (D_1+D_2),%
\qquad
\tau  = \min\{c\vartheta,(1-\alpha)\eta + c\vartheta/2,\eta\}.
\end{align*}
Note that $D_1\to\infty$ and $\vartheta\to0$ as $q\to2$, i.e.\ as $\epsilon\to0$. 
\end{proof}

We are now ready to prove Theorem~\ref{dudnThm}.

\begin{proof}[Proof of Theorem~\ref{dudnThm}]Recalling \rf{eqn:varphi}, the result follows from Theorem~\ref{vpmThm} and Theorem~\ref{HalfPlaneThmMain}, with e.g.\ $C_3=2n_iC_1C$, where $C$ and $\tau$ are the constants obtained from applying Theorem~\ref{HalfPlaneThmMain} to $v_j^\pm$.
\end{proof}

\begin{rem}
We remark that Theorem~\ref{HalfPlaneThmMain} could also be used to achieve best approximation results in $H^{-1/2}$ for the problem of sound-soft scattering by a convex polygon, as considered in~\cite{HeLaMe:11} (where best approximation results are derived only in $L^2$).  %
\end{rem}

\section{Galerkin method}
\label{sec:gal}
Having designed an approximation space $V_{N,k}$ which can efficiently approximate $\varphi$, we now select an element of $V_{N,k}$ using the Galerkin method. That is, we seek $\varphi_N\in V_{N,k}\subset \tilde{H}^{-1/2}\left(\Gamma\right)$ such that (recall~\rf{BIE_sl} and~(\ref{eqn:varphi}))
\begin{equation}
  \left\langle S_k\varphi_N,v\right\rangle_{\Gamma}=\frac{1}{k}\left\langle f-S_k\Psi,v\right\rangle_{\Gamma},
 \quad\textrm{for all } v\in V_{N,k}.
  \label{eqn:gal}
\end{equation}
We note that since $\varphi_N,v\in V_{N,k}\subset L^2(\Gamma)$ the duality pairings in \rf{eqn:gal} can be evaluated simply as inner products in $L^2(\Gamma)$ (see the discussion after~(\ref{DualDef}) and the implementation details in \S\ref{sec:num}). %
Existence and uniqueness of the Galerkin solution $\varphi_N$ is guaranteed by the Lax-Milgram Lemma and Lemmas \ref{ThmSNormSmooth} and \ref{ThmCoerc}. Furthermore, by C\'{e}a's lemma we have the quasi-optimality estimate
\begin{equation}
  \norm{\varphi-\varphi_N}{\tilde{H}^{-1/2}_k(\Gamma)}\leq\frac{C_0 (1+\sqrt{kL})}{2\sqrt{2}}\inf_{v\in V_{N,k}}\norm{\varphi-v}{\tilde{H}^{-1/2}_k(\Gamma)}, %
\label{eqn:quasi-opt}
\end{equation}
where $C_0$ is the constant from Lemma~\ref{ThmSNormSmooth}. Combined with Theorem \ref{dudnThm}, this gives: 
\begin{thm}
\label{GalerkinThm}
Under the assumptions of Theorem~\ref{dudnThm}, we have
\begin{align}
\label{GalerkinErrorEst}
\norm{\varphi-\varphi_N}{\tilde{H}^{-1/2}_k(\Gamma)}\leq C_4\uM  k^{-1}(1+(kL)^{1/2+\epsilon})\,\re^{-p\tau}, %
\end{align}
where $C_4 = C_0C_3 /\sqrt{2}$ and $C_3$ is the constant from Theorem \ref{dudnThm}. %
Combined with Lemma \ref{lem:Mu} this implies the following $k$-explicit estimate, in which $C_5=3C_4C_2$: %
\begin{align}
\label{GalerkinErrorEst_kexplicit}
\norm{\varphi-\varphi_N}{\tilde{H}^{-1/2}_k(\Gamma)}\leq C_5k^{-1}(1+(kL)^{3/2+\epsilon})\,\re^{-p\tau}. %
\end{align}
\end{thm}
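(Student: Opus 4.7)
The plan is to chain together the quasi-optimality estimate \eqref{eqn:quasi-opt}, the best approximation bound from Theorem~\ref{dudnThm}, and (for the second inequality) the bound on $\uM$ from Lemma~\ref{lem:Mu}. All of the substantive analysis has already been done; what remains is essentially bookkeeping plus two elementary inequalities to tidy the powers of $kL$.

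First, I would invoke the quasi-optimality estimate \eqref{eqn:quasi-opt}, which follows from C\'ea's lemma in the functional-analytic setting provided by the $k$-explicit continuity (Lemma~\ref{ThmSNormSmooth}) and coercivity (Lemma~\ref{ThmCoerc}) of $S_k$ on $\tilde{H}^{-1/2}(\Gamma)$. Substituting the best approximation estimate \eqref{BestAppdudn} from Theorem~\ref{dudnThm} for the infimum immediately gives
\begin{align*}
\norm{\varphi-\varphi_N}{\tilde{H}^{-1/2}_k(\Gamma)}\leq \frac{C_0C_3}{2\sqrt{2}}\uM k^{-1}(1+\sqrt{kL})(kL)^\epsilon\,\re^{-p\tau}.
\end{align*}
To bring this into the form \eqref{GalerkinErrorEst} with $C_4=C_0C_3/\sqrt{2}$, I would verify the elementary inequality $(1+\sqrt{kL})(kL)^\epsilon\leq 2\bigl(1+(kL)^{1/2+\epsilon}\bigr)$ by splitting into the two cases $kL\leq 1$ (where the left side is at most $2$) and $kL>1$ (where the left side is at most $2(kL)^{1/2+\epsilon}$), then absorb the factor of $2$ into $C_4$.

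For the $k$-explicit estimate \eqref{GalerkinErrorEst_kexplicit}, I would substitute the bound $\uM\leq C_2(1+kL)$ from Lemma~\ref{lem:Mu} into \eqref{GalerkinErrorEst} and then apply the second elementary inequality $(1+kL)\bigl(1+(kL)^{1/2+\epsilon}\bigr)\leq 3\bigl(1+(kL)^{3/2+\epsilon}\bigr)$, again by splitting on the size of $kL$, to recover the stated constant $C_5=3C_4C_2$.

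The hard part will be\ldots nothing, really: the deep content is already packaged in Theorem~\ref{dudnThm} (best approximation in $\tilde{H}^{-1/2}$, whose proof required the analyticity/regularity results of Section~\ref{sec:reg} and the Riesz--Thorin-based embedding $L^q\hookrightarrow \tilde{H}^{-1/2}$) and in the coercivity bound Lemma~\ref{ThmCoerc}, both of which are assumed. The only care needed is in tracking the constants and in arranging the two elementary inequalities so that the exponents of $kL$ match those in \eqref{GalerkinErrorEst} and \eqref{GalerkinErrorEst_kexplicit}.
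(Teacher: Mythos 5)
Your proposal is correct and follows exactly the route the paper takes: the paper obtains \eqref{GalerkinErrorEst} by combining the quasi-optimality estimate \eqref{eqn:quasi-opt} (from C\'ea's lemma via Lemmas~\ref{ThmSNormSmooth} and~\ref{ThmCoerc}) with the best approximation bound \eqref{BestAppdudn}, and then \eqref{GalerkinErrorEst_kexplicit} by inserting Lemma~\ref{lem:Mu}. Your two elementary case-splitting inequalities for the powers of $kL$ are valid and correctly recover the stated constants $C_4=C_0C_3/\sqrt{2}$ and $C_5=3C_4C_2$; the paper simply leaves this bookkeeping implicit.
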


An approximation $u_N$ to the solution $u$ of the BVP $\sP$ can be found by inserting the approximation $[\partial u/\partial\bn] \approx \Psi+ k \varphi_N$ into the formula~\rf{eqn:D_RepThm}, i.e.
\begin{align*}
  u_N(\bx) := u^i(\bx) - \int_{\Gamma}\Phi_k(\bx,\by)\left(\Psi(\by) +  k\varphi_N(\by)\right)\,\rd s(\by), \qquad \bx\in D.
\end{align*}
We then have the following (nonuniform) error estimate:
\begin{thm}
\label{thm:domainerror}
Under the assumptions of Theorem~\ref{dudnThm}, we have, for $\bx\in D$ and $d=\dist{(\bx,\Gamma)}$,
\begin{align}
\label{GalerkinErrorDomain}
\frac{|u(\bx)-u_N(\bx)|}{\norm{u}{L^\infty(D)}} \leq C_6 \left(1\!+\!\frac{1}{\sqrt{kd}}\right)\log\left( 2+\frac{1}{kd}\right) \log^{1/2}(2+kL)(1\!+\!(kL)^{1/2+\epsilon})\,\re^{-p\tau}, %
\end{align}
where $C_6=(1+1/\sqrt{k\ell_{min}})C_4C$, and $C$ is the constant from Lemma \ref{lem:H_one_half_norms}(ii).
\end{thm}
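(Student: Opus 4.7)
\medskip

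The plan is to exploit the representation formula \rf{eqn:D_RepThm} together with the Galerkin error estimate from Theorem~\ref{GalerkinThm} and the pointwise duality bound implicit in Lemma~\ref{lem:H_one_half_norms}(ii). First I would observe that, since $\phi=[\partial u/\partial\bn]=\Psi+k\varphi$ by the definition \rf{eqn:varphi0} of $\varphi$, combining the representation formula \rf{eqn:D_RepThm} with the definition of $u_N$ yields the clean identity
\[
 u(\bx)-u_N(\bx)=-k\,\cS_k(\varphi-\varphi_N)(\bx),\qquad \bx\in D.
\]
This reduces the task to bounding $|\cS_k(\varphi-\varphi_N)(\bx)|$ pointwise in $\bx\in D$.

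Next I would use the definition $\cS_k\psi(\bx)=\langle \Phi_k(\bx,\cdot),\overline{\psi}\rangle_\Gamma$. Since $\bx\in D$ means $\bx\notin\overline{\Gamma}$, the fundamental solution $\Phi_k(\bx,\cdot)$ is smooth on $\Gamma$ and in particular lies in $H^{1/2}(\Gamma)$, so the duality estimate on $H^{1/2}(\Gamma)\times\tilde H^{-1/2}(\Gamma)$ gives
\[
 |\cS_k(\varphi-\varphi_N)(\bx)|\leq \norm{\Phi_k(\bx,\cdot)}{H^{1/2}_k(\Gamma)}\,\norm{\varphi-\varphi_N}{\tilde H^{-1/2}_k(\Gamma)}.
\]
The first factor is handled by Lemma~\ref{lem:H_one_half_norms}(ii); the second factor is controlled by the Galerkin estimate \rf{GalerkinErrorEst} of Theorem~\ref{GalerkinThm}, which (under the hypotheses inherited from Theorem~\ref{dudnThm}) supplies
\[
 \norm{\varphi-\varphi_N}{\tilde H^{-1/2}_k(\Gamma)}\leq C_4\,\uM\,k^{-1}(1+(kL)^{1/2+\epsilon})\re^{-p\tau}.
\]

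Substituting both bounds and multiplying by $k$, the factors $k$ and $k^{-1}$ cancel, and $\uM=\norm{u}{L^\infty(D)}$ by the definition of $\uM$, so dividing through produces
\[
 \frac{|u(\bx)-u_N(\bx)|}{\norm{u}{L^\infty(D)}}\leq C\,C_4\left(1+\tfrac{1}{\sqrt{kL}}\right)\!\left(1+\tfrac{1}{\sqrt{kd}}\right)\log\!\left(2+\tfrac{1}{kd}\right)\log^{1/2}(2+kL)\,(1+(kL)^{1/2+\epsilon})\,\re^{-p\tau}.
\]
To arrive at the stated form, I would finally absorb the prefactor $(1+1/\sqrt{kL})$ into the constant by using $L\geq \ell_{min}$ (which follows from $L=\sum_{m=1}^{2n_i-1}(s_{m+1}-s_m)\geq \ell_{min}$), so $1+1/\sqrt{kL}\leq 1+1/\sqrt{k\ell_{min}}$; this is exactly the composition embedded in the definition of $C_6=(1+1/\sqrt{k\ell_{min}})C_4C$.

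There is no genuine obstacle to overcome: every ingredient is already in place. The only point requiring a little care is verifying that $\Phi_k(\bx,\cdot)$ on $\Gamma$ can legitimately serve as the test element in the duality pairing against $\varphi-\varphi_N\in\tilde H^{-1/2}(\Gamma)$, and the cleanest way to justify this is simply to note that $\bx\in D$ keeps $\Phi_k(\bx,\cdot)$ uniformly smooth on a neighbourhood of $\Gamma$, hence the pairing agrees with the integral representation used to define $\cS_k$ on $L^p$-densities.
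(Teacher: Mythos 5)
Your argument is correct and is essentially identical to the paper's proof: the identity $u-u_N=-k\,\cS_k(\varphi-\varphi_N)$, the duality bound against $\Phi_k(\bx,\cdot)\in H^{1/2}(\Gamma)$, Lemma~\ref{lem:H_one_half_norms}(ii), and the Galerkin estimate \rf{GalerkinErrorEst} are exactly the ingredients used there. Your extra remark absorbing $(1+1/\sqrt{kL})$ via $L\geq\ell_{min}$ correctly accounts for the stated form of $C_6$.
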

\begin{proof}
Noting that $|u(\bx)-u_N(\bx)| = k |\cS_k(\varphi-\varphi_N)(\bx)| \leq k \norm{\Phi_k(\bx,\cdot)}{H^{1/2}_k(\Gamma)}\norm{\varphi-\varphi_N}{\tilde{H}^{-1/2}_k(\Gamma)}$, the result follows from Lemma~\ref{lem:H_one_half_norms}(ii) and \rf{GalerkinErrorEst}.
\end{proof}

An object of interest in applications is the \emph{far field pattern} of the scattered field. An asymptotic expansion of the representation~\rf{eqn:D_RepThm} reveals that (cf.~\cite{CoKr:92})%
\begin{equation*}
  u^s(\bx) \sim \dfrac{\re^{\ri\pi/4}}{2\sqrt{2\pi}}\dfrac{\re^{\ri kr}}{\sqrt{kr}}F(\hat{\bx}), \quad \mbox{as }r:=|\bx|\rightarrow\infty,
\end{equation*}
where $\hat{\bx}:=\bx/|\bx|\in \mathbb{S}^1$, the unit circle, and
\begin{align}
  \label{FDef}
  F(\hat{\bx}):=-\int_\Gamma \re^{-\ri k\hat{\bx}\cdot \by}\left[\pdone{u}{\bn}\right]\!(\by)\,\rd s(\by), \qquad \hat{\bx}\in \mathbb{S}^1.
\end{align}
An approximation $F_N$ to the far field pattern $F$ can be found by inserting the approximation $[\partial u/\partial\bn] \approx \Psi + k\varphi_N$ into the formula~\rf{FDef}, i.e.
\begin{equation}
  F_N(\hat{\bx}):=-\int_\Gamma \re^{-\ri k\hat{\bx}\cdot \by} \left(\Psi(\by)+ k\varphi_N(\by)  \right) \,\rd s(\by), \qquad \hat{\bx}\in \mathbb{S}^1.
  \label{eqn:FFP_approx}
\end{equation}

\begin{thm}
\label{FarFieldThm} Under the assumptions of Theorem~\ref{dudnThm} we have
\begin{align}
  \label{FarFieldErrorEst}
  \norm{F-F_N}{L^\infty(\mathbb{S}^1)}\leq C_7(1+(kL)^{2+\epsilon})\,\re^{-p\tau}, %
\end{align}
where $C_7=3C_5C$ and $C$ is the constant from Lemma~\ref{lem:H_one_half_norms}(i).
\end{thm}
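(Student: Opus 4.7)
The plan is to express $F - F_N$ as a duality pairing of a plane wave against $\varphi - \varphi_N$, and then bound each factor using results already established.

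Specifically, first I would substitute $\phi = [\partial u/\partial\bn] = \Psi + k\varphi$ (from the definition~\rf{eqn:varphi0} of $\varphi$) into~\rf{FDef} and compare with~\rf{eqn:FFP_approx} to obtain, for each $\hat{\bx}\in\mathbb{S}^1$,
\begin{align*}
F(\hat{\bx})-F_N(\hat{\bx}) = -k\int_\Gamma \re^{-\ri k\hat{\bx}\cdot\by}(\varphi-\varphi_N)(\by)\,\rd s(\by).
\end{align*}
Since $\varphi-\varphi_N\in \tilde H^{-1/2}(\Gamma)$ (by construction $\varphi_N\in V_{N,k}\subset L^2(\Gamma)\subset \tilde H^{-1/2}(\Gamma)$ and $\varphi\in\tilde H^{-1/2}(\Gamma)$ since $\phi,\Psi\in \tilde H^{-1/2}(\Gamma)$), this integral is really a duality pairing, and the bound given right after~\eqref{isdual} yields
\begin{align*}
|F(\hat{\bx})-F_N(\hat{\bx})| \leq k\,\normt{\re^{-\ri k\hat{\bx}\cdot(\cdot)}}{H^{1/2}_k(\Gamma)}\,\normt{\varphi-\varphi_N}{\tilde H^{-1/2}_k(\Gamma)}.
\end{align*}

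Next, since $|{-\hat{\bx}}|=1$, Lemma~\ref{lem:H_one_half_norms}(i) bounds the plane-wave norm by $C(1+\sqrt{kL})$, uniformly in $\hat{\bx}\in\mathbb{S}^1$. For the Galerkin error factor I would invoke the $k$-explicit estimate~\rf{GalerkinErrorEst_kexplicit} from Theorem~\ref{GalerkinThm}, which gives $\normt{\varphi-\varphi_N}{\tilde H^{-1/2}_k(\Gamma)}\leq C_5 k^{-1}(1+(kL)^{3/2+\epsilon})\re^{-p\tau}$. Combining the three factors, the $k$ and $k^{-1}$ cancel, leaving
\begin{align*}
\norm{F-F_N}{L^\infty(\mathbb{S}^1)} \leq CC_5(1+\sqrt{kL})(1+(kL)^{3/2+\epsilon})\re^{-p\tau}.
\end{align*}

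Finally, I would simplify the polynomial prefactor. Expanding, $(1+\sqrt{kL})(1+(kL)^{3/2+\epsilon}) = 1 + \sqrt{kL}+(kL)^{3/2+\epsilon}+(kL)^{2+\epsilon}$. Splitting into the regimes $kL\leq 1$ (where all four terms are bounded by $1$) and $kL\geq 1$ (where each of the middle two terms is dominated by $(kL)^{2+\epsilon}$) easily yields the uniform bound $(1+\sqrt{kL})(1+(kL)^{3/2+\epsilon})\leq 3(1+(kL)^{2+\epsilon})$, which delivers~\rf{FarFieldErrorEst} with $C_7=3CC_5$ as stated. There is no real obstacle here: all the heavy lifting has already been done in Theorem~\ref{GalerkinThm} and Lemma~\ref{lem:H_one_half_norms}(i), and the only care needed is the polynomial bookkeeping at the end to combine the $(1+\sqrt{kL})$ and $(1+(kL)^{3/2+\epsilon})$ factors into the single factor $1+(kL)^{2+\epsilon}$.
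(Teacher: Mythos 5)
Your proposal is correct and follows exactly the paper's proof: write $F-F_N$ as $k$ times the duality pairing of the plane wave with $\varphi-\varphi_N$, bound the pairing by the product of the $H^{1/2}_k$ and $\tilde H^{-1/2}_k$ norms, and invoke Lemma~\ref{lem:H_one_half_norms}(i) together with \rf{GalerkinErrorEst_kexplicit}. The only difference is that you spell out the elementary bookkeeping $(1+\sqrt{kL})(1+(kL)^{3/2+\epsilon})\leq 3(1+(kL)^{2+\epsilon})$, which the paper leaves implicit.
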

\begin{proof}
Noting that $|F(\hat{\bx})-F_N(\hat{\bx})| = k|\langle \re^{-\ri k\hat{\bx}\cdot(\cdot)},\varphi-\varphi_N\rangle_\Gamma| \leq k \normt{\re^{-\ri k\hat{\bx}\cdot(\cdot)}}{H^{1/2}_k(\Gamma)}\norm{\varphi-\varphi_N}{\tilde{H}^{-1/2}_k(\Gamma)}$, the result follows from Lemma~\ref{lem:H_one_half_norms}(i) and~\rf{GalerkinErrorEst_kexplicit}.
\end{proof}

Similarly, approximations $u'_N$ and $F_N'$ to the solution $u'$ of the BVP $\sP'$ and the far-field pattern $F'$ associated with the diffracted field $u^d$ can be found using \rf{eqn:Dp_RepThm}, and estimates similar 
to \rf{GalerkinErrorDomain} and \rf{FarFieldErrorEst} can be proved. 

\begin{rem} The algebraically $k$-dependent factors in the error estimates~\rf{GalerkinErrorEst}, \rf{GalerkinErrorEst_kexplicit}, \rf{GalerkinErrorDomain} and~\rf{FarFieldErrorEst} can be absorbed into the exponentially decaying factors by allowing $p$ to grow in proportion to $\log{k}$ as $k\to\infty$ (cf.\ \cite[Rem.\ 6.5]{HeLaMe:11}). Therefore, since $N\sim p^2$, our estimates show that to maintain any desired accuracy it is sufficient to increase $N$ in proportion to $\log^2{k}$ as $k\to\infty$, as claimed in \S\ref{Intro}. In fact, our numerical results in \S\ref{sec:num} below suggest that in practice this logarithmic increase is not required, and that when computing $u$ or $F$ with a fixed number of degrees of freedom, accuracy actually improves as frequency increases.
\end{rem}
\section{Numerical results}
\label{sec:num}
We present numerical computations of the Galerkin approximation $\varphi_N$, as defined by~(\ref{eqn:gal}), for the screen/aperture shown in Figures~\ref{fig:screen_domain} and ~\ref{fig:aperture_domain}. 
Our results confirm our theoretical predictions, demonstrating the efficacy and efficiency of our method, and its robustness across a wide range of frequencies. %

The screen $\Gamma$ we consider has multiple components of different lengths and different separations, and is defined explicitly by \rf{eqn:GammaDef} with $n_i=5$, $s_1=0$, $s_2=2\pi$, $s_3=21\pi/10$, $s_4=5\pi/2$, $s_5=14\pi/5$, $s_6=7\pi/2$, $s_7=4\pi$, $s_8=6\pi$, $s_9=61\pi/10$ and $s_{10}=10\pi$. Hence $L_1=2\pi$, $L_2=2\pi/5$, $L_3=7\pi/10$, $L_4=2\pi$ and $L_5=39\pi/10$, so that the smallest component has length $2\pi/5$, the longest has length $39\pi/10$, and the sum of the length of all of the components is $\sum_{i=1}^{n_i}L_j = 9\pi = 9k\lambda/2$ (where $\lambda=2\pi/k$ is the wavelength). We present results below for values of $k$ ranging from $k=10$ (in which case the smallest segment is two wavelengths long) up to $k=10240$ (in which case the longest segment is nearly 20000 wavelengths long).  
The plots in Figures~\ref{fig:screen_domain} and~\ref{fig:aperture_domain} show the total fields for the ``non-grazing'' incident direction $\bd=(1/\sqrt{2}, -1/\sqrt{2})$; in our examples below we also consider the ``grazing'' incident direction $\bd=(1, 0)$.

To describe our implementation of the HNA approximation space of \S\ref{ApproxSpace}, 
we write $\varphi_N\in V_{N,k}$ as 
\begin{equation}
  \varphi_N(\cdot) = \sum_{\ell=1}^N v_{\ell} \chi_{\ell}(\cdot),
  \label{eqn:varphi_N}
\end{equation}
where $N$ is given by~(\ref{eqn:dof1}), $v_{\ell}$, $\ell=1,\ldots,N$, are the unknown coefficients to be determined, and $\chi_{\ell}$, 
$\ell=1,\ldots,N$, are the HNA basis functions, which we now define. Each basis function $\chi_{\ell}$ is supported on an interval $(a,b)\subset(s_{2j-1},s_{2j})$ for some $j\in\{1,\ldots,n_i\}$, and takes the form
\[\chi_{\ell}(s)= \sqrt{\frac{2q+1}{b-a}} P_q\left(2\left(\frac{s-a}{b-a}\right)-1\right)\re^{\rho \ri k s}, \quad s\in (a,b), \]
where $P_q$, $q\leq p$, denotes the Legendre polynomial of order $q$, and either $\rho=1$ and $a=s_{2j-1}+x_m$, $b=s_{2j-1}+x_{m+1}$, for some $j\in\{1,\ldots,n_i\}$ and $m\in\{0,\ldots,n_j^+\}$ (in which case $\chi_{\ell}$ is one of the basis functions used to approximate the amplitude $v_j^+$ in \rf{eqn:varphi} (see Figure~\ref{fig:grading})), or else $\rho=-1$ and $a=s_{2j}-x_{m+1}$, $b=s_{2j}-x_{m}$, for some $j\in\{1,\ldots,n_i\}$ and $m\in\{0,\ldots,n_j^-\}$ (in which case $\chi_{\ell}$ is one of the basis functions used to approximate the amplitude $v_j^-$ in \rf{eqn:varphi} (again, see Figure~\ref{fig:grading})), with $x_m$, $m=0,\ldots,n_j^{\pm}$ defined as in \rf{mesh_graded}. 
This choice means that $\left\langle \chi_j,\chi_j\right\rangle_{\Gamma}=1$, $j=1,\ldots,N$, and that $\left\langle \chi_j,\chi_m\right\rangle_{\Gamma}=0$, $j\neq m$, unless $\chi_j$ and $\chi_m$ are supported on non-identical overlapping intervals.  Substituting~(\ref{eqn:varphi_N}) into~(\ref{eqn:gal}) produces the linear system
\begin{equation}
  \sum_{\ell=1}^{N} \left\langle S_k\chi_{\ell},\chi_m\right\rangle_{\Gamma} v_{\ell} =\frac{1}{k}\left\langle f-S_k\Psi,\chi_m\right\rangle_{\Gamma},
 \quad m=1,\ldots,N.
  \label{eqn:linsys}
\end{equation}
To construct \rf{eqn:linsys} it is necessary to evaluate oscillatory integrals.  Due to the linear nature of the screen, these are computed efficiently using Filon quadrature, as described in \citet[\S4]{ChGrLaSp:11} and \citet[\S4]{Tw:13}. 
In all our experiments we take $\alpha=1$ and $n_j^+=n_j^-=2(p+1)$, $j=1,\ldots,n_i$.  Experiments in \citet[\S3]{Tw:13} for the case $n_i=1$ suggest that, for the examples considered therein, these choices are appropriate in terms of attempting to minimise the number of degrees of freedom required to achieve a prescribed level of accuracy.  Using~\rf{eqn:dof1}, the total number of degrees of freedom is then: $N=20$ for $p=0$, $N=70$ for $p=1$, $N=130$ for $p=2$, $N=220$ for $p=3$, $N=320$ for $p=4$, $N=450$ for $p=5$, $N=590$ for $p=6$, and $N=760$ for $p=7$ (these values are the same for all values of $k$). In each case, the linear system \rf{eqn:linsys} is inverted using a standard direct solver.

Theorem~\ref{GalerkinThm} predicts exponential decay of $\norm{\varphi-\varphi_N}{\tilde{H}_k^{-1/2}(\Gamma)}$ as $p$ increases, for fixed $k$, and moreover that increasing $p$ proportionally to $\log k$ as $k$ increases should be sufficient to maintain accuracy.  In practice, it is not straightforward to compute $\norm{\cdot}{\tilde{H}_k^{-1/2}(\Gamma)}$; instead, we compute $\|\cdot\|_{\Gamma}$, defined by
\begin{align*}
\label{}
\|\phi\|_{\Gamma}:= \sqrt{|\langle S_k\phi,\phi \rangle_{\Gamma}|}, \qquad \phi\in\tilde H^{-1/2}(\Gamma),
\end{align*}
which defines an equivalent norm on $\tilde H^{-1/2}(\Gamma)$ and is easier to compute (see, e.g., the discussion in \citet[pp.~A:29--A:30]{SmArBePhSc:13}).  Specifically, it follows from \rf{SEst2D} and~\rf{SCoercive} that
\[ \frac{1}{\sqrt{2\sqrt{2}}} \|\phi\|_{\tilde{H}_k^{-1/2}(\Gamma)} \leq 
\|\phi\|_\Gamma
\leq \sqrt{C_0(1+\sqrt{kL})} \|\phi\|_{\tilde{H}_k^{-1/2}(\Gamma)}, 
\qquad \phi\in\tilde{H}^{-1/2}(\Gamma),
\]
and hence combining the right inequality with Theorem~\ref{GalerkinThm} we expect
\begin{equation}
\|\varphi-\varphi_N\|_\Gamma
 \leq C_5\sqrt{C_0(1+\sqrt{kL})} k^{-1}(1+(kL)^{3/2+\epsilon})\,\re^{-p\tau}.
  \label{eqn:equiv_norm_est}
\end{equation}
In fact, we will see below that as $k$ increases with $p$ fixed, $\|\varphi-\varphi_N\|_\Gamma$ actually decreases, suggesting that we can maintain accuracy as $k\to\infty$ with a fixed number of degrees of freedom.  Similarly, we will see that the relative error, $\|\varphi-\varphi_N\|_\Gamma/\|\varphi\|_\Gamma$, grows only very slowly as $k$ increases with $N$ fixed.  We will also compute the solution in the domain and the far field pattern, making comparison with the error estimates (\ref{GalerkinErrorDomain}) and~(\ref{FarFieldErrorEst}).

Since $N$ depends only on $p$, and the values of $p$ are more intuitively meaningful, we introduce the additional notation $\psi_p(s):=\varphi_{N}(s)$.  We begin in Figure~\ref{fig:bdy_soln} by plotting $|\psi_7(s)|$ (sampled at 500,000 evenly spaced points on the boundary) for both grazing and non-grazing incidence, for $k=10$ and $k=2560$.%
\begin{figure}[htb]
\begin{center}
\subfigure[non-grazing, $k=10$]{\includegraphics[width=5.9cm]{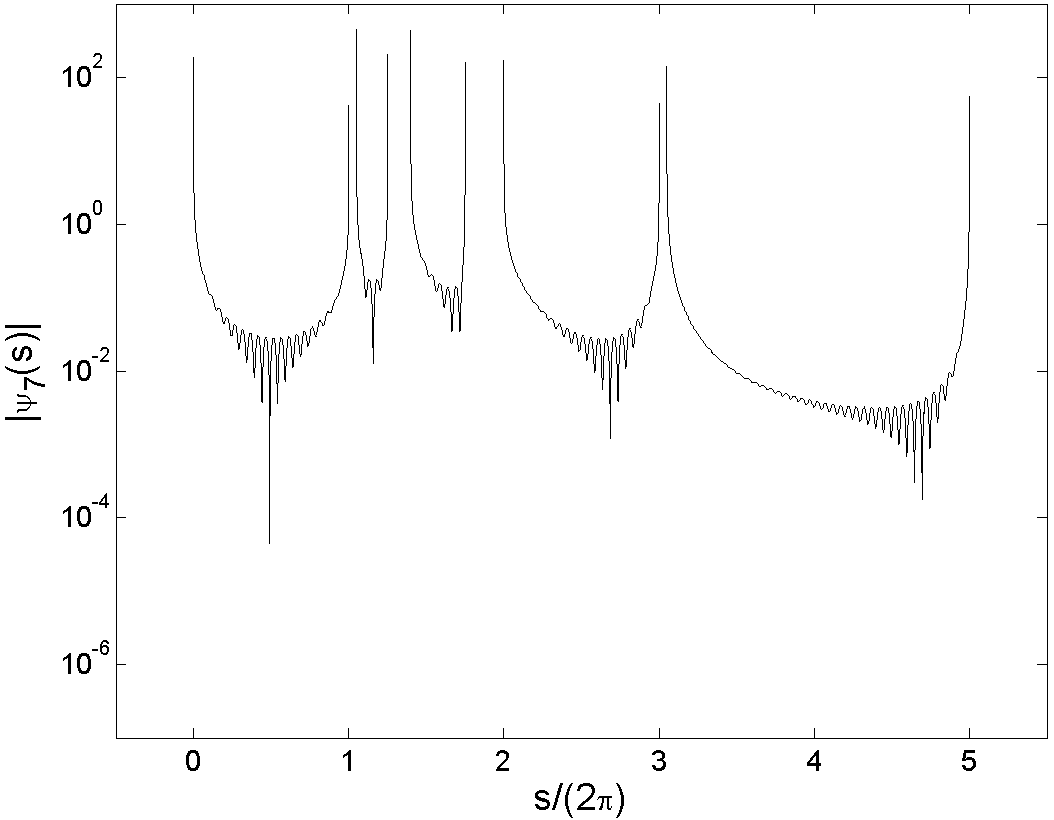}}
\hs{2}
\subfigure[grazing, $k=10$]{\includegraphics[width=5.9cm]{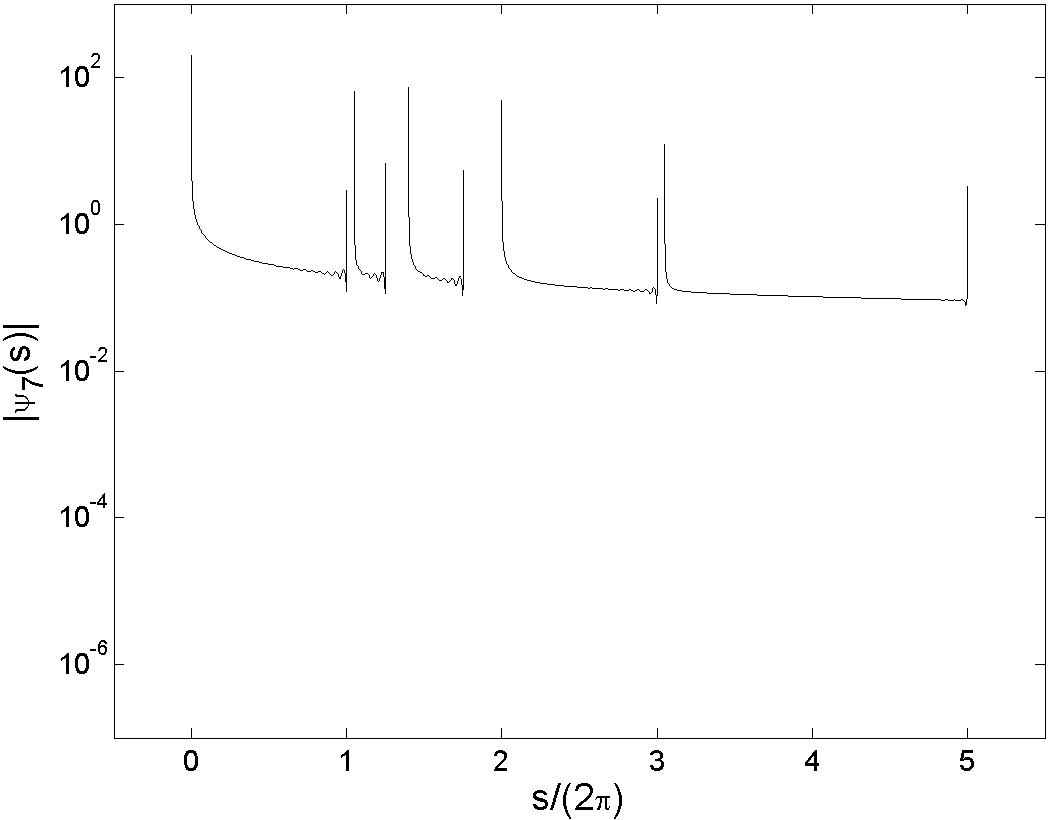}}
\vs{1}
\subfigure[non-grazing, $k=2560$]{\includegraphics[width=5.9cm]{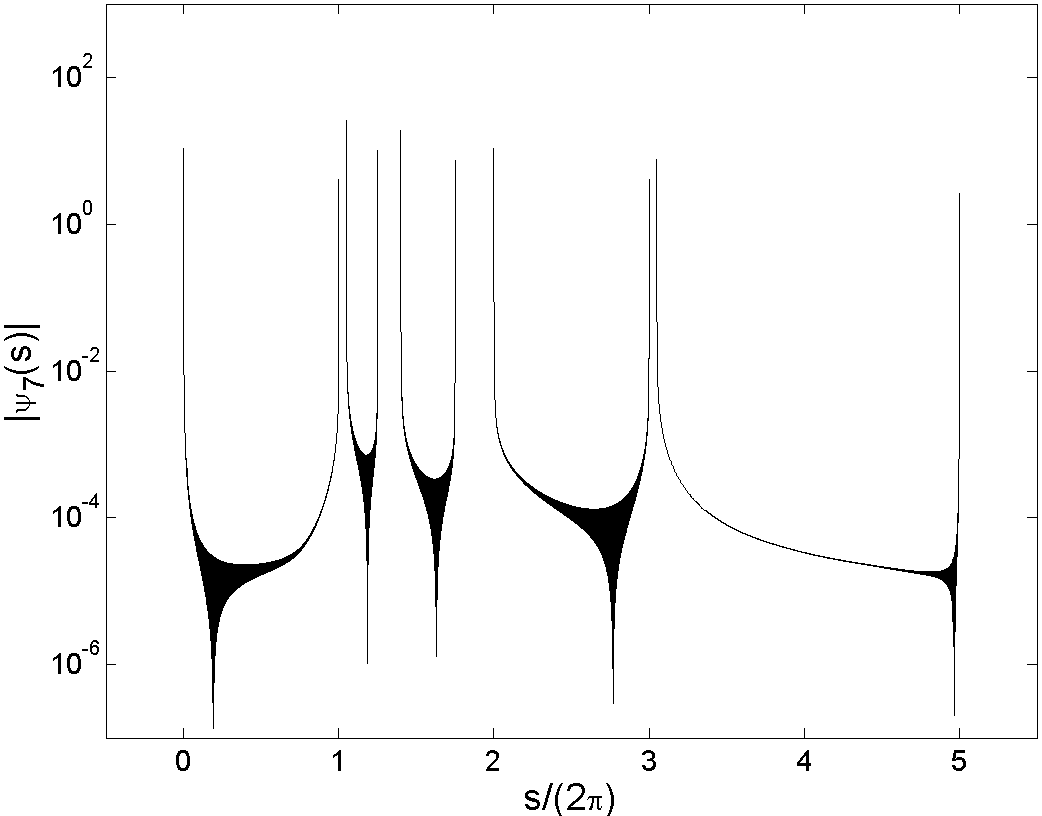}}
\hs{2}
\subfigure[grazing, $k=2560$]{\includegraphics[width=5.9cm]{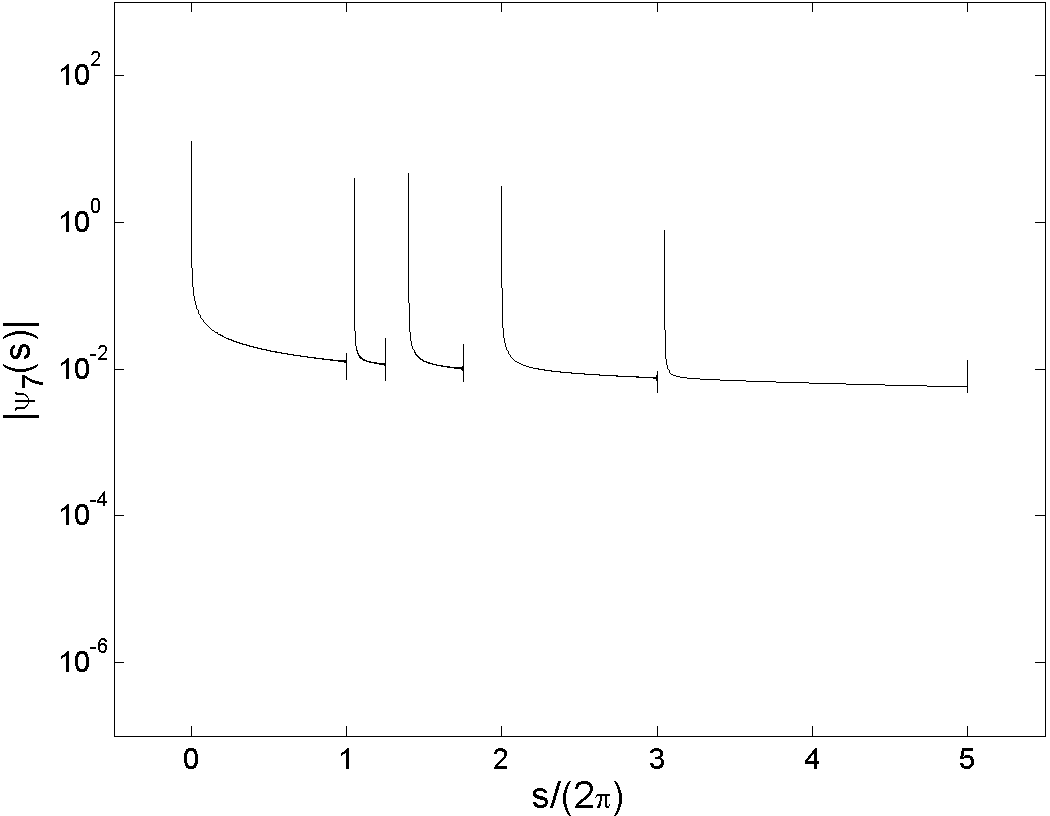}}
\end{center}
\caption{Boundary solution for grazing and non-grazing incidence, with $k=10$ and $k=2560$.}
\label{fig:bdy_soln}
\end{figure}
There is a singularity in the solution $\varphi$ at the edge of each component of the screen.  These singularities are evident in Figure~\ref{fig:bdy_soln} as is the increased oscillation for larger~$k$.  (The apparent shaded region is an artefact of very high oscillation.) 

In Figure~\ref{fig:bdy_errors} we plot the error 
$e_p:=\|\psi_7-\psi_p\|_\Gamma$ 
and the relative error 
$ r_p:=\|\psi_7-\psi_p\|_\Gamma /\|\psi_7\|_\Gamma$   
against $p$, for grazing and non-grazing incidence, for a range of values of $k$.
\begin{figure}[htb]
\begin{center}
\subfigure[$e_p$, non-grazing incidence]{\includegraphics[width=5.9cm]{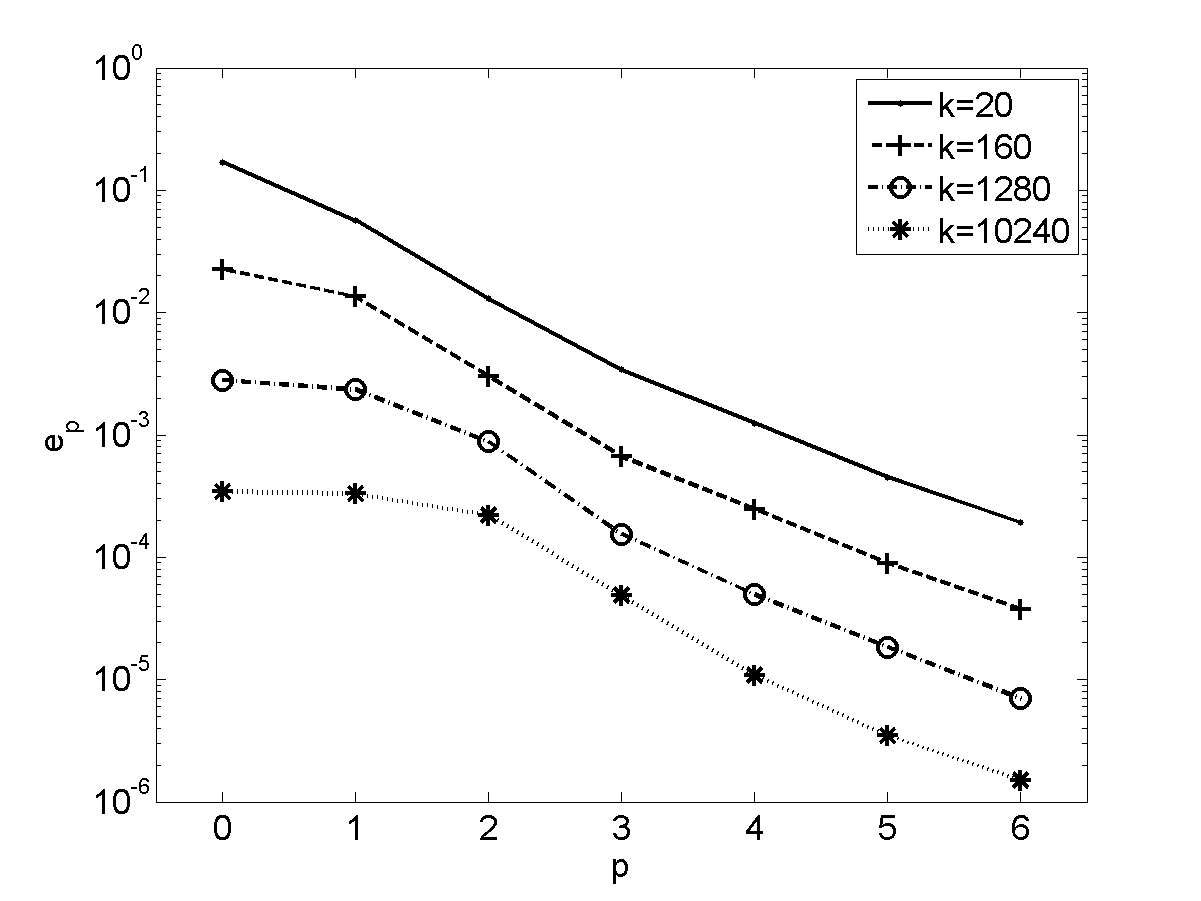}}
\hs{1}
\subfigure[$e_p$, grazing incidence]{\includegraphics[width=5.9cm]{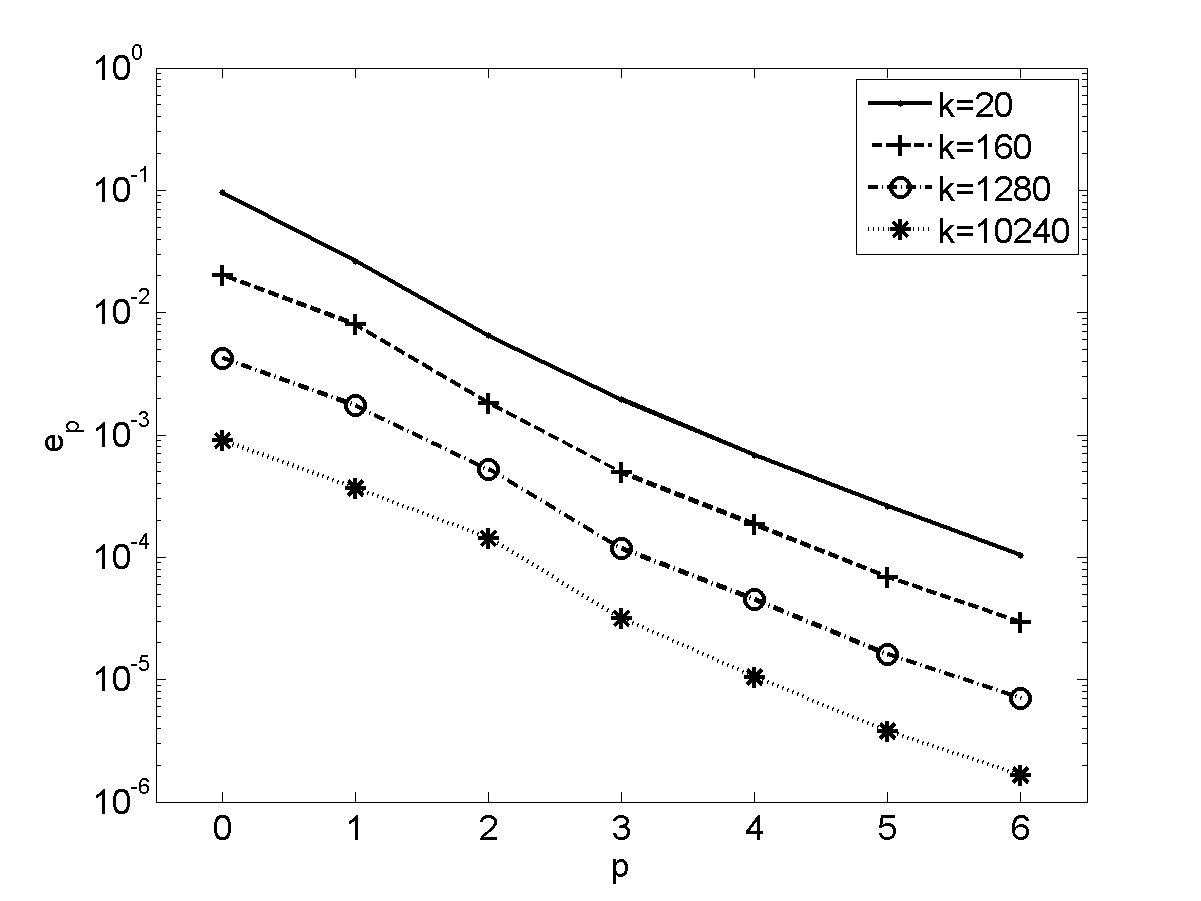}}
\vs{1}
\subfigure[$r_p$, non-grazing incidence]{\includegraphics[width=5.9cm]{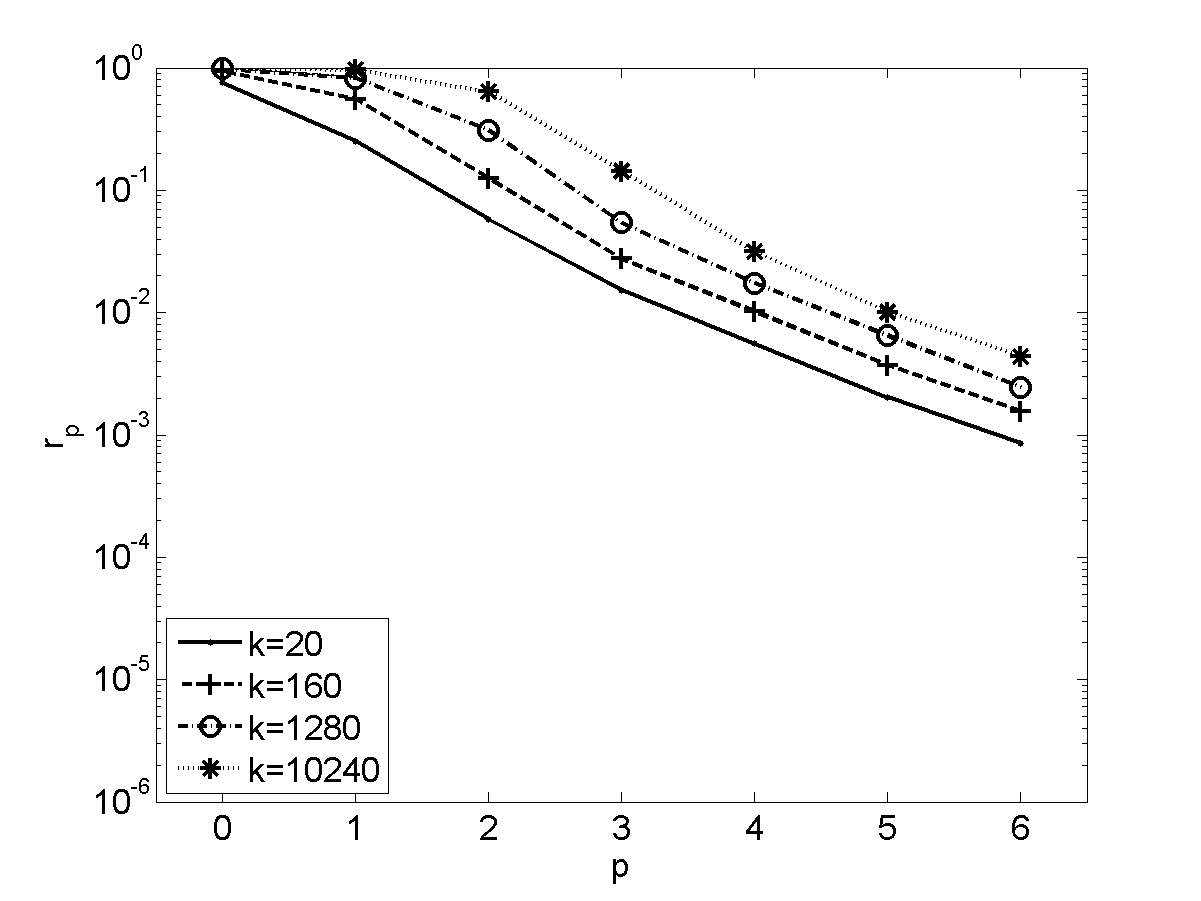}}
\hs{1}
\subfigure[$r_p$, grazing incidence]{\includegraphics[width=5.9cm]{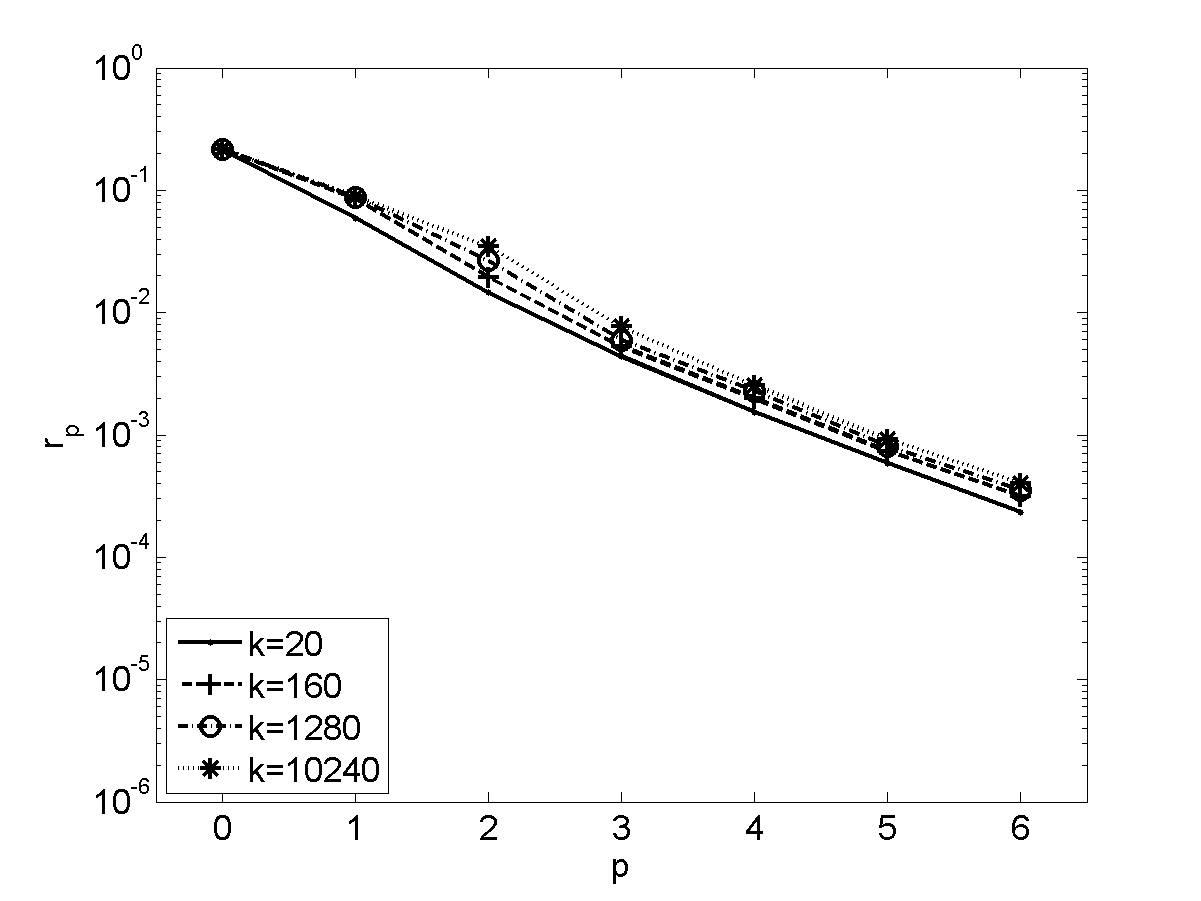}}
\end{center}
\caption{Errors and relative errors in the boundary solution.}
\label{fig:bdy_errors}
\end{figure}
We take the ``exact'' reference solutions to be those computed with $p=7$, as plotted for $k=10$ and $k=2560$ in Figure~\ref{fig:bdy_soln}.%

Figure~\ref{fig:bdy_errors} shows exponential decay as $p$ increases for both incident angles and for all values of $k$, as predicted by~(\ref{eqn:equiv_norm_est}).  A further key question is how the accuracy depends on $k$.  For both incident angles, the plots in the upper half of Figure~\ref{fig:bdy_errors} suggest that the errors decrease as $k$ increases, whilst the plots in the lower half of Figure~\ref{fig:bdy_errors} suggest that the relative errors increase only very slowly as $k$ increases. To investigate this further, in Table~\ref{table1} we show results for the two angles of incidence for $p=5$ (and hence $N=450$), for a wider range of values of $k$. We tabulate errors $e_p$, relative errors $r_p$, and also $N/(\sum_{j=1}^{n_i}L_j/\lambda)=2N/9k$, the average number of degrees of freedom per wavelength.  As $k$ increases the absolute error $e_p$ decreases, as shown in Figure~\ref{fig:bdy_errors}, and the relative error $r_p$ increases only very slowly, while the average number of degrees of freedom per wavelength decreases in proportion to~$k^{-1}$.%
\begin{table}[t!]
  \begin{center}
  { \setlength{\tabcolsep}{2.5mm}
    \begin{tabular}{|c|r|r|c|c|c|c|c|}
       \hline 
      $\bd$ & $k$ & $\frac{N}{\sum_{j=1}^{n_i}L_j/\lambda}$  & $e_p$ & $\mu$ & $r_p$ & COND & rel cpt \\
      \hline
 $\left(\frac{1}{\sqrt{2}},\frac{-1}{\sqrt{2}}\right)$  
                             &   10 &  10.00 &   9.25$\times10^{-4}$ & -1.03 &   2.18$\times10^{-3}$ &   1.50$\times10^{9}$ & 1.00 \\ %
                             &   20 &   5.00 &   4.51$\times10^{-4}$ & -0.77 &   2.01$\times10^{-3}$ &   1.03$\times10^{9}$ & 0.98 \\ %
                             &   40 &   2.50 &   2.64$\times10^{-4}$ & -0.87 &   2.49$\times10^{-3}$ &   7.12$\times10^{8}$ & 0.98 \\ %
                             &   80 &   1.25 &   1.45$\times10^{-4}$ & -0.69 &   2.88$\times10^{-3}$ &   4.97$\times10^{8}$ & 0.99 \\ %
                             &  160 &   0.63 &   8.99$\times10^{-5}$ & -0.80 &   3.72$\times10^{-3}$ &   3.50$\times10^{8}$ & 1.00 \\
                             &  320 &   0.31 &   5.16$\times10^{-5}$ & -0.74 &   4.32$\times10^{-3}$ &   2.47$\times10^{8}$ & 0.99 \\ %
                             &  640 &   0.16 &   3.08$\times10^{-5}$ & -0.74 &   5.08$\times10^{-3}$ &   1.75$\times10^{8}$ & 1.00 \\
                             & 1280 &   0.08 &   1.85$\times10^{-5}$ & -0.67 &   6.48$\times10^{-3}$ &   1.23$\times10^{8}$ & 1.00 \\
                             & 2560 &   0.04 &   1.16$\times10^{-5}$ & -0.91 &   7.64$\times10^{-3}$ &   8.72$\times10^{7}$ & 1.00 \\
                             & 5120 &   0.02 &   6.18$\times10^{-6}$ & -0.83 &   9.14$\times10^{-3}$ &   6.17$\times10^{7}$ & 1.01 \\
                            & 10240 &   0.01 &   3.47$\times10^{-6}$ &       &   1.01$\times10^{-2}$ &   4.36$\times10^{7}$ & 1.01 \\
   \hline
 $(1,0)$  %
            &   10 &  10.00 &   3.39$\times10^{-4}$ & -0.38 &   4.52$\times10^{-4}$ &   1.50$\times10^{9}$ & 1.00 \\
            &   20 &   5.00 &   2.60$\times10^{-4}$ & -0.61 &   5.84$\times10^{-4}$ &   1.03$\times10^{9}$ & 1.01 \\
            &   40 &   2.50 &   1.70$\times10^{-4}$ & -0.60 &   6.43$\times10^{-4}$ &   7.12$\times10^{8}$ & 0.99 \\
            &   80 &   1.25 &   1.12$\times10^{-4}$ & -0.71 &   7.13$\times10^{-4}$ &   4.97$\times10^{8}$ & 0.98 \\
            &  160 &   0.63 &   6.84$\times10^{-5}$ & -0.69 &   7.31$\times10^{-4}$ &   3.50$\times10^{8}$ & 0.99 \\
            &  320 &   0.31 &   4.23$\times10^{-5}$ & -0.68 &   7.59$\times10^{-4}$ &   2.47$\times10^{8}$ & 0.99 \\
            &  640 &   0.16 &   2.64$\times10^{-5}$ & -0.72 &   7.97$\times10^{-4}$ &   1.75$\times10^{8}$ & 1.00 \\
            & 1280 &   0.08 &   1.60$\times10^{-5}$ & -0.62 &   8.13$\times10^{-4}$ &   1.23$\times10^{8}$ & 1.00 \\
            & 2560 &   0.04 &   1.04$\times10^{-5}$ & -0.73 &   8.92$\times10^{-4}$ &   8.72$\times10^{7}$ & 1.00 \\
            & 5120 &   0.02 &   6.27$\times10^{-6}$ & -0.73 &   9.02$\times10^{-4}$ &   6.17$\times10^{7}$ & 1.01 \\
           & 10240 &   0.01 &   3.78$\times10^{-6}$ &       &   9.14$\times10^{-4}$ &   4.36$\times10^{7}$ & 1.00 \\
   \hline
    \end{tabular}
    }
  \end{center}
  \caption{Errors $e_p$ and relative errors $r_p$, for non-grazing ($\bd=(1/\sqrt{2},-1/\sqrt{2})$) and grazing ($\bd=(1,0)$) incidence, with $p=5$ (and hence $N=450$).}
  \label{table1}
\end{table}
We also tabulate $\log_2(e_p(2k)/e_p(k))$, where $e_p(k)$ refers to the absolute error $e_p$ for a particular value of $k$. This is an estimate of the order of convergence, $\mu$, on a hypothesis that $e_p(k) \sim k^{\mu}$ as $k\rightarrow\infty$.  The values of $\mu\in(-0.91,-0.60)$ for $k\geq 20$ are considerably lower than might be anticipated from the estimate~(\ref{eqn:equiv_norm_est}), suggesting that our estimates are not sharp in terms of their $k$-dependence.  In particular, the results are consistent with the conjecture that $\uM=\ord{1}$ (as discussed just before Lemma~\ref{lem:Mu}).

In Table~\ref{table1} we also show the condition number (COND) of the $N$-dimensional linear system \rf{eqn:linsys}, and 
we investigate the dependence of the condition number on both $k$ and $p$ further in Figure~\ref{fig:cond}.
\begin{figure}[p]
\begin{center}
  \includegraphics[height=6cm]{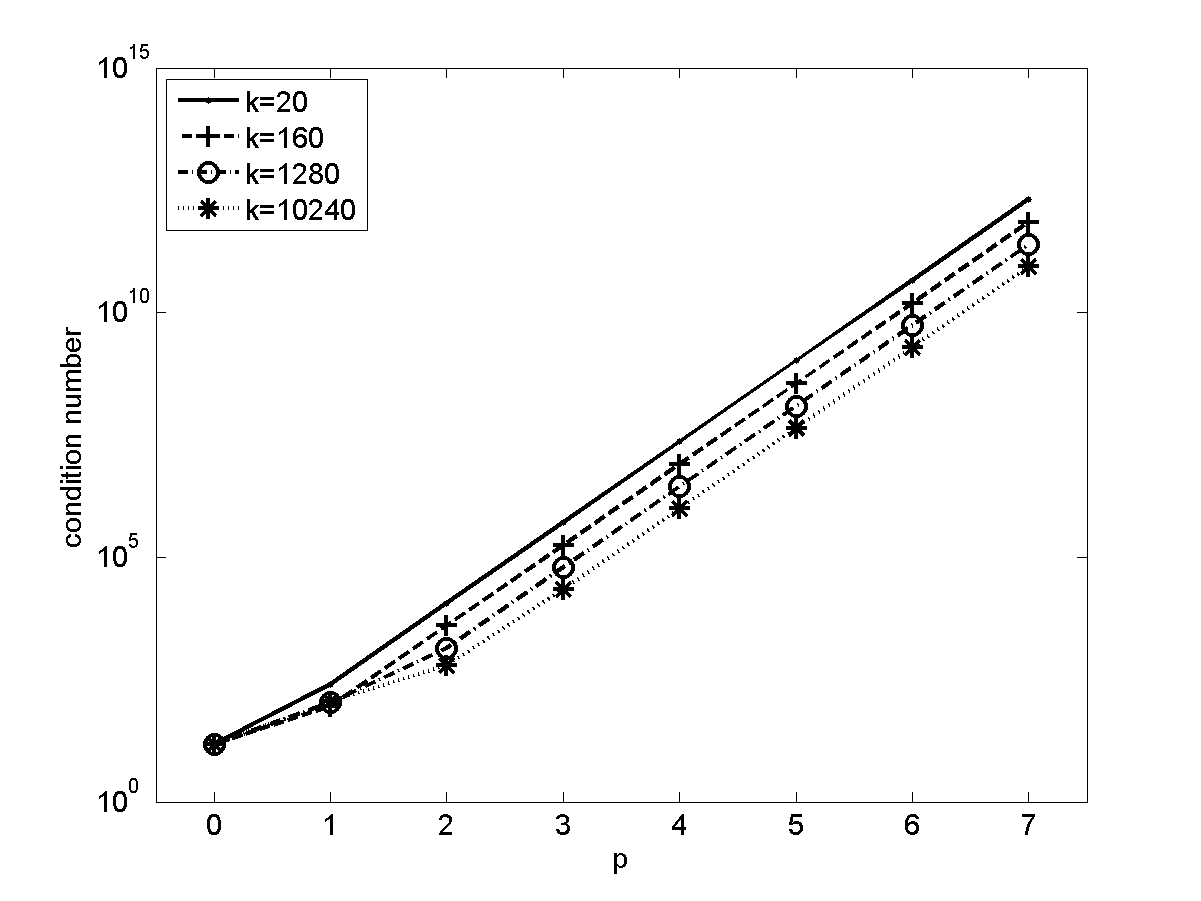}
\end{center}
\caption{Condition number of the $N$-dimensional linear system \rf{eqn:linsys}.}
\label{fig:cond}
\end{figure}
For fixed $k$, the condition number grows exponentially with respect to $p$ (note the logarithmic scale on the vertical axis).  This rapid growth in the condition number as $p$ increases is not surprising: for weakly singular BIEs of the first kind, the condition number for standard $hp$ Galerkin BEM, with a geometrically graded mesh (as used here), is known to grow exponentially with respect to the number of unknowns (see, e.g., \citet{HeStTr:98}).  For fixed $p$, the condition number decreases slowly as $k$ increases (and hence as the average number of degrees of freedom per wavelength decreases), and we note that the condition numbers we encountered in our experiments were not so large as to cause problems for our direct solver.
Furthermore, as remarked in~\S\ref{Intro}, our best approximation results (though not our full analysis) hold regardless of the BIE used, so using our approximation space within a better conditioned BIE such as the second kind formulations proposed in \citet{BrLi:12,LiBr:13} might lead to reduced condition numbers.  

Finally, in the last column of Table~\ref{table1} we show the relative computing time (rel cpt) required for setting up and solving the linear system (we solve the system directly), measured with respect to the time required for $k=10$.  We emphasize the fact that the computing time is independent of $k$, reflecting that all of the integrals are evaluated using Filon quadrature in a $k$-independent way.

We now turn our attention to the approximation of $u(\bx)$, $\bx\in D$, and of the far field pattern $F$ (often the quantities of real interest in scattering problems).  To investigate the accuracy of $u_N(\bx)$, we compute the error in this solution at 89600 evenly spaced points (corresponding to 10 points per wavelength for $k=640$) around the perimeter of the rectangle with corners at $(-\pi,\pi)$, $(11\pi,\pi)$, $(11\pi,-\pi)$, $(-\pi,-\pi)$, which surrounds the screen.  This thus includes points in the illuminated and shadow regions.  To allow easy comparison between different discretizations, noting again that for each example $N$ depends only on $p$, we denote the solution on this rectangle (with a slight abuse of notation) by $u_p(t) := u_N(\bx(t))$, $t\in[0,28\pi]$, where %
$\bx(t)$ represents an arclength parametrisation of the rectangle perimeter.%

In Figure~\ref{fig:domain_errors} we plot for each incident angle and for $k=10$, 40, 160 and 640 the maximum absolute error, $\max_{t\in[0,28\pi]}|u_7(t)-u_p(t)|$, and the relative maximum absolute error, $\max_{t\in[0,28\pi]}|u_7(t)-u_p(t)|/\max_{t\in[0,28\pi]}|u_7(t)|$.
For each example the exponential decay as $p$ increases is clear (note the logarithmic scale on the vertical axes), as predicted by~\rf{GalerkinErrorDomain}.  Moreover, for each fixed value of $p$ the errors appear to be decreasing as $k$ increases; this is better than the 
mild growth with $k$ of the bound~\rf{GalerkinErrorDomain}.  These relative errors are smaller than those on the boundary in Figure~\ref{fig:bdy_errors}.  
\begin{figure}[p]
\begin{center}
\subfigure[non-grazing, $\max_{t\in[0,28\pi]}|u_7(t)-u_p(t)|$]{\includegraphics[width=5.9cm]{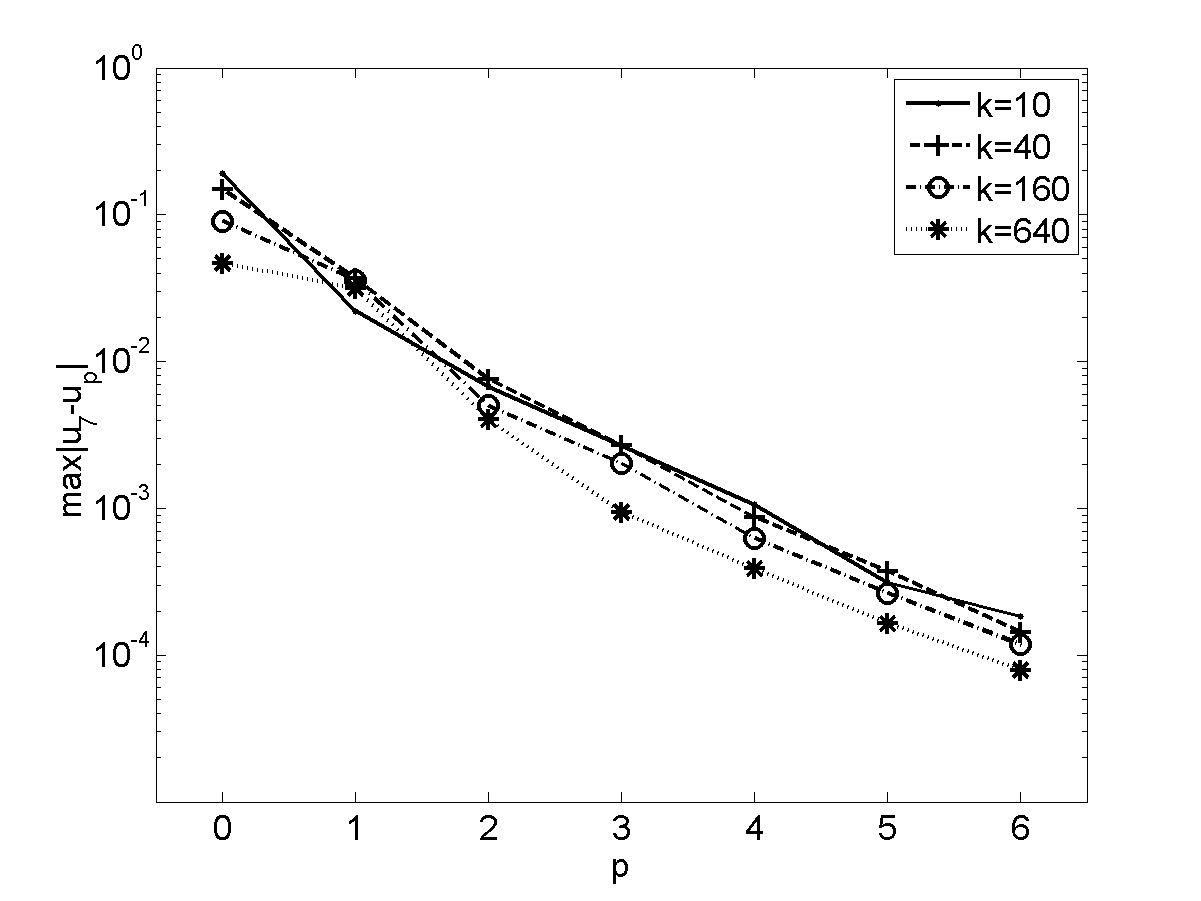}}
\hs{2}
\subfigure[grazing, $\max_{t\in[0,28\pi]}|u_7(t)-u_p(t)|$]{\includegraphics[width=5.9cm]{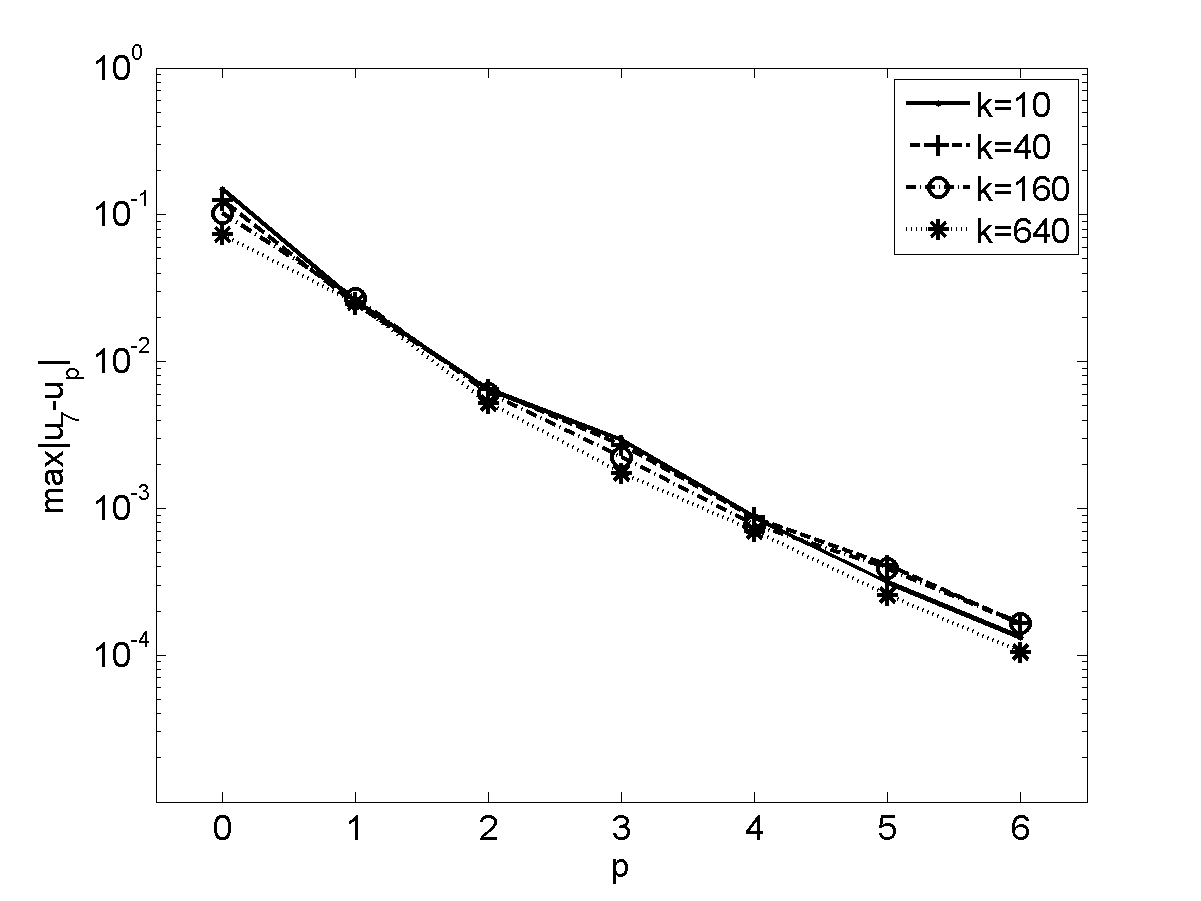}}
\vs{1}
\subfigure[non-grazing, $\frac{\max_{t\in[0,28\pi]}|u_7(t)-u_p(t)|}{\max_{t\in[0,28\pi]}|u_7(t)|}$]{\includegraphics[width=5.9cm]{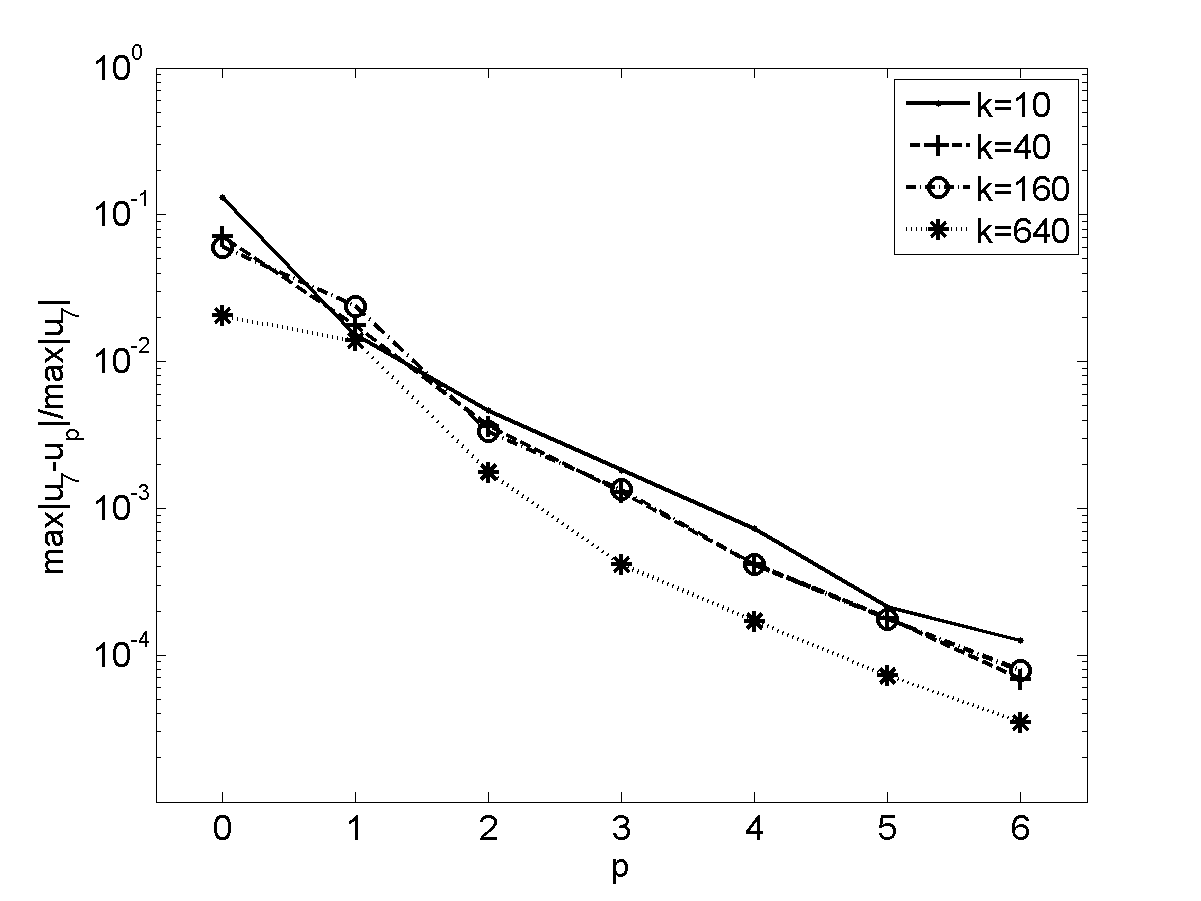}}
\hs{2}
\subfigure[grazing, $\frac{\max_{t\in[0,28\pi]}|u_7(t)-u_p(t)|}{\max_{t\in[0,28\pi]}|u_7(t)|}$]{\includegraphics[width=5.9cm]{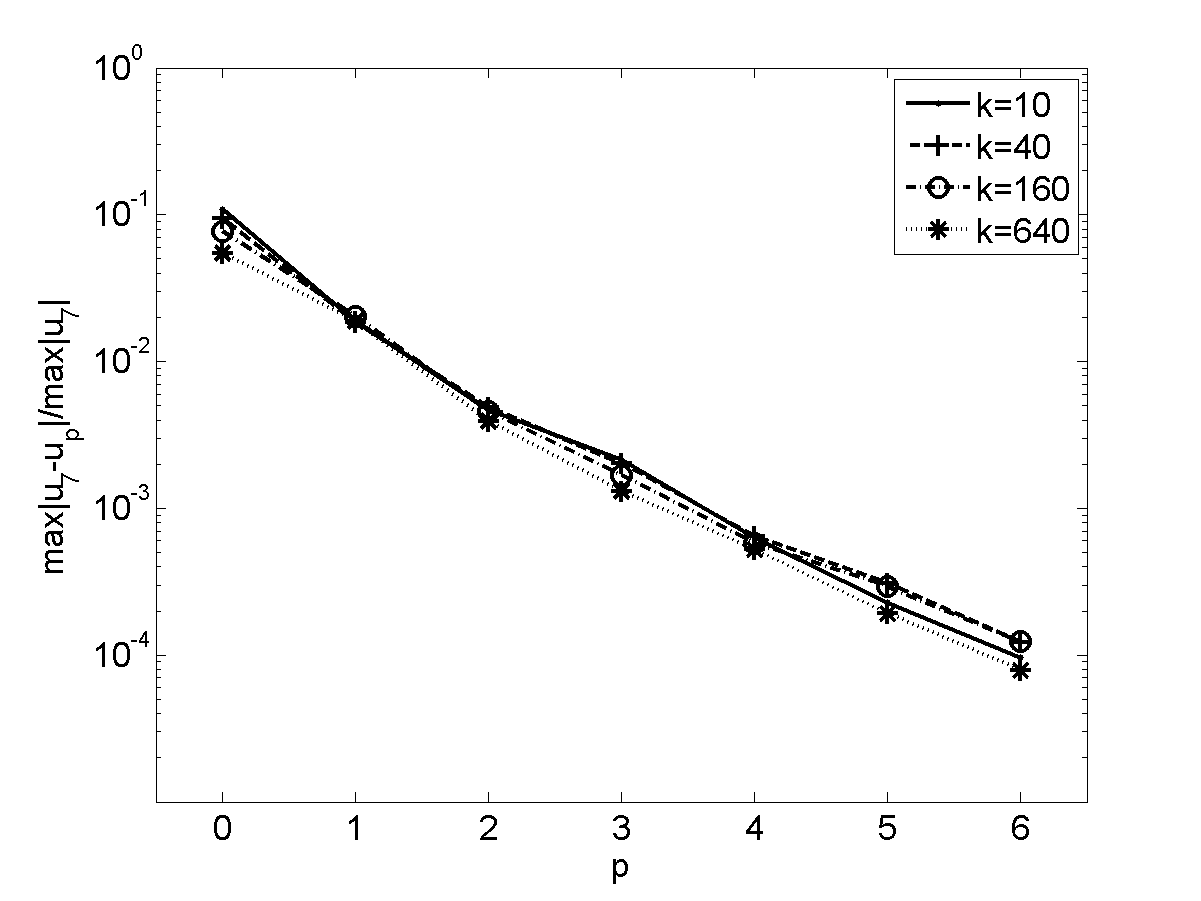}}
\end{center}
\caption{Errors in domain solution on a rectangle surrounding the screen.}%
\label{fig:domain_errors}
\end{figure}

Finally, we compute our approximation~(\ref{eqn:FFP_approx}) to the far field pattern.
Again, %
with a slight abuse of notation, we define
$F_p(t) := F_N(\hat{\bx}(t))$, $t\in[0,2\pi]$,
where $t=0$ corresponds to the direction from which $u^i$ is incident and $\hat \bx(t)$ is a point at angular distance $t$ (measured anticlockwise) around the unit circle.
Plots of $|F_7(t)|$ (the magnitude of the far field pattern computed with our finest discretization) for each of the two incident directions, for $k=1280$, are shown in Figure~\ref{fig:FFP1}. 
For non-grazing incidence, the peaks corresponding to the geometric shadow (i.e.\ the forward-scattering direction) and the specular reflection are indicated (compare Figure~\ref{fig:FFP1} with Figure~\ref{fig:screen_domain}).  We also show the points at which $\hat{\bx}(t)\in\Gamma_\infty$.  For grazing incidence, the shadow peak %
is much lower for than for non-grazing incidence; in the grazing case, there is no reflected peak.
\begin{figure}[p]
\begin{center}
\subfigure[non-grazing]{\includegraphics[width=5.9cm]{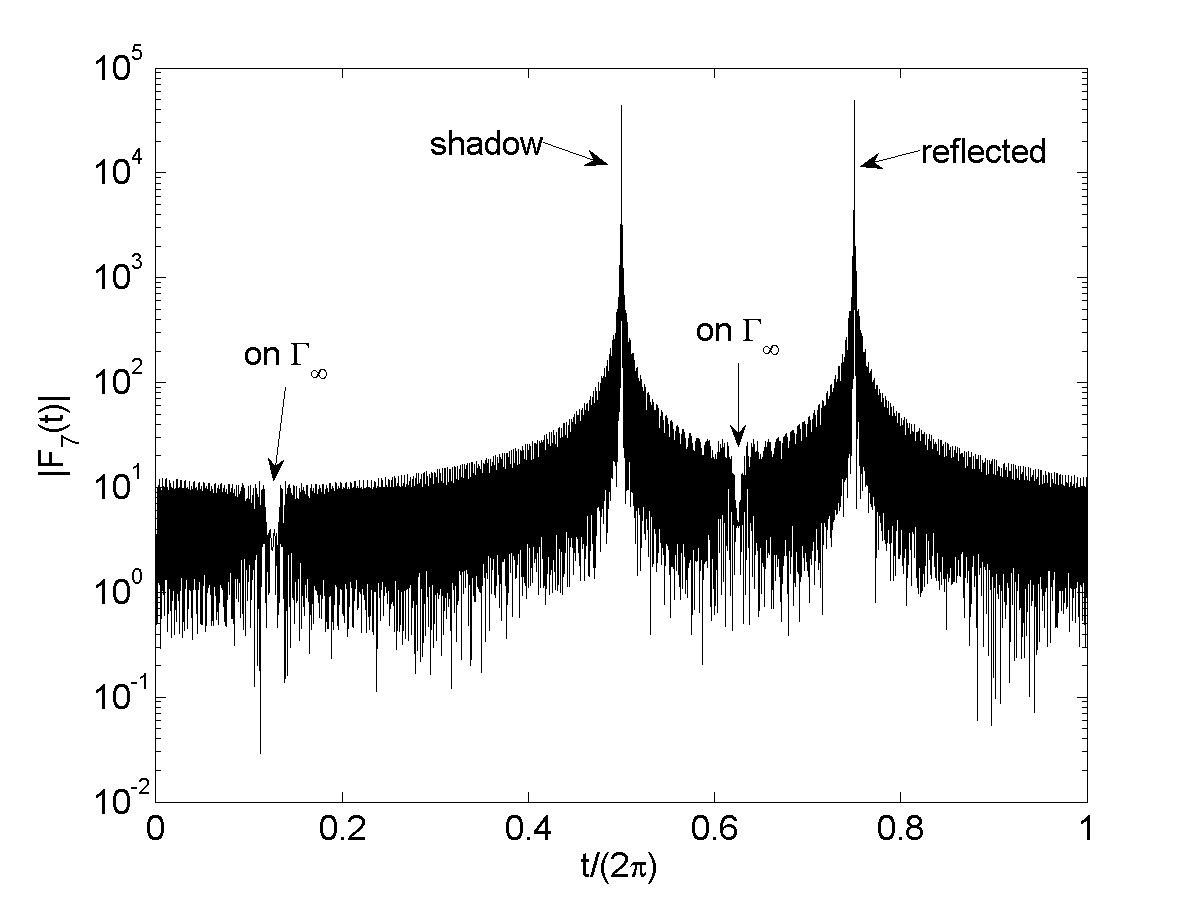}}
\hs{2}
\subfigure[grazing]{\includegraphics[width=5.9cm]{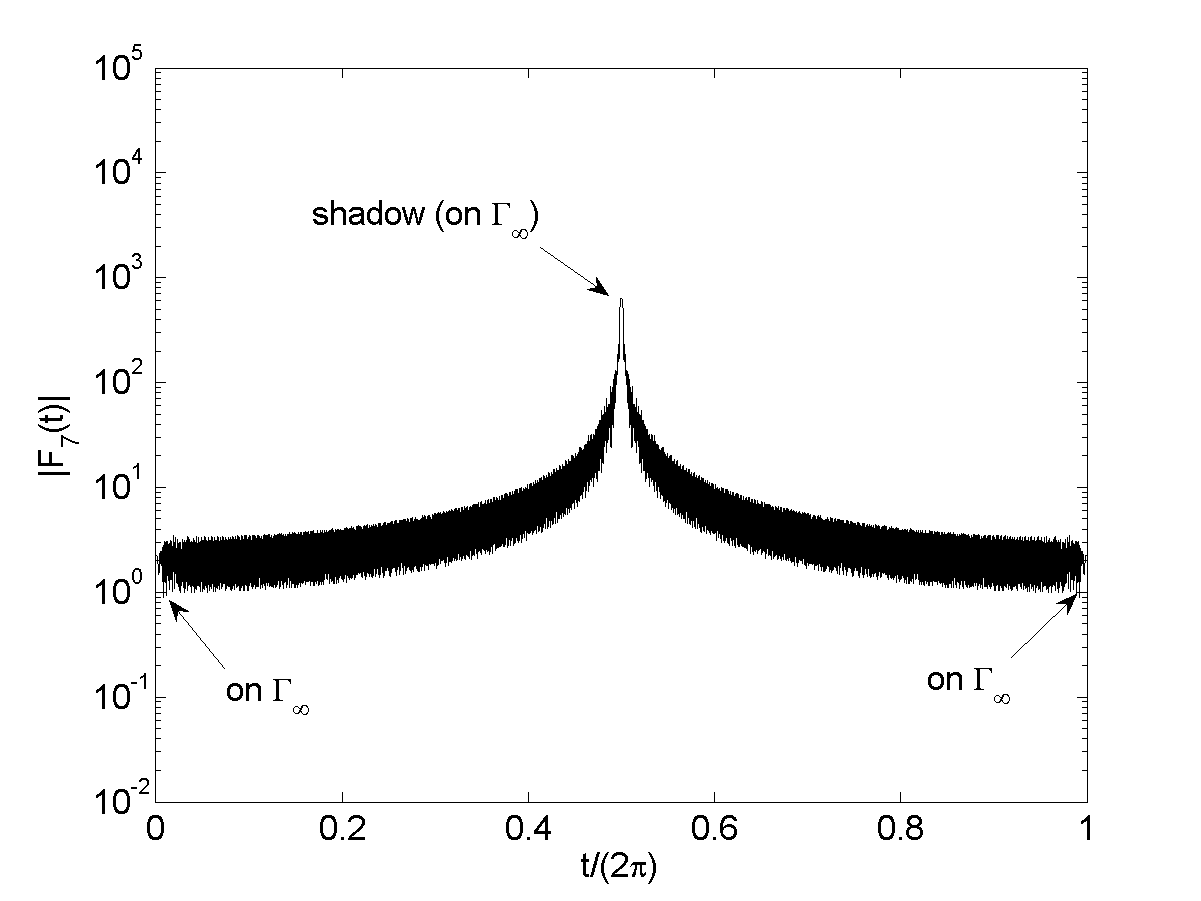}}
\end{center}
\caption{Far field patterns, $|F_7(t)|\approx|F(t)|$, $k=1280$.}
\label{fig:FFP1}
\end{figure}

In Figure~\ref{fig:FFP3} we plot approximations to $\|F_7-F_p\|_{L^{\infty}(\mathbb{S}^1)}$ and $\|F_7-F_p\|_{L^{\infty}(\mathbb{S}^1)}/\|F_7\|_{L^{\infty}(\mathbb{S}^1)}$ %
for $k=20$, 80, 320 and 1280, for each of the two incident directions. 
To approximate the $L^{\infty}$ norm, we compute $F_7$ and $F_p$ at 50,000 evenly spaced points on the unit circle.
The exponential decay as $p$ increases, as predicted by Theorem~\ref{FarFieldThm}, can be clearly seen (again, note the logarithmic scale on the vertical axes).
\begin{figure}[p]
\begin{center}
\subfigure[non-grazing, $\|F_7-F_p\|_{L^{\infty}(\mathbb{S}^1)}$]{\includegraphics[width=5.9cm]{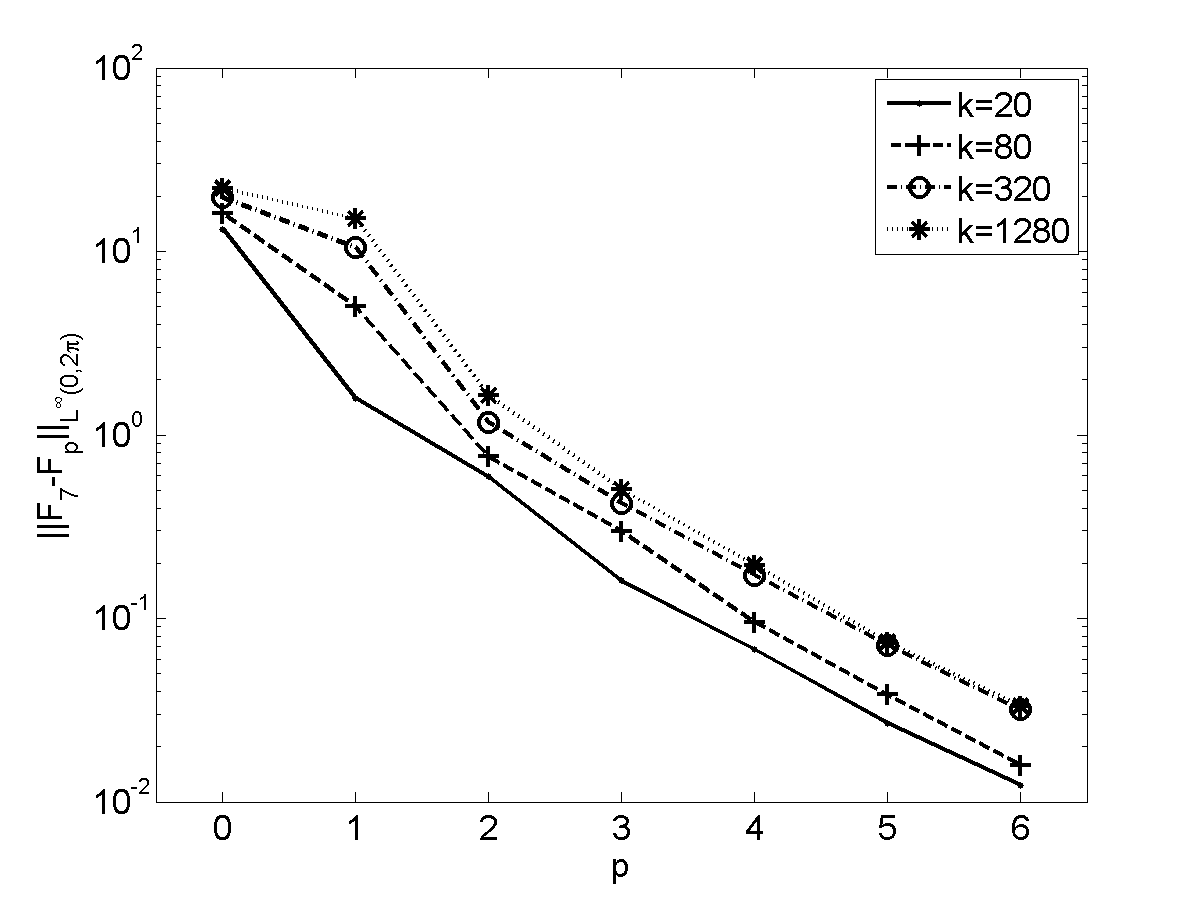}}
\hs{2}
\subfigure[grazing, $\|F_7-F_p\|_{L^{\infty}(\mathbb{S}^1)}$]{\includegraphics[width=5.9cm]{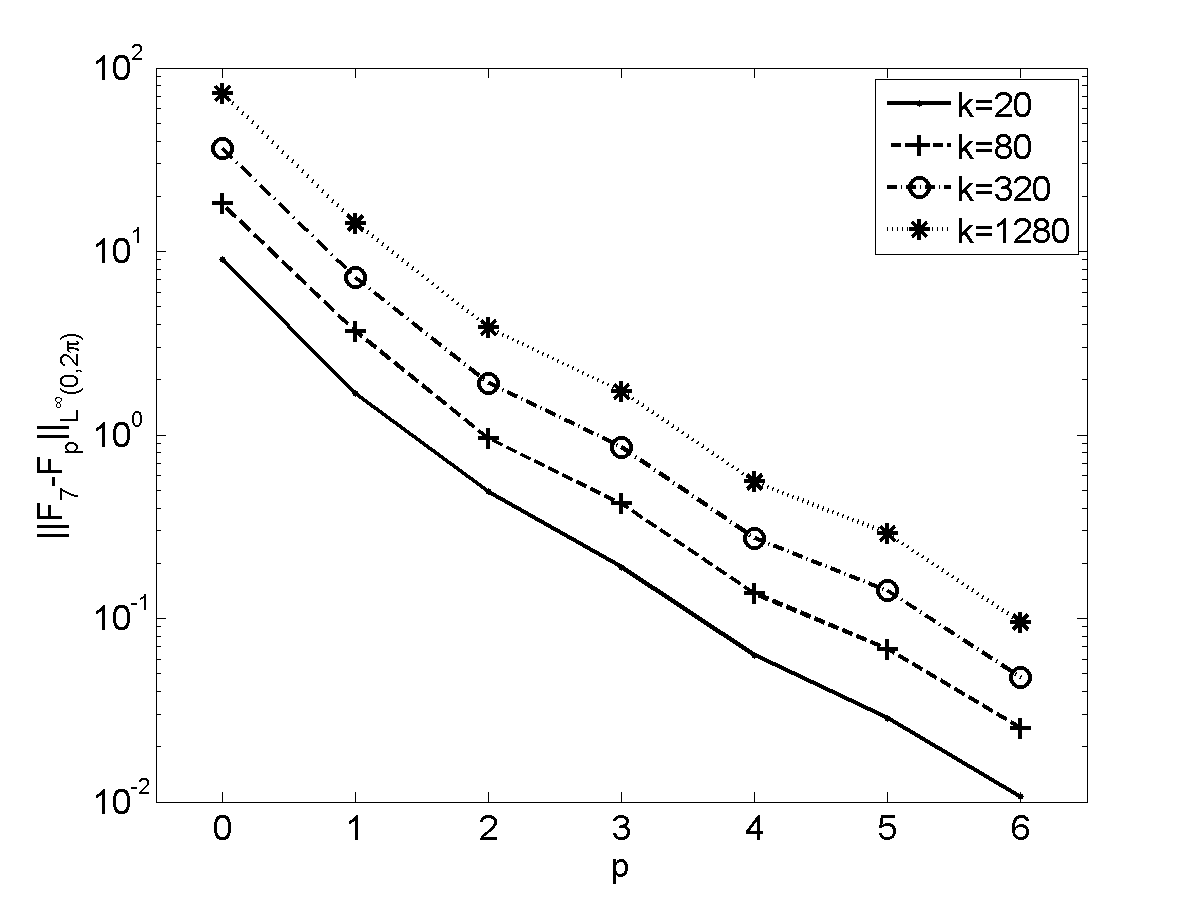}}
\vs{1}
\subfigure[non-grazing, $\displaystyle{\frac{\|F_7-F_p\|_{L^{\infty}(\mathbb{S}^1)}}{\|F_7\|_{L^{\infty}(\mathbb{S}^1)}}}$]{\includegraphics[width=5.9cm]{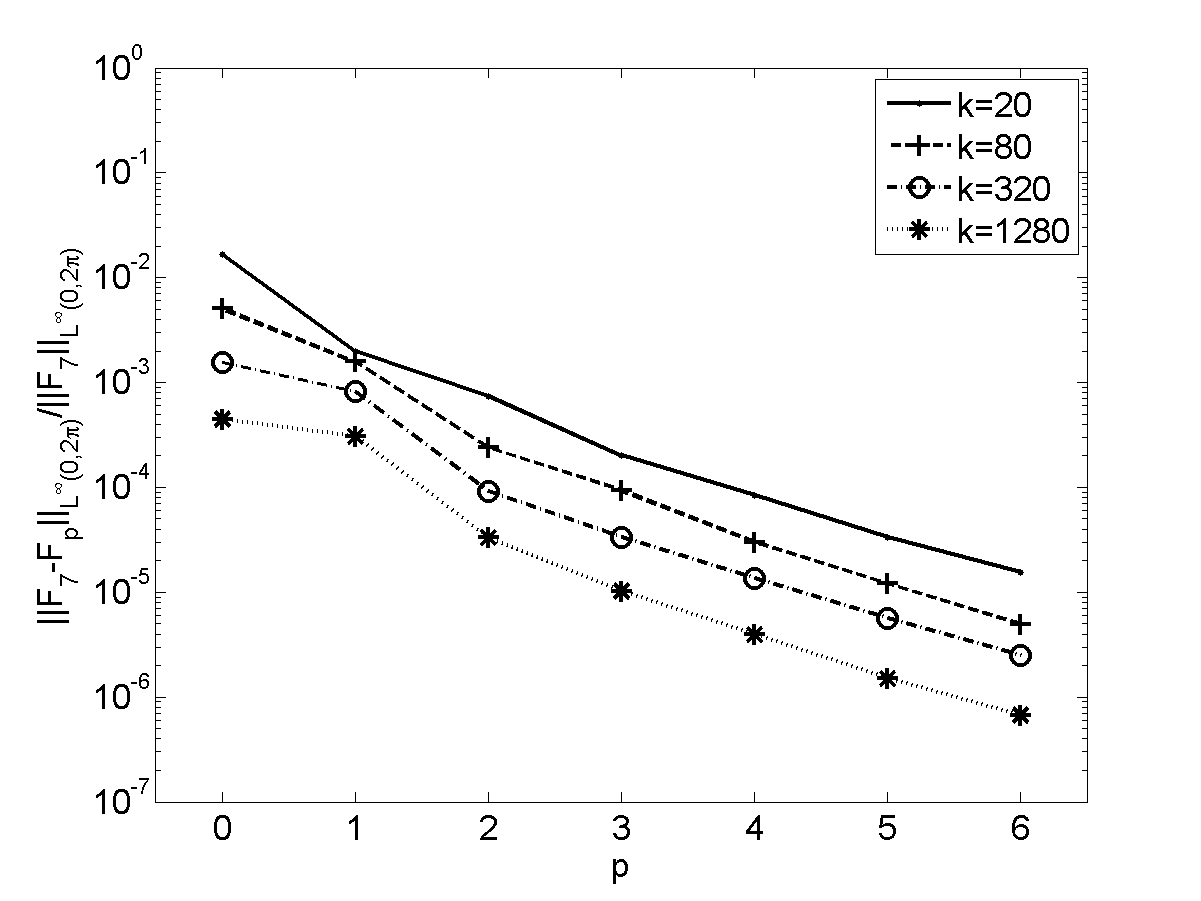}}
\hs{2}
\subfigure[grazing, $\displaystyle{\frac{\|F_7-F_p\|_{L^{\infty}(\mathbb{S}^1)}}{\|F_7\|_{L^{\infty}(\mathbb{S}^1)}}}$]{\includegraphics[width=5.9cm]{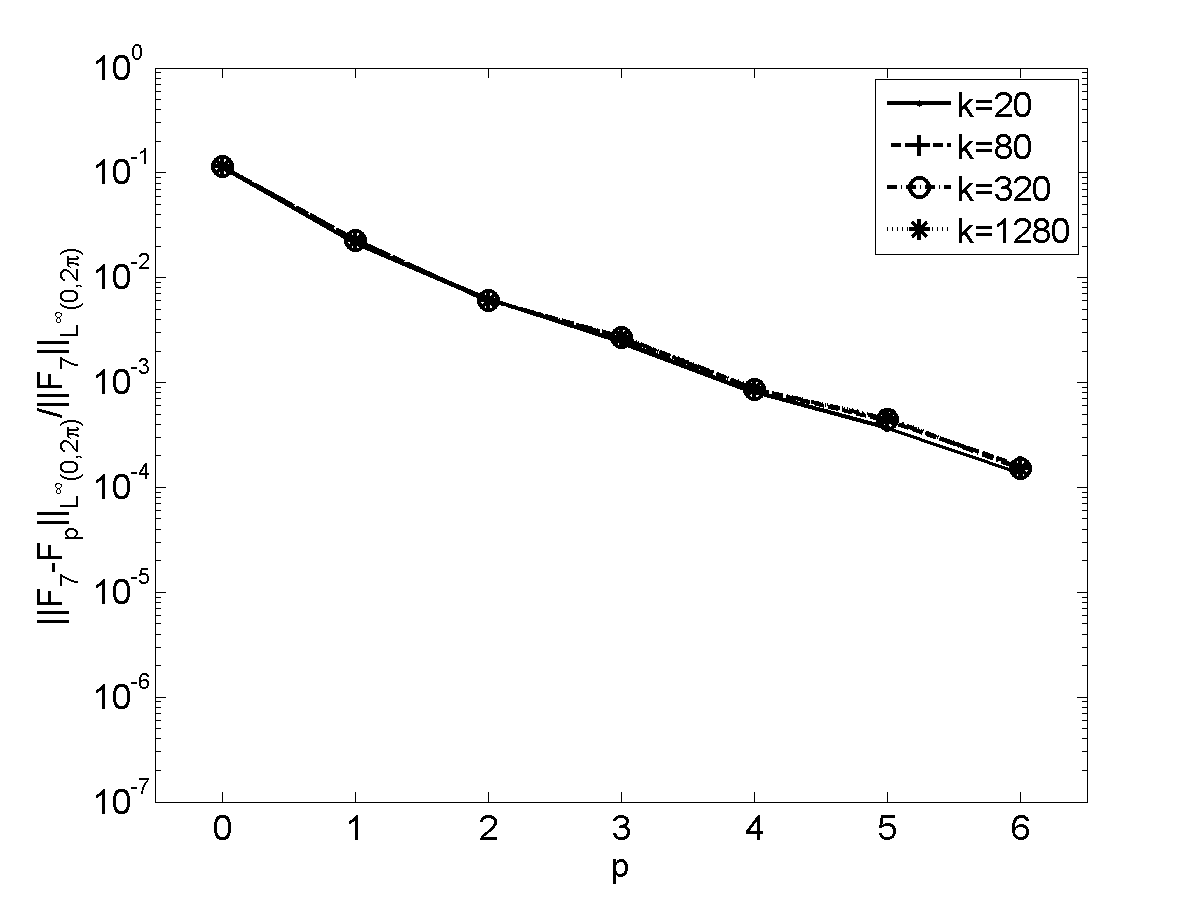}}
\end{center}
\caption{Errors in the far field pattern (note the different scales on the upper and lower plots).}%
\label{fig:FFP3}
\end{figure}

For fixed $p$, the errors $\|F_7-F_p\|_{L^{\infty}(\mathbb{S}^1)}$ increase slowly as $k$ increases.  To investigate this behaviour more carefully, in Table~\ref{table2} we show results for the two angles of incidence for $p=5$ (and hence $N=450$), for a wider range of values of $k$. 
We also tabulate $\log_2(f_p(2k)/f_p(k))$, where $f_p(k)$ refers to the absolute error $\|F_7-F_p\|_{L^{\infty}(\mathbb{S}^1)}$ for a particular value of $k$. This is an estimate of the order of convergence, $\zeta$, on a hypothesis that $f_p(k) \sim k^{\zeta}$ as $k\rightarrow\infty$.  The values of $\zeta\in(0.02,0.75)$ for non-grazing incidence, and $\zeta\approx0.5$ for grazing incidence, are considerably lower than might be anticipated from the estimate~(\ref{FarFieldErrorEst}), suggesting that our estimates are not sharp in terms of their $k$-dependence.  In particular, the results are again consistent with the conjecture that $\uM=\ord{1}$ (as discussed just before Lemma~\ref{lem:Mu}).  In the last column of Table~\ref{table2}, we show how $\|F_7\|_{L^{\infty}(\mathbb{S}^1)}$ grows with $k$.  For non-grazing incidence, $\|F_7\|_{L^{\infty}(\mathbb{S}^1)}$ grows approximately linearly with $k$, and so the relative error $\|F_7-F_p\|_{L^{\infty}(\mathbb{S}^1)}/\|F_7\|_{L^{\infty}(\mathbb{S}^1)}$ decreases as $k$ increases.  For grazing incidence, $\|F_7\|_{L^{\infty}(\mathbb{S}^1)}$ grows almost exactly like $k^{1/2}$, the same rate as $\|F_7-F_p\|_{L^{\infty}(\mathbb{S}^1)}$, and hence the relative error remains approximately constant as $k$ increases.
\begin{table}[htb]
  \begin{center}
    \begin{tabular}{|c|r|c|c|c|c|}
       \hline
      $\bd$ & $k$ & $\|F_7-F_p\|_{L^{\infty}(\mathbb{S}^1)}$ & $\zeta$ & $\frac{\|F_7-F_p\|_{L^{\infty}(\mathbb{S}^1)}}{\|F_7\|_{L^{\infty}(\mathbb{S}^1)}}$ &
       $\|F_7\|_{L^{\infty}(\mathbb{S}^1)}$ \\
      \hline
 $\left(\frac{1}{\sqrt{2}},\frac{-1}{\sqrt{2}}\right)$  
                             &   10 &   1.60$\times10^{-2}$ & 0.75 &   3.99$\times10^{-5}$ &   4.02$\times10^{2}$  \\
        (non-grazing)        &   20 &   2.69$\times10^{-2}$ & 0.28 &   3.36$\times10^{-5}$ &   8.01$\times10^{2}$  \\
                             &   40 &   3.27$\times10^{-2}$ & 0.23 &   2.05$\times10^{-5}$ &   1.60$\times10^{3}$  \\
                             &   80 &   3.84$\times10^{-2}$ & 0.62 &   1.20$\times10^{-5}$ &   3.20$\times10^{3}$  \\
                             &  160 &   5.90$\times10^{-2}$ & 0.27 &   9.23$\times10^{-6}$ &   6.39$\times10^{3}$  \\
                             &  320 &   7.14$\times10^{-2}$ & 0.04 &   5.59$\times10^{-6}$ &   1.28$\times10^{4}$  \\
                             &  640 &   7.33$\times10^{-2}$ & 0.02 &   2.89$\times10^{-6}$ &   2.54$\times10^{4}$  \\
                             & 1280 &   7.43$\times10^{-2}$ & 0.42 &   1.51$\times10^{-6}$ &   4.94$\times10^{4}$  \\
                             & 2560 &   9.95$\times10^{-2}$ &      &   1.12$\times10^{-6}$ &   8.85$\times10^{4}$  \\
   \hline
 $(1,0)$    &   10 &  1.56$\times10^{-2}$ & 0.87 &   2.79$\times10^{-4}$ &   5.61$\times10^{1}$  \\
  (grazing) &   20 &  2.87$\times10^{-2}$ & 0.64 &   3.61$\times10^{-4}$ &   7.93$\times10^{1}$  \\
            &   40 &  4.47$\times10^{-2}$ & 0.61 &   3.98$\times10^{-4}$ &   1.12$\times10^{2}$  \\
            &   80 &  6.80$\times10^{-2}$ & 0.55 &   4.28$\times10^{-4}$ &   1.59$\times10^{2}$  \\
            &  160 &  9.94$\times10^{-2}$ & 0.51 &   4.43$\times10^{-4}$ &   2.24$\times10^{2}$  \\
            &  320 &  1.41$\times10^{-1}$ & 0.53 &   4.46$\times10^{-4}$ &   3.17$\times10^{2}$  \\
            &  640 &  2.04$\times10^{-1}$ & 0.51 &   4.55$\times10^{-4}$ &   4.49$\times10^{2}$  \\
            & 1280 &  2.90$\times10^{-1}$ & 0.51 &   4.57$\times10^{-4}$ &   6.34$\times10^{2}$  \\
            & 2560 &  4.13$\times10^{-1}$ & 0.50 &   4.61$\times10^{-4}$ &   8.97$\times10^{2}$  \\
   \hline
    \end{tabular}
  \end{center}
  \caption{Errors and relative errors in the far field pattern, for various $k$, with $p=5$ (and hence $N=450$).}
  \label{table2}
\end{table}

In summary, our numerical examples demonstrate that the predicted exponential convergence of our $hp$ scheme is achieved in practice.  
The $k$-explicit error bounds~\rf{GalerkinErrorEst_kexplicit}, \rf{GalerkinErrorDomain} and \rf{FarFieldErrorEst} predict at worst mild growth in errors as $k$ increases, which can be controlled by a logarithmic growth in the degrees of freedom $N$.  But our numerical results support the conjecture that this mild growth is pessimistic, and that for a fixed number of degrees of freedom the accuracy of our numerical approximation to the domain solution and the far-field pattern stays fixed or even improves as the wavenumber $k$ increases. 
We suspect that the apparent lack of sharpness in $k$-dependence of our error bounds is due at least in part to a lack of sharpness in $k$-dependence of the estimate in Lemma~\ref{lem:Mu} for $\uM$, of our best approximation estimate~(\ref{BestAppdudn}), and of the quasi-optimality estimate~\rf{eqn:quasi-opt}.

\section*{Acknowledgements}
We gratefully acknowledge the support of EPSRC grant EP/F067798/1, for all authors, and of EPSRC grant EP/K000012/1, for SL.  We thank the anonymous reviewers for their comments, and we thank Nick Biggs, Alexey Chernov, Peter Svensson and Ashley Twigger for helpful discussions.

\bibliographystyle{siam}
\bibliography{BEMbib_short}

\end{document}